\newtheorem{thm}{Theorem}[section]
\newtheorem{prop}[thm]{Proposition}
\newtheorem{lem}[thm]{Lemma}
\theoremstyle{definition}
\newtheorem{defn}[thm]{Definition}
\newtheorem{rem}[thm]{Remark}
\newtheorem{exmp}[thm]{Example}
\newtheorem{prob}[thm]{Problem}
\newtheorem{cons}[thm]{Construction}
\newtheorem*{sassu}{Standing Assumptions}
\renewcommand{\bar}[1]{\overline{#1}}
\newcommand{\set}[2]{\{\,{#1} \mid {#2} \,\}}
\newcommand{\field}[1]{\mathbb{#1}}
\newcommand{\Z}{\field{Z}}
\newcommand{\R}{\field{R}}
\newcommand{\PP}{\field{P}}
\newcommand{\EE}{\field{E}}
\newcommand{\NN}{\field{N}}
\newcommand{\JJ}{\field{J}}
\newcommand{\ZZ}{\field{Z}}
\DeclareMathOperator{\CAT}{CAT}
\DeclareMathOperator{\Stab}{Stab}
\DeclareMathOperator{\diam}{diam}
\newcommand{\showcomments}{yes}
\newsavebox{\commentbox}
\begin{document}

%\title[On boundaries of right-angled Coxeter groups]{On boundaries of relatively hyperbolic right-angled Coxeter groups}
\title[\tiny The relative hyperbolicity and manifold structure of certain RACGs]{\footnotesize On the relative hyperbolicity and manifold structure of certain right-angled Coxeter groups}

\author{Matthew Haulmark}
\address{Department of Mathematics\\
1326 Stevenson Center\\
Vanderbilt University\\
Nashville, TN 37240 USA}
\email{m.haulmark@vanderbilt.edu}

\author{Hoang Thanh Nguyen}
\address{Department of Mathematical Sciences\\
University of Wisconsin-Milwaukee\\
P.O. Box 413\\
Milwaukee, WI 53201\\
USA}
\email{nguyen36@uwm.edu}

\author{Hung Cong Tran}
\address{Department of Mathematics\\
The University of Georgia\\
1023 D. W. Brooks Drive\\
Athens, GA 30605\\
USA}
\email{hung.tran@uga.edu}

\date{\today}

\begin{abstract}
In this article, we study the manifold structure and the relatively hyperbolic structure of right-angled Coxeter groups with planar nerves. We then apply our results to the quasi-isometry problem for this class of right-angled Coxeter groups.
\end{abstract}

\subjclass[2000]{%
20F67, % Hyperbolic groups and nonpositively curved groups
20F65} % Geometric group theory
\maketitle

\section{Introduction}
For each finite simplicial graph $\Gamma$ the associated \emph{right-angled Coxeter group} $G_\Gamma$ has generating set $S$ equal to the vertices of $\Gamma$, relations $s^2=1$ for each $s$ in $S$ and relations $st = ts$ whenever $s$ and $t$ are adjacent vertices. The graph $\Gamma$ is the \emph{defining graph} of right-angled Coxeter group $G_\Gamma$ and its flag complex $\Delta=\Delta(\Gamma)$ is the \emph{defining nerve} of the group. Therefore, sometimes we also denote the right-angled Coxeter group $G_\Gamma$ by $G_\Delta$. In geometric group theory, groups acting on $\CAT(0)$ cube complexes are fundamental objects and right-angled Coxeter groups provide a rich source of such groups. The coarse geometry of right-angled Coxeter groups has been studied by Caprace \cite{MR2665193, MR3450952}, Dani-Thomas \cite{MR3314816, DATA}, Dani-Stark-Thomas \cite{DST}, Behrstock-Hagen-Sisto \cite{MR3623669}, Levcovitz \cite{IL} and others. In this paper, we will study the boundary of relatively hyperbolic right-angled Coxeter groups and the geometry of right-angled Coxeter groups with planar nerves.

In this paper, we study the geometry of right-angled Coxeter groups of planar defining nerves. We will focus our attention on one-ended non-hyperbolic groups. We will also exclude the virtually $\Z^2$ groups. Thus, we assume the nerves of our groups satisfy the following conditions.

\begin{sassu}The planar flag complex $\Delta\subset \field{S}^2$:
\begin{enumerate}
    \item is connected with no separating vertices and no separating edges ($G_\Delta$ is one-ended);
    \item contains at least one induced $4$-cycle ($G_\Delta$ is not hyperbolic);
    \item is not a $4$-cycle and not a cone of a $4$-cycle ($G_\Delta$ is not virtually $\Z^2$).
\end{enumerate}
\end{sassu}

\subsection{Manifold group structure and relatively hyperbolic structure}
Davis-Okun~\cite{MR1812434} and Droms~\cite{MR1974626} proved that the one-ended right-angled Coxeter group $G_\Delta$ is virtually the fundamental group of a $3$--manifold $M$ if the defining nerve $\Delta$ is planar. So, we can learn a lot about the geometry of $G_\Delta$ if we know the manifold type of $M$. This is the main motivation for proving the following:

\begin{thm}
\label{mani}
Let $\Delta\subset \field{S}^2$ be a flag complex satisfying Standing Assumptions. Then the right-angled Coxeter group $G_\Delta$ is virtually the fundamental group of a 3-manifold $M$ with empty or toroidal boundary if and only if $\Delta=\field{S}^2$ or the boundary of each region in $\field{S}^2-\Delta$ is a 4-cycle. Moreover, in the case $G_\Delta$ is virtually the fundamental group of a 3-manifold $M$ with empty or toroidal boundary, there are four mutually exclusive cases:
\begin{enumerate}
    %\item $M=\field{S}^1\times\field{S}^1\times\field{S}^1$ (This is not Seifert manifold so need to be careful when defining Seifert manifold) if $\Delta=\field{S}^0*\field{S}^0*\field{S}^0$;
    \item
    \label{item:seifert}If $\Delta$ is a suspension of some $n$-cycle ($n\geq 4$) or some broken line (i.e a finite disjoint union of vertices and finite trees with vertex degrees $1$ or $2$), then $M$ is a Seifert manifold;
    \item
    \label{item: graphmld} If the $1$-skeleton of $\Delta$ is $\mathcal{CFS}$ (see Definition~\ref{defn:CFS}) and does not satisfy (1), then $M$ is a graph manifold; %$M$ is a graph manifold if $\Delta$ is not a suspension of some $n$-cycle ($n\geq 4$) or some broken line but the $1$-skeleton of $\Delta$ is $\mathcal{CFS}$;
    \item
    \label{item:hyper}
    If the $1$-skeleton of $\Delta$ has no separating induced $4$-cycle and is not $\mathcal{CFS}$, then $M$ is a hyperbolic manifold with boundary; %$M$ is a mixed manifold if the $1$-skeleton of $\Delta$ is not $\mathcal{CFS}$ and $\Delta$ contains at least a separating induced $4$-cycle.
    \item 
    \label{item:mixed}
   If the 1-skeleton of $\Delta$ contains a separating induced $4$-cycle and is not $\mathcal{CFS}$, then $M$ is a mixed manifold.%$M$ is a hyperbolic manifold with boundary if the $1$-skeleton of $\Delta$ is not $\mathcal{CFS}$ and any induced $4$-cycle in $\Delta$ is non-separating.
\end{enumerate}

%{\color{red} H. Anh van thich dang sau hon. Cai ``none of the above'' hay ``one of the above'' rat mo ho
%\textcolor{purple}{Em thay no ko mo ho, vi cai do la ''hierachical diagram'' cua YES and NO.}}
%\begin{enumerate}
    %\item $M=\field{S}^1\times\field{S}^1\times\field{S}^1$ (This is not Seifert manifold so need to be careful when defining Seifert manifold) %if $\Delta=\field{S}^0*\field{S}^0*\field{S}^0$;
    %\item
    %\label{item:seifert} If $\Delta$ is a suspension of some $n$-cycle ($n\geq 4$) or some broken line, then $M$ is a Seifert manifold;
    %\item
    %\label{item: graphmld} If $\Delta$ is not a suspension of some $n$-cycle ($n\geq 4$) or some broken line but the $1$-skeleton of $\Delta$ is $\mathcal{CFS}$, then $M$ is a graph manifold;
    %\item
    %\label{item:mixed} If the $1$-skeleton of $\Delta$ is not $\mathcal{CFS}$ and $\Delta$ contains at least a separating induced $4$-cycle, then
    %\item 
    %\label{item:hyper}
    %If the $1$-skeleton of $\Delta$ is not $\mathcal{CFS}$ and any induced $4$-cycle in $\Delta$ is non-separating, then $M$ is a hyperbolic manifold with boundary. 
%\end{enumerate}
\end{thm}

 In Theorem~\ref{mani} we characterize the associated manifold $M$ of $G_\Delta$ with empty or toroidal boundary using the work in \cite{KSD} on the Euler characteristic of torsion free finite index subgroups of right-angled Coxeter groups. For further classification of such a manifold $M$ we investigate the relatively hyperbolic structure of $G_\Delta$. As a consequence, we obtain a key result on all possible divergence of $G_\Delta$.  

\begin{thm}
\label{sosonice}
Let $\Gamma$ be a graph whose flag complex $\Delta$ is planar. There is a collection $\JJ$ of $\mathcal{CFS}$ subgraphs of $\Gamma$ such that the right-angled Coxeter group $G_\Gamma$ is relatively hyperbolic with respect to the collection $\PP=\set{G_J}{J\in \JJ}$. In particular, if $G_\Gamma$ is one-ended, then the divergence of $G_\Gamma$ is linear, quadratic, or exponential.
\end{thm}

%\begin{cor}
%\label{cor:divofracg}
%Let $\Gamma$ be a graph whose flag complex is planar, connected with no separating vertex and no separating edge. Then the divergence of the right-angled Coxeter group $G_\Gamma$ is linear, or quadratic, or exponential.
%\end{cor}
For the proof of Theorem~\ref{sosonice} we carefully investigate the tree structure of the defining nerve $\Delta$ and then follow an almost identical strategy to the proof of Theorem 1.6 in \cite{NT}. The result concerning the divergence of the group follows directly from \cite[Theorem 7.4] {IL} and \cite[Theorem 1.3]{Sisto}.

We also characterize right-angled Coxeter groups with planar nerves such that they have non-trivial minimal peripheral structures with the Sierpinski carpet as their Bowditch boundaries. This result will contribute to the study of the coarse geometry of our groups and we will discuss it in the next section. 

\begin{thm}
\label{SC-carpet}
Let $\Delta\subset \field{S}^2$ be a planar flag complex satisfying Standing Assumptions. Then $G_\Delta$ has a non-trivial minimal peripheral structure with Bowditch boundary the Sierpinski carpet if and only if all following conditions hold:
\begin{enumerate}
    \item The boundary of some region of $\field{S}^2-\Delta$ is an $n$-cycle with $n\geq 5$;
    \item The $1$-skeleton of $\Delta$ has no separating induced $4$-cycle, no cut pair, and no separating induced path of length $2$.
\end{enumerate}
%Moreover, if $\Delta$ satisfies Condition (4) but does not satisfy one of Conditions (1)-(3), then the Bowditch boundary with respect to the minimal peripheral structure has either a cut point or a cut pair.
\end{thm}

\subsection{Coarse geometry}

Theorem~\ref{mani} divides right-angled Coxeter groups with nerves satisfying Standing Assumptions into two main types: the ones which are virtually the fundamental group of a $3$--manifold with empty or toroidal boundary which we call \emph{type A}, and the ones which are not virtually the fundamental group of such a $3$--manifold which we will call \emph{type B}. Behrstock and Neumann summarize the rigidity results of $3$--manifolds with empty or toroidal boundary from many authors (see \cite{MR623534}, \cite{MR1036000}, \cite{MR2402598}, \cite{MR1440310}, \cite{MR1840770}, \cite{MR1383215}, and \cite{MR1898396}) in Theorem A of ~\cite{MR2376814}. Thus, a right-angled Coxeter group of type A and a right-angled Coxeter group of type B are never quasi-isometric by Theorem A of \cite{MR2376814}.

Theorem~\ref{mani} further divides right-angled Coxeter groups of type A into $4$ subtypes (see Figure~\ref{A}). It is well-known that the fundamental groups of the associated manifolds in the four different subtypes are not quasi-isometric (see Theorem~5.4 in \cite{KaLeeb95}), so 
% the fundamental groups of two different types of manifolds associated to right-angled Coxeter groups of type A never be quasi-isometric (see Theorem~5.4 in \cite{KaLeeb95}). 
two right-angled Coxeter groups of type A are not quasi-isometric if they are of different subtypes. Thus, we must study each subtype to understand the quasi-isometry classification of groups of type A.
%\begin{center}
%\begin{tabular}{|m{1cm}|m{1cm}|m{1cm}|m{1cm}|}\hline
 %    & {$G_{\Gamma_1}$}& {\centering $G_{\Gamma_2}$}& {\centering $G_{\Gamma_3}$} \\ \hline
  %   Strongly $\mc{CFS}$& Yes & Yes & No\\ \hline
   %  Non-coarsely \newline covering \newline Quasiconvex \newline Subsets  & All quasi-trees & All hyperbolic. \newline Contains a one-ended  stable \newline subgroup. &    Contains a \newline quasiconvex \newline virtually $\mathbb{Z}^2$ \newline subgroup. \\ \hline
    % Morse \newline Boundary & Totally \newline disconnected & Contains a circle & Connectivity \newline unknown\\ \hline
     %QI to a RAAG & Yes  & No & No\\ \hline
%\end{tabular}
%\end{center}

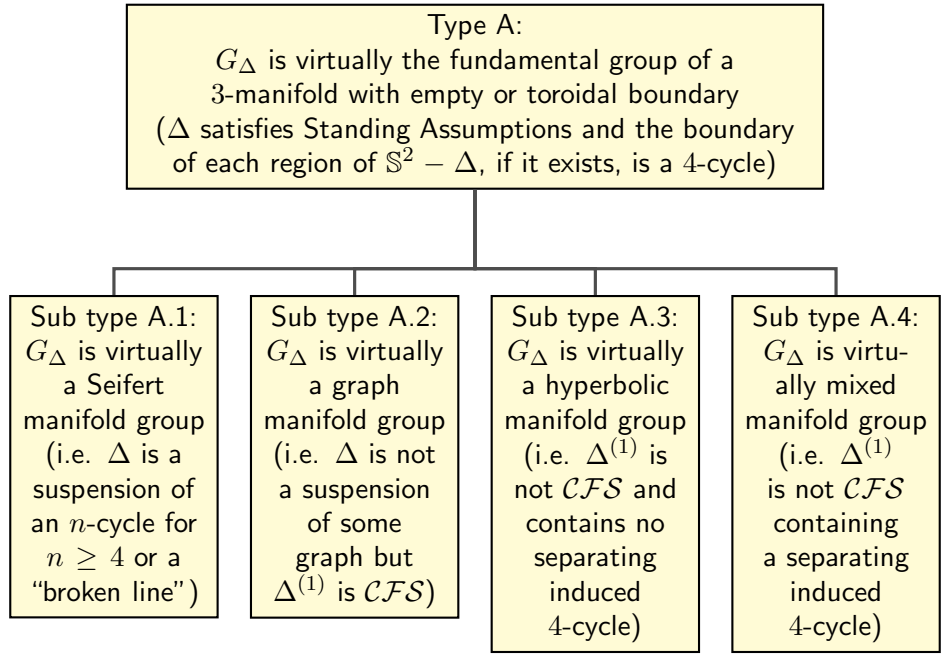
\begin{figure}
\centering
\begin{tikzpicture}[
% Gates and symbols style
    and/.style={and gate US,thick,draw,fill=red!60,rotate=90,
		anchor=east,xshift=-1mm},
    or/.style={or gate US,thick,draw,fill=blue!60,rotate=90,
		anchor=east,xshift=-1mm},
    be/.style={circle,thick,draw,fill=green!60,anchor=north,
		minimum width=0.7cm},
    tr/.style={buffer gate US,thick,draw,fill=purple!60,rotate=90,
		anchor=east,minimum width=0.8cm},
% Label style
    label distance=3mm,
    every label/.style={blue},
% Event style
    event/.style={rectangle,thick,draw,fill=yellow!20,text width=9cm,
		text centered,font=\sffamily,anchor=north},
	event1/.style={rectangle,thick,draw,fill=yellow!20,text width=2.5cm,
		text centered,font=\sffamily,anchor=north},	
    event2/.style={rectangle,thick,draw,fill=yellow!20,text width=1.5cm,
		text centered,font=\sffamily,anchor=north},	
% Children and edges style
    edge from parent/.style={very thick,draw=black!70},
    edge from parent path={(\tikzparentnode.south) -- ++(0,-1.05cm)
			-| (\tikzchildnode.north)},
    level 1/.style={sibling distance=3.2cm,level distance=1.4cm,
			growth parent anchor=south,nodes=event},
    level 2/.style={sibling distance=2.5cm},
    level 3/.style={sibling distance=6cm},
    level 4/.style={sibling distance=3cm}
%%  For compatability with PGF CVS add the absolute option:
%   absolute
    ]
%% Draw events and edges
    \node (g1)[event] {Type A:\\$G_\Delta$ is virtually the fundamental group of a $3$-manifold with empty or toroidal boundary \\($\Delta$ satisfies Standing Assumptions and the boundary of each region of $\field{S}^2-\Delta$, if it exists, is a $4$-cycle)}
	    %child{
	     %\node (g2) {No flow from Component B}   
	     	child {node (g3)[event1] {Sub type A.1:\\$G_\Delta$ is virtually a Seifert manifold group\\(i.e. $\Delta$ is a suspension of an $n$-cycle for $n\geq 4$ or a ``broken line'')}}
	     	child {node (g3)[event1] {Sub type A.2:\\ $G_\Delta$ is virtually a graph manifold group\\(i.e. $\Delta$ is not a suspension of some graph but $\Delta^{(1)}$ is $\mathcal{CFS}$)}}
	     	child {node (g3)[event1] {Sub type A.3:\\$G_\Delta$ is virtually a hyperbolic manifold group\\(i.e. $\Delta^{(1)}$ is not $\mathcal{CFS}$ and contains no separating induced $4$-cycle)}}
	     	child {node (b1)[event1] {Sub type A.4:\\$G_\Delta$ is virtually a mixed manifold group\\(i.e. $\Delta^{(1)}$ is not $\mathcal{CFS}$ containing a separating induced $4$-cycle)}
	     			};
	\end{tikzpicture}
	\caption{There are four subtypes of type A, and groups of different subtypes are not quasi-isometric.}
	\label{A}
\end{figure}

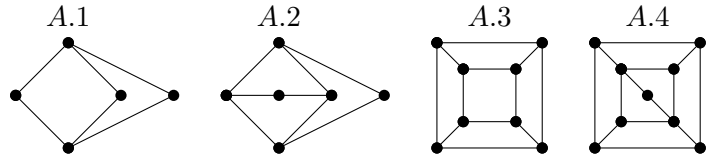
\begin{figure}
\begin{tikzpicture}[scale=0.7]

\draw (0,0) node[circle,fill,inner sep=1.5pt, color=black](1){} -- (1,1) node[circle,fill,inner sep=1.5pt, color=black](1){}-- (2,0) node[circle,fill,inner sep=1.5pt, color=black](1){}-- (1,-1) node[circle,fill,inner sep=1.5pt, color=black](1){} -- (0,0) node[circle,fill,inner sep=1.5pt, color=black](1){}; \draw (1,1) node[circle,fill,inner sep=1.5pt, color=black](1){} -- (3,0) node[circle,fill,inner sep=1.5pt, color=black](1){}-- (1,-1) node[circle,fill,inner sep=1.5pt, color=black](1){};
\node at (1,1.5) {$A.1$};

\draw (0+4,0) node[circle,fill,inner sep=1.5pt, color=black](1){} -- (1+4,1) node[circle,fill,inner sep=1.5pt, color=black](1){}-- (2+4,0) node[circle,fill,inner sep=1.5pt, color=black](1){}-- (1+4,-1) node[circle,fill,inner sep=1.5pt, color=black](1){} -- (0+4,0) node[circle,fill,inner sep=1.5pt, color=black](1){}; \draw (1+4,1) node[circle,fill,inner sep=1.5pt, color=black](1){} -- (3+4,0) node[circle,fill,inner sep=1.5pt, color=black](1){}-- (1+4,-1) node[circle,fill,inner sep=1.5pt, color=black](1){};\draw (0+4,0) node[circle,fill,inner sep=1.5pt, color=black](1){} -- (1+4,0) node[circle,fill,inner sep=1.5pt, color=black](1){}-- (2+4,0) node[circle,fill,inner sep=1.5pt, color=black](1){}; 
\node at (1+4,1.5) {$A.2$};

\draw (8,1) node[circle,fill,inner sep=1.5pt, color=black](1){} -- (10,1) node[circle,fill,inner sep=1.5pt, color=black](1){}-- (10,-1) node[circle,fill,inner sep=1.5pt, color=black](1){}-- (8,-1) node[circle,fill,inner sep=1.5pt, color=black](1){} -- (8,1) node[circle,fill,inner sep=1.5pt, color=black](1){}; \draw (8.5,0.5) node[circle,fill,inner sep=1.5pt, color=black](1){} -- (9.5,0.5) node[circle,fill,inner sep=1.5pt, color=black](1){}-- (9.5,-0.5) node[circle,fill,inner sep=1.5pt, color=black](1){}-- (8.5,-0.5) node[circle,fill,inner sep=1.5pt, color=black](1){} -- (8.5,0.5) node[circle,fill,inner sep=1.5pt, color=black](1){}; \draw (8,1) node[circle,fill,inner sep=1.5pt, color=black](1){} -- (8.5,0.5) node[circle,fill,inner sep=1.5pt, color=black](1){};\draw (10,1) node[circle,fill,inner sep=1.5pt, color=black](1){} -- (9.5,0.5) node[circle,fill,inner sep=1.5pt, color=black](1){};\draw (10,-1) node[circle,fill,inner sep=1.5pt, color=black](1){} -- (9.5,-0.5) node[circle,fill,inner sep=1.5pt, color=black](1){};\draw (8,-1) node[circle,fill,inner sep=1.5pt, color=black](1){} -- (8.5,-0.5) node[circle,fill,inner sep=1.5pt, color=black](1){};
\node at (9,1.5) {$A.3$};

\draw (8+3,1) node[circle,fill,inner sep=1.5pt, color=black](1){} -- (10+3,1) node[circle,fill,inner sep=1.5pt, color=black](1){}-- (10+3,-1) node[circle,fill,inner sep=1.5pt, color=black](1){}-- (8+3,-1) node[circle,fill,inner sep=1.5pt, color=black](1){} -- (8+3,1) node[circle,fill,inner sep=1.5pt, color=black](1){}; \draw (8.5+3,0.5) node[circle,fill,inner sep=1.5pt, color=black](1){} -- (9.5+3,0.5) node[circle,fill,inner sep=1.5pt, color=black](1){}-- (9.5+3,-0.5) node[circle,fill,inner sep=1.5pt, color=black](1){}-- (8.5+3,-0.5) node[circle,fill,inner sep=1.5pt, color=black](1){} -- (8.5+3,0.5) node[circle,fill,inner sep=1.5pt, color=black](1){}; \draw (8+3,1) node[circle,fill,inner sep=1.5pt, color=black](1){} -- (8.5+3,0.5) node[circle,fill,inner sep=1.5pt, color=black](1){};\draw (10+3,1) node[circle,fill,inner sep=1.5pt, color=black](1){} -- (9.5+3,0.5) node[circle,fill,inner sep=1.5pt, color=black](1){};\draw (10+3,-1) node[circle,fill,inner sep=1.5pt, color=black](1){} -- (9.5+3,-0.5) node[circle,fill,inner sep=1.5pt, color=black](1){};\draw (8+3,-1) node[circle,fill,inner sep=1.5pt, color=black](1){} -- (8.5+3,-0.5) node[circle,fill,inner sep=1.5pt, color=black](1){}; \draw (8.5+3,0.5) node[circle,fill,inner sep=1.5pt, color=black](1){} -- (9+3,0) node[circle,fill,inner sep=1.5pt, color=black](1){}-- (9.5+3,-0.5) node[circle,fill,inner sep=1.5pt, color=black](1){};
\node at (9+3,1.5) {$A.4$};

\end{tikzpicture}

\caption{Examples of type $A$ nerves}
\label{A'}
\end{figure}

%It is not hard to see the collection of groups of subtype A.1 contains exactly three quasi-isometry equivalence classes.
The collection of groups of subtype A.1 contains exactly three quasi-isometry equivalence classes. The first quasi-isometry equivalence class of groups of type A.1 contains only one right-angled Coxeter group with nerve a suspension of a $4$--cycle. This right-angled Coxeter group is virtually $\Z^3$. The second quasi-isometry equivalence class consists of right-angled Coxeter groups whose nerve is a suspension of some $n$--cycle ($n\geq 5$). Each right-angled Coxeter groups in this second class is virtually the fundamental group of a closed Seifert manifold. The last quasi-isometry equivalence class consists of right-angled Coxeter groups with nerve a suspension of some broken line. Each right-angled Coxeter group in this class is virtually the fundamental group of a Seifert manifold with non-empty boundary.

The quasi-isometry classification of groups of subtype A.2 is much more complicated. We give a complete quasi-isometry classification of groups of subtype $A.2$. 

\begin{thm}
\label{thm:graphmanifold}
Let $\Delta\subset \field{S}^2$ and $\Delta'\subset \field{S}^2$ be two planar flag complexes satisfying Standing Assumptions such that their $1$--skeletons are $\mathcal{CFS}$ and not join graphs. Let $T_r$ and $T_r'$ be the visual decomposition trees of $\Delta$ and $\Delta'$ respectively. Then two groups $G_{\Delta}$ and $G_{\Delta'}$ are quasi-isometric if and only if $T_r$ and $T_r'$ are bisimilar.
\end{thm}

The above theorem strengthens Theorem 1.1 of \cite{NT} by removing the condition ``triangle free'' from the hypothesis of the theorem. We remark that the visual decomposition trees of the defining nerves in Theorems~\ref{thm:graphmanifold} are colored trees whose vertices are colored black and white, and they are constructed in Sections~\ref{tree1} and \ref{tree2}. We also refer the reader to Section~\ref{bisimilarity} for the notion of bisimilarity among two-colored graphs.

We also classify groups of subtype A.2 which are quasi-isometric to right-angled Artin groups. The following theorem is an extension of Theorem 1.2 of \cite{NT}.

\begin{thm}
\label{thm:racgraag}
Let $\Delta\subset \field{S}^2$ be a planar flag complex satisfying Standing Assumptions with the $1$--skeleton a non-join $\mathcal{CFS}$. Let $T_r$ be a visual decomposition tree of $\Delta$. Then the following are equivalent:
\begin{enumerate}
\item The right-angled Coxeter group $G_\Delta$ is quasi-isometric to a right-angled Artin group.
\item The right-angled Coxeter group $G_\Delta$ is quasi-isometric to the right-angled Artin group of a tree of diameter at least 3.
\item The right-angled Coxeter group $G_\Delta$ is quasi-isometric to the right-angled Artin group of a tree of diameter exactly 3.
\item All vertices of the tree $T_r$ are black.
\end{enumerate}
\end{thm}

All right-angled Coxeter groups of subtype A.3 are virtually fundamental groups of hyperbolic $3$--manifolds. So far, the authors do not know a complete quasi-isometric classification for the groups in this subtype. By Schwartz Rigidity Theorem \cite{Schwartz95} (see also, for example, Theorem~24.1 \cite{DK18}) the fundamental groups of two hyperbolic $3$--manifolds are quasi-isometric if and only if they are commensurable. Therefore, we turn the quasi-isometry classification problem for right-angled Coxeter groups of type A.3 into a commensurability problem.

\begin{prob}
Classify all right-angled Coxeter groups which are virtually the fundamental group of a hyperbolic $3$--manifold up to commensurability. 
\end{prob}

All right-angled Coxeter groups of subtype A.4 are virtually the fundamental groups of mixed $3$--manifolds. Therefore, the last conclusion of Theorem~\ref{mani} potentially allows us to use the work of Kapovich-Leeb~\cite{MR1440310,KL98} to understand the geometry of right-angled Coxeter groups of subtype A.4. The complete quasi-isometry classification of fundamental groups of mixed manifolds remains an open question. As a consequence, the authors do not know the complete quasi-isometry classification of groups of subtype A.4.

We note that all right-angled Coxeter groups of type B are non-trivially relatively hyperbolic (see Proposition~\ref{pppp1}). So, we divide these groups into three different subtypes based on their Bowditch boundary with respect to the minimal peripheral structure and whether they split over $2$--ended subgroups (see Figure~\ref{B}).

\begin{figure}
\centering

\begin{tikzpicture}[
% Gates and symbols style
    and/.style={and gate US,thick,draw,fill=red!60,rotate=90,
		anchor=east,xshift=-1mm},
    or/.style={or gate US,thick,draw,fill=blue!60,rotate=90,
		anchor=east,xshift=-1mm},
    be/.style={circle,thick,draw,fill=green!60,anchor=north,
		minimum width=0.7cm},
    tr/.style={buffer gate US,thick,draw,fill=purple!60,rotate=90,
		anchor=east,minimum width=0.8cm},
% Label style
    label distance=3mm,
    every label/.style={blue},
% Event style
    event/.style={rectangle,thick,draw,fill=yellow!20,text width=10cm,
		text centered,font=\sffamily,anchor=north},
	event1/.style={rectangle,thick,draw,fill=yellow!20,text width=3cm,
		text centered,font=\sffamily,anchor=north},	
    event2/.style={rectangle,thick,draw,fill=yellow!20,text width=1.5cm,
		text centered,font=\sffamily,anchor=north},	
% Children and edges style
    edge from parent/.style={very thick,draw=black!70},
    edge from parent path={(\tikzparentnode.south) -- ++(0,-1.05cm)
			-| (\tikzchildnode.north)},
    level 1/.style={sibling distance=4cm,level distance=1.4cm,
			growth parent anchor=south,nodes=event},
    level 2/.style={sibling distance=2.5cm},
    level 3/.style={sibling distance=6cm},
    level 4/.style={sibling distance=3cm}
%%  For compatability with PGF CVS add the absolute option:
%   absolute
    ]
%% Draw events and edges
    \node (g1)[event] {Type B:\\$G_\Delta$ is not virtually the fundamental group of a $3$-manifold with empty or toroidal boundary \\(i.e. $\Delta$ satisfies Standing Assumptions and the boundary of some region of $\field{S}^2-\Delta$ is not a $4$-cycle)}
	    %child{
	     %\node (g2) {No flow from Component B}   
	     	child {node (g3)[event1] {Sub type B.1:\\$G_\Delta$ does not split over virtually $\ZZ$ or $\ZZ^2$ groups (i.e. $\Delta^{(1)}$ has no cut pair, no separating induced path of lengh $2$, and no separating induced $4$-cycle)}}
	     	child {node (g3)[event1] {Sub type B.2:\\$G_\Delta$ splits over a virtually $\ZZ$ subgroup (i.e. $\Delta^{(1)}$ has a cut pair or a separating induced path of lengh $2$)}}
	     	child {node (g3)[event1] {Sub type B.3:\\$G_\Delta$ splits over a virtually $\ZZ^2$ subgroup, but does not split over a virtually $\ZZ$ subgroup (i.e. $\Delta^{(1)}$ has a separating $4$-cycle but no cut pair and no separating induced path of length $2$)}};
	\end{tikzpicture}
	\caption{Three different subtypes of type~B. Groups in the different subtypes B.1, B.2, and B.3 are not quasi-isometric.}
	\label{B}
\end{figure}
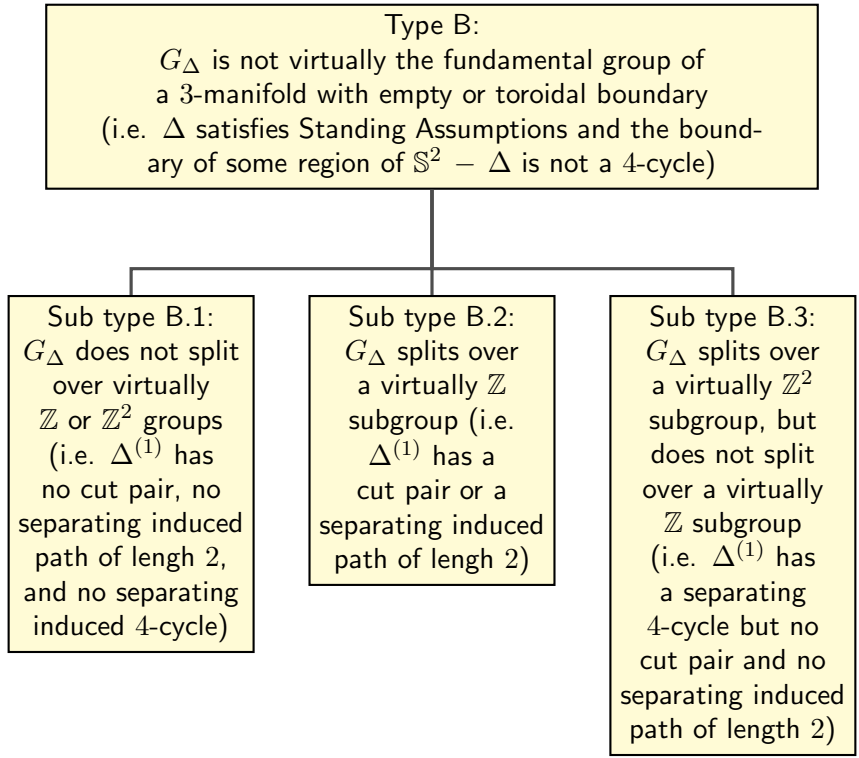

Right-angled Coxeter groups of subtype B.1 all have minimal peripheral structure whose Bowditch boundary is a Sierpinski carpet (see Theorem~\ref{SC-carpet}). Meanwhile, the Bowditch boundary of right-angled Coxeter groups of subtypes B.2 and B.3 with respect to a minimal peripheral structure must have a cut point or a non-parabolic cut pair by Theorems~\ref{intro2} and \ref{intro3}. Therefore, a right-angled Coxeter group of subtype B.1 is not quasi-isometric to one of subtype B.2 and subtype B.3. 

A right-angled Coxeter group of subtype B.2 always splits over a $2$--ended subgroup while a right-angled Coxeter group of subtype B.3 does not. Therefore, they are never quasi-isometric by the work of Papasoglu \cite{MR2153400}. (Papasoglu showed that among $1$-ended finitely presented groups that are not commensurable to surface groups, having a splitting over a 2-ended subgroup is a quasi-isometry invariant.) We hope to use the work of Cashen-Martin~\cite{MR3604915} to understand the coarse geometry of right-angled Coxeter groups of subtype B.2. 

It seems hard to obtain the complete quasi-isometry classification of groups of type B. However, the quasi-isometry classification of the ones of subtype B.1 seems reasonable. 
\begin{prob}
Classify up to quasi-isometry all relatively hyperbolic right-angled Coxeter groups whose Bowditch boundary is a Sierpinski carpet with respect to some minimal peripheral structure.
\end{prob}

\subsection{Outline of the paper}

In Section \ref{prelim}, we review some concepts in geometric group theory: $\CAT(0)$ spaces, $\delta$--hyperbolic spaces, $\CAT(0)$ spaces with isolated flats, $3$--manifold groups, relatively hyperbolic groups, $\CAT(0)$ boundaries, Gromov boundaries, Bowditch boundaries, and peripheral splitting of relatively hyperbolic groups. We also recall some necessary results related to these concepts. In Section \ref{visual splittings and Bowditch boundary}, we give descriptions of cut points and non-parabolic cut pairs of Bowditch boundaries of relatively hyperbolic right-angled Coxeter groups (see Theorems \ref{intro2} and \ref{intro3}). In Section \ref{nice3}, we study the coarse geometry of right-angled Coxeter groups with planar defining nerves. We first analyze the tree structure of planar flag complexes (see Subsection~\ref{tree1}). Then we use this structure to study the relatively hyperbolic structure, group divergence, and manifold structure of right-angled Coxeter groups with planar nerves. We give the proofs of Theorem~\ref{mani}, Theorem~\ref{sosonice}, and Theorem~\ref{SC-carpet} in Subsection \ref{rhsms}. We improve the tree structure of planar flag complexes with $1$--skeletons $\mathcal{CSF}$ graphs and give a complete quasi-isometry classification of right-angled Coxeter groups which are virtually graph manifold groups in Subsection~\ref{tree2}. The proof of Theorem~\ref{thm:graphmanifold} is also given in this subsection.
	
\subsection*{Acknowledgments}
All three authors would like to thank Chris Hruska for suggestions and insights. The authors are also grateful for the insightful comments of Jason Behrstock, Ivan Levcovitz, and Mike Mihalik that have helped improve the exposition of this paper. The first author would like to thank Genevieve Walsh for helpful conversations concerning to this paper. Lastly, all three authors are particularly grateful for the referee's helpful comments and suggestions. %The authors want to thank Jason Behrstock for a correction to Corollary \ref{behr}. 

%The authors are thankful for insightful discussions with Prof.~Christopher Hruska.

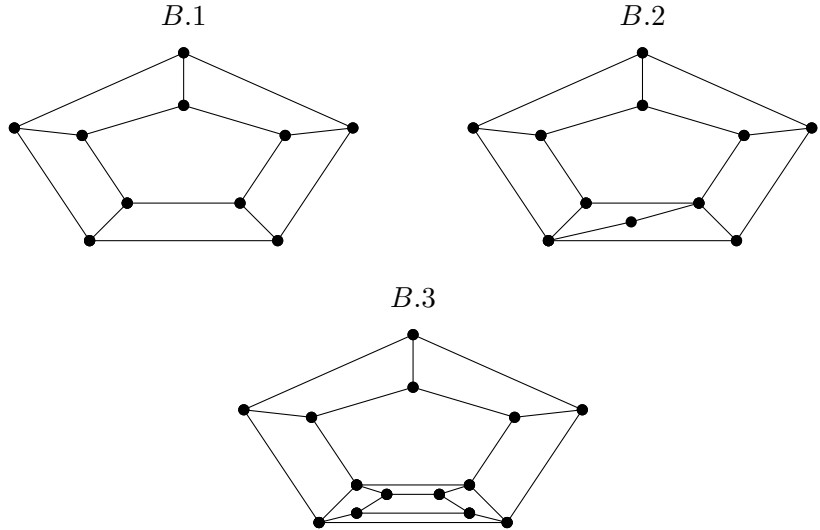
\begin{figure}
\begin{tikzpicture}[scale=0.5]

\draw (4,0) node[circle,fill,inner sep=1.5pt, color=black](1){} -- (2,3) node[circle,fill,inner sep=1.5pt, color=black](1){} -- (6.5,5) node[circle,fill,inner sep=1.5pt, color=black](1){}-- (11,3) node[circle,fill,inner sep=1.5pt, color=black](1){} -- (9,0) node[circle,fill,inner sep=1.5pt, color=black](1){} -- (4,0) node[circle,fill,inner sep=1.5pt, color=black](1){};

\draw (5,1) node[circle,fill,inner sep=1.5pt, color=black](1){} -- (3.8,2.8) node[circle,fill,inner sep=1.5pt, color=black](1){} -- (6.5,3.6) node[circle,fill,inner sep=1.5pt, color=black](1){}-- (9.2,2.8) node[circle,fill,inner sep=1.5pt, color=black](1){} -- (8,1) node[circle,fill,inner sep=1.5pt, color=black](1){} -- (5,1) node[circle,fill,inner sep=1.5pt, color=black](1){};

\draw (4,0) node[circle,fill,inner sep=1.5pt, color=black](1){} -- (5,1) node[circle,fill,inner sep=1.5pt, color=black](1){};
\draw (2,3) node[circle,fill,inner sep=1.5pt, color=black](1){} -- (3.8,2.8) node[circle,fill,inner sep=1.5pt, color=black](1){};
\draw (6.5,5) node[circle,fill,inner sep=1.5pt, color=black](1){} -- (6.5,3.6) node[circle,fill,inner sep=1.5pt, color=black](1){};
\draw (11,3) node[circle,fill,inner sep=1.5pt, color=black](1){} --  (9.2,2.8) node[circle,fill,inner sep=1.5pt, color=black](1){};
\draw (9,0) node[circle,fill,inner sep=1.5pt, color=black](1){} --  (8,1) node[circle,fill,inner sep=1.5pt, color=black](1){};

\node at (6.5,6.0) {$B.1$};

\draw (16.2,0) node[circle,fill,inner sep=1.5pt, color=black](1){} -- (14.2,3) node[circle,fill,inner sep=1.5pt, color=black](1){} -- (18.7,5) node[circle,fill,inner sep=1.5pt, color=black](1){}-- (23.2,3) node[circle,fill,inner sep=1.5pt, color=black](1){} -- (21.2,0) node[circle,fill,inner sep=1.5pt, color=black](1){} -- (16.2,0) node[circle,fill,inner sep=1.5pt, color=black](1){};

\draw (17.2,1) node[circle,fill,inner sep=1.5pt, color=black](1){} -- (16,2.8) node[circle,fill,inner sep=1.5pt, color=black](1){} -- (18.7,3.6) node[circle,fill,inner sep=1.5pt, color=black](1){}-- (21.4,2.8) node[circle,fill,inner sep=1.5pt, color=black](1){} -- (20.2,1) node[circle,fill,inner sep=1.5pt, color=black](1){} -- (17.2,1) node[circle,fill,inner sep=1.5pt, color=black](1){};

\draw (16.2,0) node[circle,fill,inner sep=1.5pt, color=black](1){} --  (17.2,1) node[circle,fill,inner sep=1.5pt, color=black](1){};
\draw (14.2,3) node[circle,fill,inner sep=1.5pt, color=black](1){} --  (16,2.8) node[circle,fill,inner sep=1.5pt, color=black](1){};
\draw (18.7,5) node[circle,fill,inner sep=1.5pt, color=black](1){} --  (18.7,3.6) node[circle,fill,inner sep=1.5pt, color=black](1){};
\draw (23.2,3) node[circle,fill,inner sep=1.5pt, color=black](1){} --  (21.4,2.8) node[circle,fill,inner sep=1.5pt, color=black](1){};
\draw (21.2,0) node[circle,fill,inner sep=1.5pt, color=black](1){} --  (20.2,1) node[circle,fill,inner sep=1.5pt, color=black](1){};
\draw (16.2,0) node[circle,fill,inner sep=1.5pt, color=black](1){} --  (18.4,0.5) node[circle,fill,inner sep=1.5pt, color=black](1){}--(20.2,1) node[circle,fill,inner sep=1.5pt, color=black](1){};

%\draw (14.2,3) node[circle,fill,inner sep=1.5pt, color=black](1){} -- (14.2,6.5) node[circle,fill,inner sep=1.5pt, color=black](1){} -- (23.2,6.5) node[circle,fill,inner sep=1.5pt, color=black](1){}-- (23.2,3) node[circle,fill,inner sep=1.5pt, color=black](1){};

\node at (18.7,6.0) {$B.2$};

\draw (16.2+12.2-18.3,0-7.5) node[circle,fill,inner sep=1.5pt, color=black](1){} -- (14.2+12.2-18.3,3-7.5) node[circle,fill,inner sep=1.5pt, color=black](1){} -- (18.7+12.2-18.3,5-7.5) node[circle,fill,inner sep=1.5pt, color=black](1){}-- (23.2+12.2-18.3,3-7.5) node[circle,fill,inner sep=1.5pt, color=black](1){} -- (21.2+12.2-18.3,0-7.5) node[circle,fill,inner sep=1.5pt, color=black](1){} -- (16.2+12.2-18.3,0-7.5) node[circle,fill,inner sep=1.5pt, color=black](1){};

\draw (17.2+12.2-18.3,1-7.5) node[circle,fill,inner sep=1.5pt, color=black](1){} -- (16+12.2-18.3,2.8-7.5) node[circle,fill,inner sep=1.5pt, color=black](1){} -- (18.7+12.2-18.3,3.6-7.5) node[circle,fill,inner sep=1.5pt, color=black](1){}-- (21.4+12.2-18.3,2.8-7.5) node[circle,fill,inner sep=1.5pt, color=black](1){} -- (20.2+12.2-18.3,1-7.5) node[circle,fill,inner sep=1.5pt, color=black](1){} -- (17.2+12.2-18.3,1-7.5) node[circle,fill,inner sep=1.5pt, color=black](1){};

\draw (16.2+12.2-18.3,0-7.5) node[circle,fill,inner sep=1.5pt, color=black](1){} --  (17.2+12.2-18.3,1-7.5) node[circle,fill,inner sep=1.5pt, color=black](1){};
\draw (14.2+12.2-18.3,3-7.5) node[circle,fill,inner sep=1.5pt, color=black](1){} --  (16+12.2-18.3,2.8-7.5) node[circle,fill,inner sep=1.5pt, color=black](1){};
\draw (18.7+12.2-18.3,5-7.5) node[circle,fill,inner sep=1.5pt, color=black](1){} --  (18.7+12.2-18.3,3.6-7.5) node[circle,fill,inner sep=1.5pt, color=black](1){};
\draw (23.2+12.2-18.3,3-7.5) node[circle,fill,inner sep=1.5pt, color=black](1){} --  (21.4+12.2-18.3,2.8-7.5) node[circle,fill,inner sep=1.5pt, color=black](1){};
\draw (21.2+12.2-18.3,0-7.5) node[circle,fill,inner sep=1.5pt, color=black](1){} --  (20.2+12.2-18.3,1-7.5) node[circle,fill,inner sep=1.5pt, color=black](1){};

\draw (20.2+12.2-18.3,0+0.25-7.5) node[circle,fill,inner sep=1.5pt, color=black](1){} -- (17.2+12.2-18.3,0+0.25-7.5) node[circle,fill,inner sep=1.5pt, color=black](1){};
\draw (19.4+12.2-18.3,0+0.75-7.5) node[circle,fill,inner sep=1.5pt, color=black](1){} -- (18.0+12.2-18.3,0+0.75-7.5) node[circle,fill,inner sep=1.5pt, color=black](1){};
\draw (20.2+12.2-18.3,0+0.25-7.5) node[circle,fill,inner sep=1.5pt, color=black](1){} -- (19.4+12.2-18.3,0+0.75-7.5) node[circle,fill,inner sep=1.5pt, color=black](1){};
\draw (17.2+12.2-18.3,0+0.25-7.5) node[circle,fill,inner sep=1.5pt, color=black](1){} -- (18.0+12.2-18.3,0+0.75-7.5) node[circle,fill,inner sep=1.5pt, color=black](1){};
\draw (20.2+12.2-18.3,0+0.25-7.5) node[circle,fill,inner sep=1.5pt, color=black](1){} -- (21.2+12.2-18.3,0-7.5) node[circle,fill,inner sep=1.5pt, color=black](1){};
\draw (17.2+12.2-18.3,0+0.25-7.5) node[circle,fill,inner sep=1.5pt, color=black](1){} -- (16.2+12.2-18.3,0-7.5) node[circle,fill,inner sep=1.5pt, color=black](1){};
\draw (19.4+12.2-18.3,0+0.75-7.5) node[circle,fill,inner sep=1.5pt, color=black](1){} -- (20.2+12.2-18.3,1-7.5) node[circle,fill,inner sep=1.5pt, color=black](1){};
\draw (18.0+12.2-18.3,0+0.75-7.5) node[circle,fill,inner sep=1.5pt, color=black](1){} -- (17.2+12.2-18.3,1-7.5) node[circle,fill,inner sep=1.5pt, color=black](1){};

%\draw (16.2+12.2,0) node[circle,fill,inner sep=1.5pt, color=black](1){} --  (18.4+12.2,0.5) node[circle,fill,inner sep=1.5pt, color=black](1){}--(20.2+12.2,1) node[circle,fill,inner sep=1.5pt, color=black](1){};

%\draw (14.2,3) node[circle,fill,inner sep=1.5pt, color=black](1){} -- (14.2,6.5) node[circle,fill,inner sep=1.5pt, color=black](1){} -- (23.2,6.5) node[circle,fill,inner sep=1.5pt, color=black](1){}-- (23.2,3) node[circle,fill,inner sep=1.5pt, color=black](1){};

\node at (18.7+12.2-18.3,6.0-7.5) {$B.3$};
%\node at (13.7,3) {$u$};
%\node at (23.7,3) {$v$};

\end{tikzpicture}

\caption{Examples of type $B$ nerves}
\label{B'}
\end{figure}

\section{Preliminaries}
\label{prelim}

%\begin{defn} For metric spaces $(X,d_X)$ and $(Y,d_Y)$ be metric spaces and constants $K \geq 1$ and $L \geq 0$, a map $f\!:X \to Y$ is a \emph{$(K, L)$--quasi-isometric embedding} if for all $x_1, x_2 \in X$,
%\[\frac{1}{K}d_X(x_1, x_2)-L\leq d_Y \bigl(f(x_1), f(x_2)\bigr)\leq Kd_X(x_1, x_2) + L.\]

%A \emph{quasi-isometric embedding} is simply a $(K,L)$--quasi-isometric embedding for some $K,L$. When a quasi-isometric embedding $f\!:X \to Y$ has the additional property that every point in $Y$ is within a bounded distance from the image $f(X)$, we say $f$ is a \emph{quasi-isometry} and $X$ and $Y$ are \emph{quasi-isometric}.

%Where $X$ is a subinterval $I$ of $\R$ or $\Z$, we call a $(K, L)$--quasi-isometric embedding $f\!:I \to Y$ a \emph{$(K, L)$--quasi-geodesic}. If $K = 1$ and $L = 0$, then $f\!:I \to Y$ is a \emph{geodesic}. The metric space $Y$ is \emph{geodesic} if every two points of $Y$ are joined by some geodesic.
%\end{defn}
In this section, we review some concepts in geometric group theory: $\CAT(0)$ spaces, $\delta$--hyperbolic spaces, $\CAT(0)$ spaces with isolated flats, relatively hyperbolic groups, $\CAT(0)$ boundaries, Gromov boundaries, Bowditch boundaries, and peripheral splitting of relatively hyperbolic groups. We also use the work of Behrstock-Dru{\c{t}}u-Mosher \cite{MR2501302} and Groff \cite{MR3152211} to prove that the Bowditch boundary is a quasi-isometry invariant among relatively hyperbolic groups with minimal peripheral structures. We discuss the work of Caprace \cite{MR2665193, MR3450952}, Behrstock-Hagen-Sisto \cite{MR3623669}, Dani-Thomas \cite{MR3314816}, \cite{IL}, and \cite{Sisto} on peripheral structures of relatively hyperbolic right-angled Coxeter groups and divergence of right-angled Coxeter groups. We also discuss the work of Gersten \cite{Gersten94} and Kapovich--Leeb \cite{KL98} on divergence of $3$--manifold groups. We also mention the concept of colored graphs and the bisimilarity equivalence relation on such graphs.

\subsection{$\CAT(0)$ spaces, $\delta$--hyperbolic spaces, and relatively hyperbolic groups}
We first discuss $\CAT(0)$ spaces, $\delta$--hyperbolic spaces, Gromov boundaries, and $\CAT(0)$ boundaries. We refer the reader to the book \cite{MR1744486} for more details.

\begin{defn}
We say that a geodesic triangle $\Delta$ in a geodesic space $X$ satisfies the \emph{$CAT(0)$ inequality} if $d(x,y) \leq d(\bar{x},\bar{y})$ for all points $x$, $y$ on the edges of $\Delta$ and the corresponding points $\bar{x}, \bar{y}$ on the edges of the comparison triangle $\bar{\Delta}$ in Euclidean space $\EE^2$.
\end{defn}

\begin{defn}
A geodesic space $X$ is said to be a \emph{$CAT(0)$ space} if every triangle in $X$ satisfies the $\CAT(0)$ inequality.

If $X$ is a $\CAT(0)$ space, then the \emph{$CAT(0)$ boundary} of $X$, denoted $\partial X$, is defined to be the set of all equivalence classes of geodesic rays in $X$, where two rays $c$ and $c'$ are equivalent if the Hausdorff distance between them is finite.

%On $\partial X$, we could build a topology as follows:

We note that for any $x \in X$ and $\xi \in \partial X$ there is a unique geodesic ray $\alpha_{x,\xi}: [0,\infty) \to X$ with $\alpha_{x,\xi}(0)=x$ and $\alpha_{x,\xi}(\infty)= \xi$. The CAT(0) boundary has a natural topology with a basis of local neighborhoods (not necessarily open) of any point $\xi\in \partial X$ given by the sets $U(\xi;x, R, \epsilon) = \set{\xi'\in \partial X}{d\bigl(\alpha_{x,\xi}(R),\alpha_{x,\xi'}(R)\leq \epsilon\bigr)}$, where $x \in X$, $\xi \in \partial X$, $R >0$ and $\epsilon >0$.
\end{defn}

\begin{defn}
A geodesic metric space $(X, d)$ is \emph{$\delta$--hyperbolic} if every geodesic triangle with vertices in $X$ is \emph{$\delta$--thin} in the sense that each side lies in the $\delta$--neighborhood of the union of other sides. If $X$ is a $\delta$--hyperbolic space, then we could build the Gromov boundary of $X$, denoted $\partial X$, in the same way as for a $\CAT(0)$ space. That is, the Gromov boundary of $X$ is defined to be the set of all equivalence classes of geodesic rays in $X$, where two rays $c$ and $c'$ are equivalent if the Hausdorff distance between them is finite. However, the topology on it is slightly different from the topology on the boundary of a $\CAT(0)$ space (see for example \cite[Section III.3]{MR1744486} for details).

\end{defn}

We now review relatively hyperbolic groups and related concepts.

\begin{defn}[Combinatorial horoball \cite{MR2448064}]
Let $T$ be any graph with the vertex set $V$. We define the \emph{combinatorial horoball} based at $T$, $\mathcal{H}$($=\mathcal{H}(T)$) to be the following graph:
\begin{enumerate}
\item $\mathcal{H}^{(0)}= V\times \{\{0\}\cup \NN\}$.
\item $\mathcal{H}^{(1)} = \{((t, n), (t, n + 1))\}\cup \set{((t_1, n), (t_2, n))}{d_T(t_1,t_2)\leq 2^n}$. We call edges of the first set \emph{vertical} and of the second \emph{horizontal}.
\end{enumerate}
\end{defn}

\begin{rem}
In \cite{MR2448064}, the combinatorial horoball is described as a 2-complex, but we only require the 1-skeleton for the horoball in this paper.
\end{rem}

\begin{defn}[\cite{MR2448064}]
Let $\mathcal{H}$ be the horoball based at some graph $T$. Let $D\!: \mathcal{H} \to [0,\infty)$ be defined by extending the map on vertices $(t,n)\to n$ linearly across edges. We call $D$ the \emph{depth function} for $\mathcal{H}$ and refer to vertices $v$ with $D(v)=n$ as \emph{vertices of depth $n$} or \emph{depth $n$ vertices}.

Because $T \times \{0\}$ is homeomorphic to $T$, we identify $T$ with $D^{-1}(0)$.
\end{defn}

\begin{defn}[Cusped space \cite{MR2448064}]
\label{cspaces}
Let $G$ be a finitely generated group and $\PP$ a finite collection of finitely generated subgroups of $G$. Let $S$ be a finite generating set of $G$ such that $S\cap P$ generates $P$ for each $P\in\PP$. For each left coset $gP$ of subgroup $P\in\PP$ let $\mathcal{H}(gP)$ be the horoball based at a copy of the subgraph $T_{gP}$ with vertex set $gP$ of the Cayley graph $\Gamma(G,S)$. The \emph{cusped space} $X(G,\PP,S)$ is the union of $\Gamma(G,S)$ with $\mathcal{H}(gP)$ for every left coset of $P\in \PP$, identifying the subgraph $T_{gP}$ with the depth 0 subset of $\mathcal{H}(gP)$. We suppress mention of $S$ and $\PP$ when they are clear from the context.
\end{defn}

\begin{defn}[Relatively hyperbolic group \cite{MR2448064}]
\label{minimal}
Let $G$ be a finitely generated group and $\PP$ a finite collection of finitely generated subgroups of $G$. Let $S$ be a finite generating set of $G$ such that $S\cap P$ generates $P$ for each $P\in\PP$. If the cusped space $X(G,\PP,S)$ is $\delta$--hyperbolic then we say that $G$ is \emph{hyperbolic relative to} $\PP$ or that $(G,\PP)$ is \emph{relatively hyperbolic}. Collection $\PP$ is a \emph{peripheral structure}, each group $P \in \PP$ is a \emph{peripheral subgroup} and its left cosets are \emph{peripheral left cosets}. The peripheral structure $\PP$ is \emph{minimal} if for any other peripheral structure $\field{Q}$ on $G$, each $P\in\PP$ is conjugate into some $Q\in\field{Q}$.
\end{defn}

\begin{rem}
Replacing $S$ for some other finite generating set $S'$ may change the value of $\delta$, but does not affect the hyperbolicity of the cusped space for some $\delta'$ (see \cite{MR2448064}). As a consequence, the concept of relatively hyperbolic group does not depend on the choice of finite generating set.

We say that a finitely generated group is \emph{non-trivially relatively hyperbolic} if it is relatively hyperbolic with respect to some collection of proper subgroups.

%If a finitely generated group contains no collection of proper subgroups with respect to which it is relatively hyperbolic, we say the group is \emph{not relatively hyperbolic}.
\end{rem}

\subsection{The Bowditch boundary}

We now discuss the Bowditch boundary of a relatively hyperbolic group and prove that it is a quasi-isometry invariant when the peripheral structure is minimal. We also recall peripheral splitting of relatively hyperbolic groups.

\begin{defn}[Bowditch boundary \cite{MR2922380}]
\label{defn001}
Let $(G,\PP)$ be a finitely generated relatively hyperbolic group. Let $S$ be a finite generating set of $G$ such that $S\cap P$ generates $P$ for each $P\in\PP$. The \emph{Bowditch boundary}, denoted $\partial(G,\PP)$, is the Gromov boundary of the associated cusped space, $X(G,\PP,S)$.
\end{defn}

\begin{rem}
There is a natural topological action of $G$ on the Bowditch boundary $\partial(G,\PP)$ that satisfies certain properties (see \cite{MR2922380}).

Bowditch has shown that the Bowditch boundary does not depend on the choice of finite generating set (see \cite{MR2922380}). More precisely, if $S$ and $T$ are finite generating sets for $G$ as in the above definition, then the Gromov boundaries of the cusped spaces $X(G,\PP,S)$ and $X(G,\PP,T)$ are $G$--equivariantly homeomorphic (see \cite{MR2922380}).

For each peripheral left coset $gP$ the limit set of the associated horoball $\mathcal{H}(gP)$ consists of a single point in $\partial(G,\PP)$, called the \emph{parabolic point labelled by $gP$}, denoted $v_{gP}$. The stabilizer of the point $v_{gP}$ is the subgroup $gPg^{-1}$. We call each infinite subgroup of $gPg^{-1}$ a \emph{parabolic subgroup} and subgroup $gPg^{-1}$ a \emph{maximal parabolic subgroup}.
\end{rem}

%\subsection{Some results on Bowditch boundaries}
The homeomorphism type of the Bowditch boundary was already known to be a quasi-isometry invariant of a relatively hyperbolic group with minimal peripheral structure (see Groff \cite{MR3152211}). However, we combine the work of Groff \cite{MR3152211} and Behrstock-Dru{\c{t}}u-Mosher \cite{MR2501302} to elaborate on the homeomorphism between Bowditch boundaries induced by a quasi-isometry between two relatively hyperbolic groups with minimal peripheral structures.

\begin{thm}
\label{main}
Let $(G_1,\PP_1)$ and $(G_2,\PP_2)$ be finitely generated non-trivially relatively hyperbolic groups with minimal peripheral structures. If $G_1$ and $G_2$ are quasi-isometric, then there is a homeomorphism $f$ from $\partial(G_1,\PP_1)$ to $\partial(G_2,\PP_2)$ that maps the set of parabolic points of $\partial(G_1,\PP_1)$ bijectively onto the set of parabolic points of $\partial(G_2,\PP_2)$. Moreover, if parabolic point $v$ of $\partial(G_1,\PP_1)$ is labelled by some peripheral left coset $g_1P_1$ in $G_1$ and the parabolic point $f(v)$ of $\partial(G_2,\PP_2)$ is labelled by some peripheral left coset $g_2P_2$ in $G_2$, then $P_1$ and $P_2$ are quasi-isometric.
\end{thm}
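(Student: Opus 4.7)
The plan is to combine two results cited in the excerpt: Behrstock-Dru{\c{t}}u-Mosher's result on coarse preservation of peripheral cosets under a quasi-isometry between relatively hyperbolic groups with minimal peripheral structures, and Groff's theorem that such a coarsely peripheral-preserving quasi-isometry lifts to a quasi-isometry of cusped spaces. The homeomorphism $f$ on Bowditch boundaries, together with its behaviour on parabolic points, will then be read off from the lifted map, and the quasi-isometry between the peripherals $P_1$ and $P_2$ will be extracted from the coarse correspondence at the level of cosets.

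First, I would start with a quasi-isometry $\phi: G_1 \to G_2$. Invoking Behrstock-Dru{\c{t}}u-Mosher \cite{MR2501302} together with the minimality of $\PP_1$ and $\PP_2$, one obtains a bijection $\Phi$ from the set of peripheral left cosets of $G_1$ onto the set of peripheral left cosets of $G_2$ such that for every $g_1 P_1$ the image $\phi(g_1 P_1)$ is at Hausdorff distance in $G_2$ bounded uniformly in the quasi-isometry constants from $\Phi(g_1 P_1)$. Since each peripheral coset $g_i P_i$ is canonically quasi-isometric to $P_i$ via left-multiplication, restricting $\phi$ to $g_1 P_1$, post-composing with a coarse nearest-point projection onto $\Phi(g_1 P_1) = g_2 P_2$, and left-translating back by $g_2^{-1}$ yields a quasi-isometry from $P_1$ to $P_2$. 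This secures both the coset bijection and the moreover clause in one step.

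Second, I apply Groff's extension theorem \cite{MR3152211}: a quasi-isometry coarsely permuting peripheral cosets with uniform constants extends to a quasi-isometry $\tilde\phi: X(G_1,\PP_1,S_1) \to X(G_2,\PP_2,S_2)$ between cusped spaces, constructed by mapping each combinatorial horoball $\mathcal{H}(g_1 P_1)$ onto $\mathcal{H}(\Phi(g_1 P_1))$ using the peripheral quasi-isometry produced in Step~1 at each depth level, and extending via $\phi$ on the Cayley graph piece. Since cusped spaces are $\delta$-hyperbolic, $\tilde\phi$ induces a homeomorphism $f: \partial(G_1,\PP_1) \to \partial(G_2,\PP_2)$ of Gromov boundaries of the cusped spaces, which by Definition~\ref{defn001} are precisely the Bowditch boundaries. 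Because each horoball $\mathcal{H}(g_i P_i)$ has a unique limit point in its Bowditch boundary, namely the parabolic point $v_{g_i P_i}$, and because $\tilde\phi$ carries $\mathcal{H}(g_1 P_1)$ within bounded Hausdorff distance of $\mathcal{H}(\Phi(g_1 P_1))$, the induced boundary map $f$ sends $v_{g_1 P_1}$ to $v_{\Phi(g_1 P_1)}$. This delivers the bijection of parabolic-point sets, and the final clause follows because $f(v_{g_1 P_1}) = v_{g_2 P_2}$ forces $g_2 P_2 = \Phi(g_1 P_1)$, whence $P_1$ and $P_2$ are quasi-isometric by Step~1.

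The main technical obstacle is the input from Behrstock-Dru{\c{t}}u-Mosher: establishing that minimality of the peripheral structures suffices for coarse peripheral preservation, independent of any auxiliary hypothesis such as non-relative hyperbolicity of the peripherals themselves. Once this coarse correspondence is in hand, the remainder is essentially bookkeeping inside the Groff framework: the extension to cusped spaces is standard, the boundary homeomorphism comes from the classical fact that quasi-isometries of $\delta$-hyperbolic spaces induce homeomorphisms of Gromov boundaries, and compatibility with horoballs automatically transports parabolic points to parabolic points with quasi-isometric stabilisers.
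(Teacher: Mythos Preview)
Your proposal is correct and follows essentially the same route as the paper's proof: invoke Behrstock--Dru{\c{t}}u--Mosher (Theorem~4.1 of \cite{MR2501302}) to obtain the coarse bijection of peripheral cosets from minimality, deduce the quasi-isometry $P_1\simeq P_2$ from the resulting Hausdorff-close correspondence, extend to the cusped spaces via Groff \cite{MR3152211}, and read off the boundary homeomorphism together with its action on parabolic points from the horoball-to-horoball behaviour of the extension. The paper's proof is terser but structurally identical; your flagged technical obstacle (that minimality alone suffices for the BDM input) is handled in the paper exactly as you handle it, namely by direct citation of Theorem~4.1 in \cite{MR2501302}.
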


\begin{proof}
Fix generating sets $S_1$ and $S_2$ as in Definition \ref{defn001} for $G_1$ and $G_2$ respectively, then there is a quasi-isometry $q\!:\Gamma(G_1,S_1)\to\Gamma(G_2,S_2)$. Since the peripheral structures are minimal, the map $q$ takes a peripheral left coset $g_1P_1$ of $G_1$ to within a uniform bounded distance of the corresponding peripheral left coset $g_2P_2$ of $G_2$ by Theorem 4.1 in \cite{MR2501302}. In particular, $P_1$ and $P_2$ are quasi-isometric. Using the proof of Theorems 6.3 in \cite{MR3152211}, we can extend $q$ to the quasi-isometry $\hat{q}\!:X(G_1,\PP_1,S_1)\to X(G_2,\PP_2,S_2)$ between cusped spaces such that $\hat{q}$ restricts to a quasi-isometric embedding on each individual horoball of $X(G_1,\PP_1,S_1)$ and the image of the horoball lies in some neighborhood of a horoball of $X(G_2,\PP_2,S_2)$. Therefore, there is a homeomorphism $f$ induced by $\hat{q}$ from $\partial(G_1,\PP_1)$ to $\partial(G_2,\PP_2)$ that maps the set of parabolic points of $\partial(G_1,\PP_1)$ bijectively onto the set of parabolic points of $\partial(G_2,\PP_2)$. Moreover, if the parabolic point $v$ of $\partial(G_1,\PP_1)$ is labelled by some peripheral left coset $g_1P_1$ in $G_1$ and the parabolic point $f(v)$ of $\partial(G_2,\PP_2)$ is labelled by some peripheral left coset $g_2P_2$ in $G_2$, then by the above observation $P_1$ and $P_2$ are quasi-isometric.
\end{proof}

\begin{defn}[\cite{MR1837220}]
\label{defn:splitting}
Let $G$ be a group. By a \emph{splitting} of $G$, over a given class of subgroups, we mean a presentation of $G$ as a finite graph of groups, where each edge group belongs to this class. Such a splitting is said to be \emph{relative} to another class $\PP$ of subgroups if each element of $\PP$ is conjugate into one of the vertex groups. A splitting is said to be \emph{trivial} if there exists a vertex group equal to $G$.

Assume $G$ is hyperbolic relative to a collection $\PP$. A \emph{peripheral splitting} of $(G,\PP)$ is a representation of $G$ as a finite bipartite graph of groups, where $\PP$ consists of precisely of the (conjugacy classes of) vertex groups of one color. Obviously, any peripheral splitting of $(G,\PP)$ is relative to $\PP$ and over subgroups of elements of $\PP$. Peripheral splittings of $(G,\PP)$ are closely related to cut points in the Bowditch boundary $\partial(G,\PP)$ (\cite{MR1837220}).
\end{defn}

\begin{defn}
Given a compact connected metric space $X$, a point $x \in X$ is a \emph{global cut point} (or just simply \emph{cut point}) if $X-\{x\}$ is not connected. If $\{a, b\}\subset X$ contains no cut points and $X-\{a, b\}$ is not connected, then $\{a, b\}$ is a \emph{cut pair}. A point $x \in X$ is a \emph{local cut point} if $X-\{x\}$ is not connected, or $X-\{x\}$ is connected and has more than one end. %A set $Y$ is called \emph{inseparable} if no two points of $Y$ lie in different components of the complement of any cut pair.
\end{defn}

\subsection{$\CAT(0)$ spaces with isolated flats}

In this section, we discuss the work of Hruska-Kleiner \cite{MR2175151} on $\CAT(0)$ spaces with isolated flats.

\begin{defn}
A \emph{$k$--flat} in a $\CAT(0)$ space $X$ is an isometrically embedded copy of Euclidean space $\EE^k$ for some $k \geq 2$. In particular, note that a geodesic line is not considered to be a flat.
\end{defn}

\begin{defn}
Let $X$ be a $\CAT(0)$ space, $G$ a group acting geometrically on $X$, and $\mathcal{F}$ a $G$--invariant set of flats in $X$. We say that $X$ \emph{has isolated flats with respect to $\mathcal{F}$} if the following two conditions hold.
\begin{enumerate}
\item There is a constant $D$ such that every flat $F \subset X$ lies in a $D$--neighborhood of some $F' \in \mathcal{F}$.
\item For each positive $r < \infty$ there is a constant $\rho = \rho(r) < \infty$ so that for any two distinct flats $F, F' \in \mathcal{F}$ we have $\diam\bigl(N_r(F) \cap N_r(F′)\bigr)<\rho$.
\end{enumerate}
We say $X$ has \emph{isolated flats} if it has isolated flats with respect to some $G$--invariant set of flats.
\end{defn}

\begin{thm}[\cite{MR2175151}]
Suppose $X$ has isolated flats with respect to $\mathcal{F}$. For each $F \in \mathcal{F}$ the stabilizer $\Stab_G(F)$ is virtually abelian and acts cocompactly on $F$. The set of stabilizers of flats $F \in \mathcal{F}$ is precisely the set of maximal virtually abelian subgroups of $G$ of rank at least two. These stabilizers lie in only finitely many conjugacy classes.
\end{thm}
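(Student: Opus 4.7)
The plan is to establish the three assertions in order, using the isolation conditions together with standard $\CAT(0)$ machinery---principally Bieberbach's theorem on crystallographic groups and the Flat Torus Theorem. For the cocompactness of $\Stab_G(F)$ on $F$, fix a basepoint $x_0 \in F$ and use cocompactness of $G \curvearrowright X$ to find, for every $y \in F$, some $g \in G$ with $d(y, gx_0) \leq R$ for a uniform $R$. Since $\mathcal{F}$ is $G$-invariant, $gF \in \mathcal{F}$ and $y$ lies in $N_R(F) \cap N_R(gF)$. If $gF \neq F$, condition (2) forces this intersection to have diameter at most $\rho(R)$; combining this with properness of the action (only finitely many $g$ satisfy $gx_0 \in B(y, R)$) shows that for $y$ ranging over all but a bounded portion of $F$ one is forced into $g \in \Stab_G(F)$. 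This gives coarse density of $\Stab_G(F) \cdot x_0$ in $F$, hence cocompactness; Bieberbach's theorem then identifies $\Stab_G(F)$ as a crystallographic group acting on $F \cong \EE^k$, and therefore virtually $\Z^k$ with $k = \dim F \geq 2$.

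For the second assertion, every stabilizer $\Stab_G(F)$ is virtually abelian of rank $\geq 2$ by the first step, so only maximality and surjectivity remain. If $\Stab_G(F) \subsetneq H$ with $H$ virtually abelian, then a finite-index free abelian subgroup $A \leq H$ satisfies $F \subset \mathrm{Min}(A)$ by the Flat Torus Theorem, so $A$ (and hence $H$) acts by translations on a family of flats parallel to $F$; the isolation condition (2) forces this family to collapse to $F$ itself, giving $H \leq \Stab_G(F)$, a contradiction. Conversely, if $H \leq G$ is maximal virtually abelian of rank $k \geq 2$, apply the Flat Torus Theorem to a finite-index $\Z^k \leq H$ to produce an $H$-invariant $k$-flat $F_0$. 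Condition (1) places $F_0$ in the $D$-neighborhood of some $F \in \mathcal{F}$, and a parallelism argument using condition (2) promotes this proximity to the stronger statement $H \leq \Stab_G(F)$; maximality of $H$ then yields equality.

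Finally, for finiteness of conjugacy classes of these stabilizers, pick a compact fundamental domain $K \subset X$ for $G \curvearrowright X$. Every $F \in \mathcal{F}$ admits a $G$-translate meeting $K$, so it suffices to bound the number of such translates up to $G$. Condition (2) applied near $K$ prevents arbitrarily many distinct flats of $\mathcal{F}$ from accumulating there, since their pairwise intersections with any fixed neighborhood of $K$ have uniformly bounded diameter; combined with properness, this forces only finitely many $G$-orbits of flats in $\mathcal{F}$, and conjugate flats give conjugate stabilizers. I expect the main obstacle to lie in the second paragraph: promoting a flat $F_0$ that is merely $D$-close to some $F \in \mathcal{F}$ to genuine stabilization of $F$ by $H$ requires delicate coordination between the neighborhood constant $D$ of condition (1) and the diameter function $\rho(r)$ of condition (2), and is the technical heart of the isolated-flats framework.
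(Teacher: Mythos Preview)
The paper does not prove this theorem: it is quoted in the preliminaries as a background result from Hruska--Kleiner \cite{MR2175151}, with no argument given. Your proposal therefore cannot be compared against any proof in the paper itself.

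As a standalone sketch, your outline follows the expected route (isolation plus properness for cocompactness, Bieberbach for the virtually abelian conclusion, the Flat Torus Theorem for the converse direction), and you have correctly identified the most delicate point---promoting the Flat-Torus flat $F_0$ from being merely $D$-close to some $F \in \mathcal{F}$ to being genuinely stabilized by $H$. There is one further loose step worth flagging: in your cocompactness argument, the sentence ``for $y$ ranging over all but a bounded portion of $F$ one is forced into $g \in \Stab_G(F)$'' does not follow from what precedes it. Different points $y \in F$ may be covered by elements $g$ whose flats $gF$ are distinct from $F$ and from one another, and the bound $\diam\bigl(N_R(F) \cap N_R(gF)\bigr) \leq \rho(R)$ by itself does not prevent infinitely many such flats from covering all of $F$. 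One typically needs an auxiliary local-finiteness statement for $\mathcal{F}$ (only finitely many flats of $\mathcal{F}$ meet any bounded region) to close this gap.
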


\begin{thm}[\cite{MR2175151}]
Let $X$ have isolated flats with respect to $\mathcal{F}$. Then $G$ is relatively hyperbolic with respect to the collection of all maximal
virtually abelian subgroups of rank at least two.
\end{thm}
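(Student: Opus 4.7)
The plan is to verify relative hyperbolicity directly from the cusped-space definition (Definitions~\ref{cspaces} and~\ref{minimal}). By the preceding theorem, the collection $\PP$ of maximal virtually abelian subgroups of $G$ of rank at least two coincides, up to finitely many conjugacy classes, with $\set{\Stab_G(F)}{F \in \mathcal{F}}$, and each such stabilizer acts cocompactly on its flat. Choose a finite generating set $S$ of $G$ with $S\cap P$ generating $P$ for every $P \in \PP$, and form the cusped space $X(G,\PP,S)$. The goal then reduces to showing that this cusped space is Gromov hyperbolic.

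The first step is to replace the Cayley graph by a more geometric model. Since $G$ acts geometrically on $X$, the orbit map is a $G$-equivariant quasi-isometry $\Gamma(G,S)\to X$, and any $P$-orbit in $F = F_P$ is quasi-dense in $F$ by cocompactness. I would attach a combinatorial horoball $\mathcal{H}(F)$ to each flat $F \in \mathcal{F}$ along $F$, producing a hybrid space $\hat X$ on which $G$ acts cocompactly away from the horoballs. Using cocompactness of $\Stab_G(F)$ on $F$ and the uniform quasi-isometry between $\Gamma(P,S\cap P)$ and $F$, one checks that $\hat X$ is $G$-equivariantly quasi-isometric to $X(G,\PP,S)$. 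It therefore suffices to show that $\hat X$ is $\delta$-hyperbolic.

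For hyperbolicity of $\hat X$ I would use a thin-triangle argument combining three ingredients. Inside a single combinatorial horoball, triangles are uniformly thin by the standard analysis of combinatorial horoballs in \cite{MR2448064}. Condition~(2) of isolated flats forces bounded coarse intersection of distinct $\mathcal{F}$-flats, hence of distinct horoballs, so a $\hat X$-geodesic visits each horoball in a single controlled excursion. Finally, condition~(1) together with cocompactness of the $G$-action yields a constant $\delta_0$ such that any geodesic triangle in $X$ whose sides remain outside a fixed neighborhood of $\bigcup_{F \in \mathcal{F}} F$ is $\delta_0$-thin. Splicing these three estimates along the horoball entries and exits of the sides of a triangle in $\hat X$ produces a uniform hyperbolicity constant.

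The main obstacle is the external thinness statement for $X$. The subtle point is that the isolated flats hypothesis controls only the periodic flats in $\mathcal{F}$, but a priori $X$ could exhibit flat-like behavior elsewhere; I would argue by contradiction via a rescaling/ultralimit argument, using the cocompact $G$-action and the $\CAT(0)$ Flat Plane Theorem to produce, from any failure of uniform thinness outside a large neighborhood of $\bigcup \mathcal{F}$, an isometrically embedded Euclidean half-plane in $X$ lying at unbounded distance from every $F \in \mathcal{F}$, which violates condition~(1). Once this flat-avoidance thinness is established, the three estimates combine to give a uniform $\delta$ for $\hat X$, and hence hyperbolicity of the cusped space $X(G,\PP,S)$, proving that $G$ is hyperbolic relative to $\PP$.
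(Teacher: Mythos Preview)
The paper does not prove this theorem; it is quoted from Hruska--Kleiner \cite{MR2175151} as a black box in the preliminaries. So there is no proof in the paper to compare against, and your proposal must stand on its own.

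Your overall architecture is reasonable and is in the spirit of one standard route to relative hyperbolicity: pass from the cusped space to a geometric model $\hat X$ obtained by coning off (or horoballing) the flats, and prove $\hat X$ is Gromov hyperbolic by splicing horoball estimates with a thinness statement for the complement. The horoball thinness and the bounded-coarse-intersection of distinct horoballs are routine given isolated flats and \cite{MR2448064}. You correctly identify that the whole weight of the theorem rests on the ``external thinness'' statement.

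That step, however, has a genuine gap as written. Condition~(1) in the definition of isolated flats constrains only genuine $k$-flats ($k\ge 2$); it says nothing about half-planes. Your contradiction argument proposes to extract, from a sequence of increasingly fat triangles avoiding larger and larger neighborhoods of $\bigcup\mathcal{F}$, an isometrically embedded half-plane in $X$ far from every $F\in\mathcal{F}$, and to declare this a violation of~(1). It is not: a half-plane is not a flat, and the Flat Plane Theorem does not assert that half-planes extend to full planes. Even if you manage to promote the half-plane to a flat $E$ (which requires a separate argument), condition~(1) only tells you $E$ lies near some $F\in\mathcal{F}$; you would still have to argue that the original fat triangles were near $F$, which is exactly the uniform statement you are trying to prove. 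In short, the limiting object you produce does not contradict the hypothesis you have.

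This is not a cosmetic issue: the ``external'' estimate you need is essentially the content of Hruska--Kleiner's theorem. Their actual proof does not go via fat triangles and the Flat Plane Theorem; it establishes that every asymptotic cone of $X$ is tree-graded with respect to the ultralimits of the flats in $\mathcal{F}$, and then invokes the Dru\c{t}u--Sapir characterization of relative hyperbolicity. The key technical input is a uniform ``close to a flat implies close to an $\mathcal{F}$-flat'' lemma, proved using $\CAT(0)$ convexity and the packing condition~(2), which is stronger than what a bare Flat Plane Theorem argument gives. If you want to salvage your direct approach, you would need to prove that lemma first; once you have it, the thin-triangle splicing you describe does go through.
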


The previous theorem also has the following converse.

\begin{thm}[\cite{MR2175151}]
\label{hruskaifp}
Let $G$ be a group acting geometrically on a $\CAT(0)$ space $X$. Suppose $G$ is relatively hyperbolic with respect to a family of virtually
abelian subgroups. Then $X$ has isolated flats
\end{thm}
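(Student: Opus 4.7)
The plan is to produce a $G$-invariant collection $\mathcal{F}$ of flats in $X$, indexed by conjugacy classes of maximal peripheral subgroups of rank at least two, and then verify both conditions of the isolated flats definition by exploiting the coarse geometry of relatively hyperbolic groups. Let $\PP$ denote the family of virtually abelian subgroups with respect to which $G$ is relatively hyperbolic; peripheral subgroups that are finite or virtually cyclic may be discarded since they contribute no flats.

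First, for each $P \in \PP$ of rank $\geq 2$, I would invoke the Flat Torus Theorem to obtain a $P$-invariant, closed convex subspace of $X$ that splits as $F_P \times Y_P$, where $F_P$ is a flat on which $P$ acts cocompactly. Choosing one such $F_P$ per conjugacy class and closing under the $G$-action produces a $G$-invariant family $\mathcal{F}$. Moreover, by a standard argument using the relatively hyperbolic structure, every maximal virtually abelian subgroup of rank at least two in $G$ is conjugate into some element of $\PP$, so the stabilizers of flats in $\mathcal{F}$ are precisely the maximal parabolic subgroups of rank at least two.

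Next, to verify condition~(1), I would take an arbitrary flat $E \subset X$ of dimension $k \geq 2$. Since $G$ acts geometrically, there is a virtually $\Z^k$ subgroup $A$ with $A \leq \Stab_G(E)$. Because peripheral subgroups are maximal among virtually abelian subgroups of rank $\geq 2$, the group $A$ is contained in some conjugate $gPg^{-1}$ with $P \in \PP$; hence $A$ stabilizes both $E$ and the flat $gF_P \in \mathcal{F}$. A Flat Torus Theorem/convexity argument, combined with cocompactness of the $A$-action on both flats, then forces $E$ to lie in a uniformly bounded neighborhood of $gF_P$, with the bound depending only on $G$ and $X$.

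Finally, for condition~(2), I would argue by contradiction: if distinct flats $F = gF_P$ and $F' = g'F_{P'}$ in $\mathcal{F}$ had $r$-neighborhoods intersecting in arbitrarily large diameter, then after translating basepoints one extracts sequences $h_n \in gPg^{-1}$ and $h_n' \in g'P'(g')^{-1}$ with $d(h_n, h_n')$ uniformly bounded and $d(1, h_n) \to \infty$. Transporting this to the Cayley graph via the Milnor--Schwarz lemma contradicts the bounded coarse intersection of distinct peripheral cosets in a relatively hyperbolic group (for instance, the thin-quadrilaterals property of Dru\c{t}u--Sapir, or equivalently Osin's bounded coset penetration). This last step is the main obstacle: translating the algebraic notion of relative hyperbolicity into the geometric isolation of flats is the crux of the Hruska--Kleiner equivalence, while condition~(1) follows more routinely from the Flat Torus Theorem and the maximality of peripheral virtually abelian subgroups.
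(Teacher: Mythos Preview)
The paper does not prove this theorem; it is quoted from Hruska--Kleiner \cite{MR2175151} as background, with no argument given. So there is nothing in the paper to compare your proposal against.

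Evaluated on its own, your outline has a genuine gap in the verification of condition~(1). You assert that for an arbitrary flat $E \subset X$ of dimension $k\ge 2$, ``there is a virtually $\Z^k$ subgroup $A$ with $A \leq \Stab_G(E)$.'' This does not follow merely from $G$ acting geometrically: a flat in a $\CAT(0)$ space with a geometric group action need not be periodic (think of non-axial lines crossed with $\R$, or of products of trees). Indeed, the statement that every flat lies near a periodic one is precisely what condition~(1) asks you to prove, so positing a cocompact stabilizer for $E$ is circular. The substantive step is to use relative hyperbolicity directly on $E$ itself: a $k$-flat gives a quasi-isometric embedding of $\R^k$ into $G$, and one must argue (for instance via the Dru\c{t}u--Sapir tree-graded characterization of relative hyperbolicity, or the relative fellow-travelling/Morse properties that Hruska--Kleiner develop) that any such quasi-flat lies in a bounded neighborhood of a single peripheral coset $gP$; only then can $E$ be compared to $gF_P$ through that common coset. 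Your sketch for condition~(2) is essentially sound, but the obstacle you flag in your last paragraph is in fact sharpest for condition~(1), not~(2).
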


A group $G$ that admits an action on a $\CAT(0)$ space with isolated flats has a ``well-defined'' $\CAT(0)$ boundary, often denoted by $\partial G$, by the following theorem.

\begin{thm}[\cite{MR2175151}]
Let $G$ act properly, cocompactly, and isometrically on $\CAT(0)$ spaces $X$ and $Y$. If $X$ has isolated flats, then so does $Y$, and
there is a $G$--equivariant homeomorphism $\partial X \to \partial Y$.
\end{thm}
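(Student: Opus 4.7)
The plan is to prove the two assertions in order. For the first, since $X$ has isolated flats and $G$ acts geometrically on $X$, the preceding theorem gives that $G$ is relatively hyperbolic with respect to the collection $\mathcal{V}$ of maximal virtually abelian subgroups of rank at least two. Because $G$ also acts geometrically on the $\CAT(0)$ space $Y$, Theorem~\ref{hruskaifp} applied to $Y$ immediately yields that $Y$ has isolated flats, say with respect to a $G$-invariant collection $\mathcal{F}_Y$; denote by $\mathcal{F}_X$ the analogous collection on the $X$-side.

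To produce the $G$-equivariant homeomorphism, I invoke the Milnor--Schwarz lemma to obtain a $G$-equivariant quasi-isometry $q\colon X \to Y$. For each $F \in \mathcal{F}_X$ the stabilizer $A = \Stab_G(F)$ is a maximal virtually abelian subgroup of rank at least two acting cocompactly on $F$. By the characterization of maximal flats in an isolated flats space through their stabilizers, the same subgroup $A$ acts cocompactly on a unique flat $F' \in \mathcal{F}_Y$ of the same dimension. Thus $q$ induces a $G$-equivariant bijection $\mathcal{F}_X \leftrightarrow \mathcal{F}_Y$, and for each matched pair $(F,F')$ restricts to a quasi-isometry between Euclidean spaces of equal rank, which by standard quasi-isometric rigidity of $\mathbb{E}^k$ extends to a homeomorphism of their visual boundary spheres $\partial F \to \partial F'$.

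Next I assemble these local pieces into a global map. For a $\CAT(0)$ space with isolated flats, Hruska--Kleiner describe $\partial X$ as a ``blow-up'' of the Bowditch boundary $\partial(G,\mathcal{V})$, in which each parabolic point labelled by a coset $gA$ is replaced by the boundary sphere of the corresponding flat $gF$; and the same description applies to $\partial Y$. By Theorem~\ref{main}, the quasi-isometry $q$ induces a $G$-equivariant homeomorphism of Bowditch boundaries that sends the parabolic point labelled by $gA$ on the $X$-side to the parabolic point labelled by the same coset $gA$ on the $Y$-side. Combining this with the sphere-level homeomorphisms constructed above, all of which are $G$-equivariantly compatible because $A$ acts cocompactly on both $F$ and $F'$, yields a $G$-equivariant bijection $f\colon \partial X \to \partial Y$.

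The main obstacle is to verify continuity of $f$ in the cone topology. Away from neighborhoods of the flats the space is coarsely Gromov hyperbolic, so $q$ extends continuously to the ``hyperbolic part'' of $\partial X$ by the usual Morse lemma argument for quasi-geodesics. Inside a neighborhood of a flat $F$, the restriction of $q$ stays within bounded Hausdorff distance of the matched flat $F'$, and the Euclidean structure supplies continuity of the induced sphere homeomorphism. The isolated flats hypothesis is exactly what patches the two regimes together: it ensures that sequences approaching boundary points of distinct flats cannot be confused with one another or with hyperbolic directions, and that thickened neighborhoods of distinct flats separate at infinity. Compactness of $\partial X$ then upgrades the continuous $G$-equivariant bijection $f$ to the required homeomorphism.
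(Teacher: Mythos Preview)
The paper does not supply a proof of this theorem; it is stated in the preliminaries as a citation of Hruska--Kleiner \cite{MR2175151}, so there is no ``paper's own proof'' to compare against.

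Your first paragraph is fine: the chain ``$X$ has isolated flats $\Rightarrow$ $G$ is relatively hyperbolic with respect to maximal virtually abelian subgroups $\Rightarrow$ $Y$ has isolated flats'' is exactly how one deduces the first assertion from the surrounding theorems.

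For the boundary homeomorphism, your outline has the right shape but two real gaps. First, invoking Theorem~\ref{main} is misplaced: the Bowditch boundary $\partial(G,\mathcal{V})$ is an invariant of the pair $(G,\mathcal{V})$, not of the space, so there is nothing to transport---it is literally the same object on both sides. What you actually need is the quotient description (as in Theorem~\ref{cool} or Hruska--Kleiner's own version) identifying $\partial X$ and $\partial Y$ each with a ``blow-up'' of that common Bowditch boundary. Second, and more seriously, your continuity argument is not a proof. Knowing a $G$-equivariant bijection that is continuous on the hyperbolic part and separately on each flat sphere does not give continuity at points of $\partial F$ approached by sequences from outside $F$; one must control how geodesic rays in $X$ that eventually enter a deep neighborhood of $F$ correspond under $q$ to rays entering the matched flat $F'$ in $Y$, uniformly in the cone topology. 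Hruska--Kleiner handle this with a relative Morse lemma (quasi-geodesics in isolated-flats spaces fellow-travel geodesics outside controlled neighborhoods of flats) together with quantitative control on projections to flats. Your sentence ``the isolated flats hypothesis is exactly what patches the two regimes together'' names the mechanism without supplying it; a rigorous version would need those projection/Morse estimates, which is the substantive content of the Hruska--Kleiner argument.
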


\subsection{Right-angled Coxeter groups and their relatively hyperbolic structures}

In this section, we review the concepts of right-angled Coxeter groups and Davis complexes. We also review the work of Caprace \cite{MR2665193, MR3450952} and Behrstock-Hagen-Sisto \cite{MR3623669} on peripheral structures of relatively hyperbolic right-angled Coxeter groups.

\begin{defn}
Given a finite simplicial graph $\Gamma$, the associated \emph{right-angled Coxeter group} $G_\Gamma$ is generated by the set $S$ of vertices of $\Gamma$ and has relations $s^2 = 1$ for all $s$ in $S$ and $st = ts$ whenever $s$ and $t$ are adjacent vertices. Graph $\Gamma$ is the \emph{defining graph} of a right-angled Coxeter group $G_\Gamma$ and its flag complex $\Delta=\Delta(\Gamma)$ is the \emph{defining nerve} of the group. Therefore, sometimes we also denote the right-angled Coxeter group $G_\Gamma$ by $G_\Delta$ where $\Delta$ is the flag complex of $\Gamma$. 

Let $S_1$ be a subset of $S$. The subgroup of $G_\Gamma$ generated by $S_1$ is a right-angled Coxeter group $G_{\Gamma_1}$, where $\Gamma_1$ is the induced subgraph of $\Gamma$ with vertex set $S_1$ (i.e. $\Gamma_1$ is the union of all edges of $\Gamma$ with both endpoints in $S_1$). The subgroup $G_{\Gamma_1}$ is called a \emph{special subgroup} of $G_\Gamma$.
\end{defn}

\begin{rem}
The right-angled Coxeter group $G_{\Gamma}$ is one-ended if and only if $\Gamma$ is not equal to a complete graph, is connected and has no separating complete subgraphs (see Theorem 8.7.2 in \cite{MR2360474}).
\end{rem}

\begin{defn}
Given a finite simplicial graph $\Gamma$, the associated \emph{Davis complex} $\Sigma_\Gamma$ is a cube complex constructed as follows. For every $k$--clique, $T \subset \Gamma$, the special subgroup $G_T$ is isomorphic to the direct product of $k$ copies of $\field{Z}_2$. Hence, the Cayley graph of $G_T$ is isomorphic to the 1--skeleton of a $k$--cube. The Davis complex $\Sigma_\Gamma$ has 1--skeleton the Cayley graph of $G_\Gamma$, where edges are given unit length. Additionally, for each $k$--clique, $T \subset \Gamma$, and coset $gG_T$, we glue a unit $k$--cube to $gG_T \subset\Sigma_\Gamma$. The Davis complex $\Sigma_\Gamma$ is a $\CAT(0)$ space and the group $G_\Gamma$ acts properly and cocompactly on the Davis complex $\Sigma_\Gamma$ (see \cite{MR2360474}).
\end{defn}

\begin{thm}[Theorem A' in \cite{MR2665193, MR3450952}]
\label{th1}
Let $\Gamma$ be a simplicial graph and $\JJ$ be a collection of induced subgraphs of $\Gamma$. Then the right-angled Coxeter group $G_\Gamma$ is hyperbolic relative to the collection $\PP=\set{G_J}{J\in\JJ}$ if and only if the following three conditions hold:
\begin{enumerate}
\item If $\sigma$ is an induced 4-cycle of $\Gamma$, then $\sigma$ is an induced 4-cycle of some $J\in \JJ$.
\item For all $J_1$, $J_2$ in $\JJ$ with $J_1\neq J_2$, the intersection $J_1\cap J_2$ is empty or $J_1\cap J_2$ is a complete subgraph of $\Gamma$.
\item If a vertex $s$ commutes with two non-adjacent vertices of some $J$ in $\JJ$, then $s$ lies in $J$.
\end{enumerate}
\end{thm}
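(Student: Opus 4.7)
The plan is to establish the two directions of the equivalence separately, relying on the cusped space definition of relative hyperbolicity (Definition~\ref{cspaces}) together with three standard structural facts about parabolic subgroups: any two distinct maximal parabolic subgroups have finite coarse intersection, every virtually abelian subgroup of rank at least two is conjugate into a maximal parabolic, and the centralizer of an infinite order element of a maximal parabolic subgroup is itself contained in that parabolic subgroup. On the Coxeter side I would also use the fact that the intersection of two special subgroups of $G_\Gamma$ is again special, and that $G_K$ is finite if and only if $K$ is a complete subgraph.

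For the ``only if'' direction, I would assume $(G_\Gamma,\PP)$ is relatively hyperbolic and deduce the three conditions in turn. For (1), an induced $4$-cycle $\sigma=(s_1,s_2,s_3,s_4)$ generates a special subgroup isomorphic to $D_\infty\times D_\infty$, which contains an undistorted $\ZZ^2$. This $\ZZ^2$ must be conjugate into some $G_J$, and because conjugates of specials meet specials in specials, I would conclude that $\sigma\subseteq J$, so $\sigma$ is an induced $4$-cycle of $J$. For (2), the intersection $G_{J_1}\cap G_{J_2}=G_{J_1\cap J_2}$ must be finite when $J_1\neq J_2$, so $J_1\cap J_2$ is a complete subgraph (possibly empty). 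For (3), if $s$ commutes with non-adjacent vertices $a,b\in J$, then $s$ centralizes the infinite order element $ab\in G_J$; the centralizer fact forces $\langle s\rangle\leq G_J$, hence $s\in J$.

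For the ``if'' direction, which I expect to be the main obstacle, I would start from conditions (1)--(3) and prove that the cusped space $X(G_\Gamma,\PP,S)$ is $\delta$--hyperbolic. My preferred strategy is to cone off the left cosets of the subgroups $G_J$ inside the Davis complex $\Sigma_\Gamma$ and show the resulting space is Gromov hyperbolic, from which relative hyperbolicity follows by a standard argument. Condition (1) is used to absorb every $2$-dimensional flat of $\Sigma_\Gamma$ into some translate $g\Sigma_J$, since every such flat is seen by an induced $4$-cycle; condition (2) produces the tree-like coarse intersection pattern among the $g\Sigma_J$; and condition (3) prevents infinite commuting structure from escaping any $g\Sigma_J$.

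The main technical hurdle is the ``flat absorption'' step: I would prove by induction on complexity that every $2$-flat in $\Sigma_\Gamma$ lies in a uniformly bounded neighborhood of a single translate $g\Sigma_J$, with the base case produced by (1) and the inductive step propagated by (3) together with the special-intersects-special lemma. Once flat absorption is in hand, the bounded coset penetration property and hyperbolicity of the coned-off complex follow from the Farb/Bowditch/Dru\c{t}u--Sapir machinery already available in the relatively hyperbolic literature. I expect this flat absorption lemma, rather than the necessity direction, to carry essentially all of the work.
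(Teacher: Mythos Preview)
The paper does not prove this theorem. It is stated as Theorem~\ref{th1} with the attribution ``Theorem~A$'$ in \cite{MR2665193, MR3450952}'' and is used throughout as a black box; there is no proof in the paper to compare your proposal against.

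That said, a brief assessment of your sketch on its own merits: the ``only if'' direction is essentially sound. For (1) you need one extra step you gloss over: from the fact that some conjugate of the $\ZZ^2$ in $G_\sigma$ lies in $G_J$ you must get $G_\sigma\le G_J$ itself, not merely up to conjugacy. This follows from almost malnormality of maximal parabolics together with the standard Coxeter fact that a special subgroup conjugate into another special subgroup is already contained in it. Your arguments for (2) and (3) are correct; for (3) the clean phrasing is that maximal parabolics are almost malnormal, so $sG_Js^{-1}\cap G_J\supseteq\langle ab\rangle$ infinite forces $s\in G_J$, hence $s\in J$.

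For the ``if'' direction your outline is plausible but too vague to count as a proof. The phrase ``every $2$-flat is seen by an induced $4$-cycle'' is the genuine content and is not obvious: flats in $\Sigma_\Gamma$ need not be subcomplexes, and one must argue via the wall structure (or via the parabolic-closure/Krammer machinery that Caprace actually uses) that any flat lies near a coset $g\Sigma_J$. The induction you describe (``propagated by (3) together with the special-intersects-special lemma'') is not a recognizable argument as written; Caprace's actual proof goes through a characterization of relative hyperbolicity for general Coxeter groups in terms of affine parabolic subgroups and their normalizers, which in the right-angled case specializes to the three combinatorial conditions. If you want a self-contained proof along your lines, the cleanest route is probably to verify the isolated-flats condition of Hruska--Kleiner (Theorem~\ref{hruskaifp}) when the peripherals are virtually abelian, and otherwise to appeal to Caprace directly.
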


%\begin{thm}[Theorem B in \cite{MR2665193, MR3450952}]
%\label{C1}
%Let $\Gamma$ be a simplicial graph. If $G_\Gamma$ is relatively hyperbolic with respect to finitely generated subgroups $H_1, \cdots, H_m$, then each $H_i$ is conjugate to a special subgroup of $G_\Gamma$.
%\end{thm}

\begin{thm}[Theorem I in \cite{MR3623669}]
\label{n1}
Let $\mathcal{T}$ be the class consisting of the finite simplicial graphs $\Lambda$ such that $G_\Lambda$ is strongly algebraically thick. Then for any finite simplicial graph $\Gamma$ either: $\Gamma \in \mathcal{T}$, or there exists a collection $\JJ$ of induced subgraphs of $\Gamma$ such that $\JJ \subset \mathcal{T}$ and $G_\Gamma$ is hyperbolic relative to the collection $\PP=\set{G_J}{J \in \JJ}$ and this peripheral structure is minimal.
\end{thm}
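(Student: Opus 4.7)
The plan is to reduce the problem to Caprace's combinatorial characterization of relatively hyperbolic structures (Theorem~\ref{th1}) by constructing a canonical collection $\JJ$ of induced subgraphs of $\Gamma$ whose associated special subgroups are strongly algebraically thick, and then verifying both Caprace's three conditions and minimality. The construction proceeds by induction on the order of thickness. At order zero one identifies the induced subgraphs of $\Gamma$ that are nontrivial joins (i.e., $\Lambda = \Lambda_1 * \Lambda_2$ where neither $\Lambda_i$ is a clique), since for such $\Lambda$ the group $G_\Lambda$ contains an obvious copy of a product of infinite dihedral groups and is thick of order $0$. One then forms higher-order thick pieces by amalgamating lower-order ones along shared subgraphs with ``thick overlap,'' in the sense that the overlap is infinite-diameter and quasi-convex inside each piece. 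Iterating, one takes $\JJ$ to be the collection of maximal induced subgraphs produced by this process; equivalently, one takes the maximal induced subgraphs $J \subset \Gamma$ with $G_J$ strongly algebraically thick.

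To verify Caprace's conditions: condition (1) follows because every induced $4$--cycle is a join of two edges, hence contained in some order-$0$ piece, which is in turn absorbed into a maximal thick piece $J \in \JJ$. Condition (2) requires that for $J_1 \neq J_2$ in $\JJ$, the intersection $J_1 \cap J_2$ is a complete subgraph; if instead $J_1 \cap J_2$ contained two non-adjacent vertices, one could run the amalgamation step to combine $J_1$ and $J_2$ into a single thick piece, contradicting maximality. Condition (3) is similar: if a vertex $s$ commutes with two non-adjacent vertices of $J \in \JJ$, then $s$ together with these two vertices forms an induced path of length $2$ with $s$ as the center, which spans a join $\{s\} * \{u,v\}$ that amalgamates with $J$ to produce a strictly larger thick piece, again contradicting maximality of $J$.

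For minimality of $\PP$, one uses the fact that a strongly algebraically thick group admits no non-trivial relatively hyperbolic structure (this is the whole point of strengthening thickness with the ``strong'' adjective, which ensures that thickness is inherited by any peripheral subgroup in a hypothetical relatively hyperbolic decomposition). Consequently, in any peripheral structure $\mathbb{Q}$ on $G_\Gamma$, each $G_J$ with $J \in \JJ$ must be conjugate into some $Q \in \mathbb{Q}$, which gives the required minimality. The main obstacle is the inductive thick-piece construction: one must show that the amalgamation step preserves \emph{strong} algebraic thickness, not merely algebraic thickness, and that the process terminates with a well-defined collection of maximal pieces. This requires a careful analysis of how the ``thick overlap'' condition propagates up the inductive ladder, and is the technical heart of the argument in~\cite{MR3623669}.
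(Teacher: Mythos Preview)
This theorem is not proved in the present paper; it is quoted as Theorem~I of Behrstock--Hagen--Sisto \cite{MR3623669} and used as a black box throughout (see Examples~\ref{ex1} and~\ref{ex2}). There is therefore no proof in the paper to compare your proposal against.

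That said, your sketch does follow the broad architecture of the argument in \cite{MR3623669}, so a few comments on accuracy are in order. First, a small slip: an induced $4$--cycle is the join of two \emph{non-edges} (two pairs of non-adjacent vertices), not of two edges; a join of two edges is the complete graph $K_4$. More substantively, your verification of Caprace's condition~(3) does not work as written. The subgraph $\{s\}*\{u,v\}$ is a path of length~$2$, and the associated special subgroup is $\Z_2 \times D_\infty$, which is two-ended. Its asymptotic cone is a line, every point of which is a cut point, so this group is \emph{not} thick of order~$0$ and is not an admissible piece in your amalgamation scheme. You therefore cannot conclude directly that $J\cup\{s\}$ is strongly algebraically thick, and maximality of $J$ is not contradicted by the mechanism you describe. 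The construction in \cite{MR3623669} avoids this by building $\JJ$ through an explicit iterative procedure starting from maximal \emph{wide} join subgraphs (those of the form $A*B$ with both factors containing non-adjacent vertices) and closing under a carefully specified chaining relation; condition~(3) then falls out of the way that procedure saturates, not from the one-step amalgamation you sketch.
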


\begin{rem}
In Theorem \ref{n1} we use the notion of \emph{strong algebraic thickness} which is introduced in \cite{MR3421592} and is a sufficient condition for a group to be non-hyperbolic relative to any collection of proper subgroups. We refer the reader to \cite{MR3421592} for more details. %The following theorem from \cite{MR3623669} characterizes all strongly algebraically thick right-angled Coxeter groups and it will prove useful for studying peripheral subgroups of relatively hyperbolic right-angled Coxeter groups. % (see Sections \ref{visual splittings and Bowditch boundary}, \ref{bndry Scarpet or 2sphere} and \ref{menger}).
%However, we will talk about a theorem in \cite{MR3623669} that characterizes all strongly algebraically thick right-angled Coxeter groups since it will be useful for us to study peripheral subgroups of a relatively hyperbolic right-angled Coxeter group.
\end{rem}

\subsection{Divergence of right-angled Coxeter groups and $3$--manifold groups} Roughly speaking, divergence is a quasi-isometry invariant that measures the circumference of a ball of radius $n$ as a function of $n$. We refer the reader to \cite{MR1254309} for a precise definition. In this section, we state some theorems about divergence of certain right-angled Coxeter groups and $3$-manifold groups which will be used later in this paper.

\subsubsection{Divergence of right-angled Coxeter groups}
\begin{thm}[\cite{MR3623669,Sisto}]
\label{bh}
The divergence of a right-angled Coxeter group is either exponential (if the group is relatively hyperbolic) or bounded above by a polynomial (if the group is strongly algebraically thick).
\end{thm}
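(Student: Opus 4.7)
The plan is to combine the structural dichotomy for right-angled Coxeter groups from Theorem~\ref{n1} with two general divergence estimates already available in the literature: a polynomial upper bound for strongly algebraically thick groups, and an exponential lower bound for non-trivially relatively hyperbolic groups.

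First, I would invoke Theorem~\ref{n1} of Behrstock-Hagen-Sisto to conclude that every right-angled Coxeter group $G_\Gamma$ either lies in the class of strongly algebraically thick groups, or else is hyperbolic relative to a non-trivial collection $\set{G_J}{J\in\JJ}$ of proper special subgroups (which is then a minimal peripheral structure). These two cases are mutually exclusive, since strong algebraic thickness is designed precisely to preclude any non-trivial relatively hyperbolic splitting (see \cite{MR3421592}), so the theorem reduces to establishing the correct divergence rate on each side of the dichotomy.

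In the strongly algebraically thick case, I would appeal to the quantitative result of Behrstock-Dru{\c{t}}u-Mosher from \cite{MR3421592}, which asserts that a group which is strongly algebraically thick of order $n$ has divergence bounded above by a polynomial of degree $n+1$. This gives the polynomial upper bound in the statement directly, with no further work required.

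In the relatively hyperbolic case, I would obtain the exponential estimate in two steps. The upper bound is essentially automatic from CAT(0) geometry: $G_\Gamma$ acts geometrically on its Davis complex $\Sigma_\Gamma$, which is a CAT(0) cube complex, and in any geodesic space on which a finitely generated group acts geometrically the divergence function is bounded above by an exponential function of the distance, via the standard exponential volume growth argument on balls in the Cayley graph. For the matching exponential lower bound I would cite Sisto's theorem from \cite{Sisto}, which establishes that any finitely generated group admitting a non-trivial relatively hyperbolic structure has at least exponential divergence. Combining these gives the sharp exponential rate. The step I expect to be the main obstacle, and which is really the heart of the matter, is Sisto's lower bound; everything else amounts to correctly invoking Theorem~\ref{n1} and checking that the peripheral structure it produces is genuinely non-trivial so that Sisto's hypotheses are met.
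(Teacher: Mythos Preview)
The paper does not supply its own proof of this theorem; it is stated as a preliminary result with citations to \cite{MR3623669} and \cite{Sisto}, so there is no in-paper argument to compare against. Your outline correctly reconstructs the intended argument from those sources: the dichotomy of Theorem~\ref{n1}, the polynomial upper bound for strongly thick groups from \cite{MR3421592}, and Sisto's exponential lower bound in the relatively hyperbolic case together yield the statement, and this is exactly what the citations are meant to convey.
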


Before continuing, we will take a brief detour to define a property of graphs that will be relevant to our study of right-angled Coxeter groups. Given a graph $\Gamma$, define $\Gamma^4$ as the graph whose vertices are induced 4--cycles of $\Gamma$. Two vertices in $\Gamma^4$ are adjacent if and only if the corresponding induced 4-cycles in $\Gamma$ have two nonadjacent vertices in common. 
%For a set of induced 4-cycles $S\subset \Gamma^4$, define the \emph{support} of $S$ to be all vertices in $\Gamma$ which are contained in some 4--cycle in $S$. 

\begin{defn}[Constructed from squares]\label{defn:CFS}
A graph $\Gamma$ is \emph{$\mathcal{CFS}$} if $\Gamma$ is the join $\Omega*K$ where $K$ is a (possibly empty) clique and $\Omega$ is a non-empty subgraph such that $\Omega^4$ has a connected component $T$ such that every vertex of $\Omega$ is contained in a $4$--cycle that is a vertex of $T$. If $\Gamma$ is $\mathcal{CFS}$, then we will say that the right-angled Coxeter group $G_\Gamma$ is $\mathcal{CFS}$. 
\end{defn}

\begin{thm}[\cite{MR3314816, IL, MR3801467}]
\label{dt}
Let $\Gamma$ be a finite, simplicial, connected graph which has no separating complete subgraph. Let $G_\Gamma$ be the associated right-angled Coxeter group.
\begin{enumerate}
\item The group $G_\Gamma$ has linear divergence if and only if $\Gamma$ is a join of two graphs of diameters at least $2$.
\item The group $G_\Gamma$ has quadratic divergence if and only if $\Gamma$ is $\mathcal{CFS}$ and is not a join of two graphs of diameters at least $2$.
\end{enumerate}
\end{thm}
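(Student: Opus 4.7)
The plan is to leverage Theorem \ref{bh} together with Theorem \ref{n1}. Since $\Gamma$ has no separating complete subgraph, $G_\Gamma$ is one-ended, so by Theorem \ref{bh} its divergence is either exponential (when $G_\Gamma$ is non-trivially relatively hyperbolic) or at most polynomial (when $G_\Gamma$ is strongly algebraically thick), and by Theorem \ref{n1} these alternatives are mutually exclusive. Linear and quadratic divergence both fall in the thick regime, so throughout I assume $G_\Gamma$ is strongly algebraically thick and not relatively hyperbolic, and aim to pin the polynomial degree to a graph-theoretic feature of $\Gamma$.

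For the ``if'' direction of (1), a join $\Gamma = A * B$ with $\diam(A), \diam(B) \geq 2$ yields a direct product decomposition $G_\Gamma = G_A \times G_B$ via special subgroups, and each factor contains an infinite dihedral subgroup generated by two non-adjacent vertices realizing the diameter. A direct product of two infinite finitely generated groups has linear divergence by the standard ``move purely along one factor, then purely along the other'' detour around a ball. For the ``if'' direction of (2), the $\mathcal{CFS}$ hypothesis supplies a connected subgraph $T$ of $\Gamma^4$ covering every vertex of $\Omega$, whose $4$-cycles correspond to a connected family of $2$-flats in the Davis complex $\Sigma_\Gamma$ meeting along infinite dihedral axes. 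This realizes $G_\Gamma$ as thick of order exactly $1$ in the sense of Behrstock--Dru\c{t}u--Mosher, which yields the $O(r^2)$ upper bound by chaining detours through adjacent flats; the non-join hypothesis, together with part (1), then rules out linear divergence and pins the divergence to be quadratic.

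For the ``only if'' direction of (1), assume linear divergence. I would show, following Dani--Thomas \cite{MR3314816}, that if $\Gamma$ is not a join or if it is a join whose factors have diameter at most $1$ (i.e.\ at least one factor is a clique), then one can construct a pair of geodesic rays in $\Sigma_\Gamma$ whose minimal detour has superlinear length, contradicting the assumption. The argument rests on a normal form for geodesics in $\Sigma_\Gamma$ coupled with the combinatorics of induced $4$-cycles in $\Gamma$.

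The main obstacle is the ``only if'' direction of (2): quadratic divergence implies $\mathcal{CFS}$. The non-join condition again follows from part (1), so the substantive content is that a non-$\mathcal{CFS}$ defining graph produces superquadratic divergence. This is the content of Levcovitz's theorem \cite{IL}. The strategy is to show contrapositively that in the absence of a $\mathcal{CFS}$ structure, the induced $4$-cycles (and their corresponding $2$-flats in the Davis complex) cannot be threaded together through shared dihedral axes in a ``covering'' way, and that consequently one can exhibit pairs of rays whose detours must traverse a bottleneck of linear width at distance $r$, forcing at least cubic divergence. The delicate part is the combinatorial classification of these obstructions and the explicit construction of the corresponding ray pairs with superquadratic detour lower bounds; this is the only genuinely deep ingredient, and everything else assembles from Theorem \ref{bh}, Theorem \ref{n1}, the special-subgroup decomposition of joins, and the thick-of-order-one upper bound of Behrstock--Dru\c{t}u--Mosher.
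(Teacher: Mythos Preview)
The paper does not prove Theorem~\ref{dt}; it is quoted as a preliminary result from the literature (Dani--Thomas \cite{MR3314816}, Levcovitz \cite{IL}, and \cite{MR3801467}) with no argument supplied. There is therefore no in-paper proof to compare your sketch against.

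That said, your outline is broadly the shape of the arguments in those references, with two caveats. First, in the ``if'' direction of (2) you obtain an $O(r^2)$ upper bound from thickness of order one and then invoke part (1) to exclude linear divergence; but ``not linear and at most quadratic'' does not by itself force exactly quadratic growth in the divergence equivalence (nothing you have written rules out, say, $r\log r$). What is actually needed, and what Dani--Thomas supply, is an explicit quadratic \emph{lower} bound for any non-join $\Gamma$, obtained from a concrete pair of rays in $\Sigma_\Gamma$. Second, your description of the ``only if'' direction of (2) correctly isolates Levcovitz's theorem as the substantive ingredient, but the phrase ``bottleneck of linear width forcing at least cubic divergence'' is a slogan rather than a mechanism: the proof in \cite{IL} proceeds via a hypergraph-index invariant and an inductive lower-bound scheme, not a direct ray-pair construction of the kind you suggest.
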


\subsubsection{Divergence of $3$--manifold groups}

Let $M$ be a compact, connected, orientable $3$--manifold with empty or toroidal boundary. The $3$--manifold $M$ is \emph{geometric} if its interior admits a geometric structure in the sense of Thurston. These structures are the $3$--sphere, Euclidean $3$--space, hyperbolic $3$-space, $S^{2} \times \R$, $\mathbb{H}^{2} \times \R$, $\widetilde{SL(2,\R)}$, Nil and Sol. We note that a geometric $3$--manifold $M$ is Seifert fibered if its geometry is neither Sol nor hyperbolic.
{A non-geometric $3$--manifold can be cut along tori called \emph{JSJ tori} so that each component is a hyperbolic 3-manifold or a Seifert fibered space (\cite{MR539411}, \cite{MR551744}).  Each component of this \emph{JSJ decomposition} is a \emph{piece}. The manifold $M$ is called a \emph{graph manifold} if all the pieces are Seifert fibered, otherwise it is a \emph{mixed manifold}.}

\begin{thm}[Gersten \cite{Gersten94}, Kapovich--Leeb \cite{KL98}]
\label{thm:GKL:divergence}
Let $M$ be a non-geometric manifold. Then $M$ is a graph manifold if and only if the divergence of $\pi_1(M)$ is quadratic, and $M$ is a mixed manifold if and only if the divergence of $\pi_1(M)$ is exponential.
\end{thm}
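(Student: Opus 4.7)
The plan is to analyze divergence via the JSJ decomposition of $M$. After passing to a finite cover, one may assume $\widetilde{M}$ is organized as a tree of spaces: the vertex spaces are universal covers of Seifert pieces (each quasi-isometric to $\Hyp^2\times\R$) and, in the mixed case, of hyperbolic pieces (each quasi-isometric to a truncated copy of $\Hyp^3$), glued along flats lifting the JSJ tori, with Bass--Serre tree $T$ recording the adjacencies. Both directions of the equivalence will be read off from this decomposition together with the known divergence behavior inside each type of piece.

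For the graph manifold case, every vertex space is Seifert, and I would show divergence is exactly quadratic. For the upper bound, given points $x,y$ on a sphere of radius $n$ around a basepoint $o$, the geodesic from $x$ to $y$ passes through a bounded sequence of pieces; within each piece $\simeq \Hyp^2\times\R$ one constructs a detour around the ball of radius $n/2$ of length $O(n^2)$ using the fiber $\R$--direction, and concatenating across the intervening flats yields a total detour of length $O(n^2)$. For the lower bound, one passes to the asymptotic cone of $\pi_1(M)$, which is tree-graded with two-dimensional pieces (essentially products of an $\R$--tree with a line); divergence in such a space is at least quadratic, and concretely any detour avoiding $B(o,n/2)$ must cross of order $n$ flats while making linear transverse progress in each, giving total length $\Omega(n^2)$. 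For the mixed case, the presence of at least one hyperbolic piece provides a quasi-isometrically embedded copy of truncated $\Hyp^3$ in $\widetilde{M}$; two geodesic rays from a common point in $\Hyp^3$ at positive angle separate exponentially, so picking $x,y$ deep inside such a piece forces the divergence of $\pi_1(M)$ to grow at least exponentially, while the matching upper bound is automatic since balls in any finitely presented group have at most exponentially many vertices on the boundary sphere and one can always navigate around them in exponential time.

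The principal obstacle is the quadratic lower bound in the graph manifold case. Ruling out linear divergence is straightforward from the abundance of $\Z^2$ subgroups (and Theorem \ref{dt}-style considerations), but establishing divergence is at least $\Omega(n^2)$ rather than some smaller super-linear function requires the full tree-graded asymptotic cone analysis of Kapovich--Leeb. In particular, one must show that no shortcut through a Seifert piece can avoid the quadratic cost, using that each piece's only ``non-flat'' directions come from a two-dimensional $\Hyp^2$-fiber and that all transit between adjacent pieces is funneled through the separating flats; this rigidity of the cut-flats in the tree-graded cone is the heart of the argument.
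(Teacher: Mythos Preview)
The paper does not prove this theorem; it is quoted as a result from the literature, attributed to Gersten and Kapovich--Leeb, and used as a black box in the proof of Theorem~\ref{mani}. There is therefore no proof in the paper to compare your attempt against.

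That said, your sketch is a reasonable outline of how the original arguments run. One point that deserves more care is the exponential lower bound in the mixed case: merely exhibiting a quasi-isometrically embedded copy of truncated $\Hyp^3$ inside $\widetilde{M}$ is not by itself sufficient, since a priori a detour in the ambient space could shortcut around the hyperbolic piece through neighboring Seifert pieces. What makes the argument go through is that the boundary flats of the hyperbolic piece separate $\widetilde{M}$, so there is a coarse Lipschitz retraction from $\widetilde{M}$ onto that piece (essentially nearest-point projection in the tree-of-spaces structure); this forces any ambient detour avoiding a large ball to project to a detour of comparable length inside the piece, and hence to have exponential length. This undistortion and retraction behavior is precisely the content of the Kapovich--Leeb analysis, and you should state or cite it explicitly rather than leave it implicit in the phrase ``quasi-isometrically embedded''.
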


\begin{rem}
\label{rem:seifertlinear}
Let $M$ be a compact, orientable $3$--manifold whose fundamental group has linear divergence. %We note that if $M$ has geometry of Sol then $\pi_1(M)$ has linear divergence.
Then $M$ has the geometry of Sol, or $M$ is a Seifert manifold. However, if the universal cover $\tilde{M}$ of $M$ is a fattened tree crossed with $\R$, then $M$ must be a Seifert manifold.
\end{rem}

\subsection{Two-colored graphs and their bisimilarity classes}
\label{bisimilarity}
In this section, we briefly discuss two-colored graphs and the bisimilar equivalence relation on them. These materials are introduced by Behrstock-Neumann \cite{MR2376814} to study the quasi-isometry classification of graph manifolds. In this paper, we also use two-colored graphs and the bisimilar equivalence relation to study the quasi-isometry classification of right-angled Coxeter groups $G_\Gamma$ whose defining graphs $\Gamma$ are $\mathcal{CFS}$ and satisfy Standing Assumptions.

\begin{defn}
\label{weaklycover}
A \emph{two-colored graph} is a graph $\Gamma$ with a ``coloring'' $c:V(\Gamma) \to \{b,w\}$.
A \emph{weak covering} of a two-colored graph $\Gamma'$ is a graph homomorphism $f:\Gamma \to \Gamma'$ which respects colors and has the property that for each $v\in V(\Gamma)$ and for each edge $e' \in E(\Gamma')$ at $f(v)$, there exists an $e\in E(\Gamma)$ at $v$ with $f(e)=e'$.
\end{defn}

\begin{defn}
\label{bisimilarity1}
Two-colored graphs $\Gamma_1$ and $\Gamma_2$ are \emph{bisimilar}, written $\Gamma_1\sim\Gamma_2$ if $\Gamma_1$ and $\Gamma_2$ weakly cover some common two-colored graph.
\end{defn}

\begin{prop}[Proposition 4.3 in \cite{MR2376814}]
The bisimilarity relation $\sim$ is an equivalence relation. Moreover, each equivalence class has a unique minimal element up to isomorphism.
\end{prop}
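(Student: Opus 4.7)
The plan is to prove the equivalence-relation and minimality statements by translating the covering-based definition of bisimilarity into the language of \emph{bisimulation relations}, where the algebra is routine. Specifically, for two-colored graphs $\Gamma_1$ and $\Gamma_2$, I call a color-preserving relation $R\subseteq V(\Gamma_1)\times V(\Gamma_2)$ a \emph{bisimulation} if its projections to both factors are surjective and it enjoys the two-sided back-and-forth property: whenever $(u,v)\in R$ and $uu'$ is an edge of $\Gamma_1$ there exists $v'$ with $vv'$ an edge of $\Gamma_2$ and $(u',v')\in R$, and symmetrically. The first step is to prove the equivalent characterization ``$\Gamma_1\sim\Gamma_2$ iff a bisimulation exists.'' Given a common weak cover $f_i\colon\Gamma_i\to\Gamma$, the fibred relation $R=\{(u,v):f_1(u)=f_2(v)\}$ is a bisimulation by the edge-lifting axiom of $f_1$ and $f_2$. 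Conversely, given a bisimulation $R$, form the two-colored graph whose vertex set is the quotient of $V(\Gamma_1)\sqcup V(\Gamma_2)$ by the equivalence relation generated by $R$ (viewing a pair $(u,v)\in R$ as an identification $u\sim v$), with edges inherited from the disjoint union; the back-and-forth property is exactly what is needed to verify that the two projections from $\Gamma_i$ onto this common quotient are weak coverings.

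With this reformulation, reflexivity and symmetry are immediate from the diagonal and the converse relation, respectively, and transitivity becomes the assertion that the relational composition $R_{23}\circ R_{12}$ of two bisimulations is a bisimulation. This is a two-step diagram chase through the middle graph: given $(u,w)\in R_{23}\circ R_{12}$ with witness $v$ and an edge $uu'$ in $\Gamma_1$, successively apply the back-and-forth property of $R_{12}$ at $(u,v)$ to produce $v'$ and then of $R_{23}$ at $(v,w)$ to produce $w'$. The main subtlety here is surjectivity of the composed relation, which is precisely why surjectivity in both factors was built into the definition of a bisimulation: without it, the composition of two bisimulations could fail to be total on either side.

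For the existence and uniqueness of a minimal representative, the plan is to mimic the minimization procedure for automata. Given any two-colored graph $\Gamma$, let $\equiv_\Gamma$ be the union of all bisimulations $R\subseteq V(\Gamma)\times V(\Gamma)$. A direct check shows that arbitrary unions of bisimulations are bisimulations, so $\equiv_\Gamma$ is itself a bisimulation, and by the equivalence-relation argument applied internally it is an equivalence relation on $V(\Gamma)$. The quotient $\bar\Gamma=\Gamma/\!\equiv_\Gamma$ is then bisimilar to $\Gamma$ and admits no internal bisimulation to itself other than the diagonal, so any weak covering $\bar\Gamma\to\Lambda$ with $\Lambda$ another such collapsed representative in the same class must be a bijection on vertices and hence an isomorphism of colored graphs. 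Applying this to two candidate minimal representatives $\bar\Gamma$ and $\bar\Gamma'$ of a common class, any bisimulation between them gives rise to weak coverings of each from the ``pair graph'' with vertex set $R$ and edges $(u,v)(u',v')$ whenever both $uu'$ and $vv'$ are edges; the previous observation forces these coverings to be isomorphisms, yielding $\bar\Gamma\cong\bar\Gamma'$. The main obstacle in writing up this plan in full is not the algebra of bisimulations, which is essentially formal, but the repeated bookkeeping in the quotient and pair-graph constructions needed to confirm that edges, colors, and lifting properties all behave as required.
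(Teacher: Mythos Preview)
The paper does not give its own proof of this proposition; it is quoted verbatim from Behrstock--Neumann \cite{MR2376814} and used as a black box. So there is no in-paper argument to compare against, and your task is really to produce a free-standing proof.

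Your reformulation via bisimulation relations is standard and correct, and the equivalence-relation portion goes through as you describe. There is, however, a directional slip in the uniqueness argument. Your ``previous observation'' says that a weak covering \emph{out of} a minimal graph $\bar\Gamma$ is injective (because its fibre relation is an autobisimulation, hence diagonal). But the pair graph on a bisimulation $R$ between $\bar\Gamma$ and $\bar\Gamma'$ produces weak coverings \emph{into} $\bar\Gamma$ and $\bar\Gamma'$, and minimal graphs certainly admit non-injective coverings from above (e.g.\ from the original $\Gamma$). So the sentence ``the previous observation forces these coverings to be isomorphisms'' does not follow.

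The fix is immediate with the tools you already have. Either: (i) use your common-quotient construction instead of the pair graph, so that you get weak coverings $\bar\Gamma\to\Theta\leftarrow\bar\Gamma'$; both are surjective because $R$ has surjective projections, and both are injective by your fibre-relation observation, hence isomorphisms. Or: (ii) observe that $R^{-1}\circ R$ is an autobisimulation of $\bar\Gamma$, hence the diagonal, and likewise $R\circ R^{-1}$ on $\bar\Gamma'$; this forces $R$ itself to be the graph of a color-preserving bijection, and the back-and-forth property then shows it is a graph isomorphism. Either route completes the argument.
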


\section{Bowditch boundaries of relatively hyperbolic right-angled Coxeter groups}
\label{visual splittings and Bowditch boundary}

In this section, we give descriptions of cut points and non-parabolic cut pairs of Bowditch boundaries of relatively hyperbolic right-angled Coxeter groups (see Theorems \ref{intro2} and \ref{intro3}). The Bowditch boundary of relatively hyperbolic groups with minimal peripheral structures is a quasi-isometry invariant (see Theorem~\ref{main}). So, these results can be applied, in certain cases, to differentiate two relatively hyperbolic RACGs in terms of quasi-isometry equivalence.

In \cite{MR3143594}, the third author investigates the connection between the Bowditch boundary of a relatively hyperbolic group $(G,\PP)$ and the boundary of a $\CAT(0)$ space $X$ on which $G$ acts geometrically. For relatively hyperbolic right-angled Coxeter groups, the relevant result from \cite{MR3143594} can be stated as follows:

\begin{thm}[Tran \cite{MR3143594}]
\label{cool}
Let $\Gamma$ be a finite simplicial graph. Assume that the right-angled Coxeter group $G_\Gamma$ is relatively hyperbolic with respect to a collection $\PP$ of its subgroups. Then the Bowditch boundary $\partial (G_\Gamma,\PP)$ is obtained from the $\CAT(0)$ boundary $\partial \Sigma_\Gamma$ by identifying the limit set of each peripheral left coset to a point. Moreover, this quotient map is $G_\Gamma$-equivariant.
\end{thm}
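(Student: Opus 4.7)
I would construct a $G_\Gamma$-equivariant continuous surjection $\phi \colon \partial \Sigma_\Gamma \to \partial(G_\Gamma,\PP)$ whose fibers over parabolic points are exactly the limit sets of the peripheral subcomplexes and whose other fibers are singletons; since $\partial \Sigma_\Gamma$ is compact and $\partial(G_\Gamma,\PP)$ is Hausdorff, such a $\phi$ is automatically a quotient map, which is exactly what the theorem claims. First I would set up the relevant geometry. For each $J \in \JJ$ the Davis complex $\Sigma_J$ of the special subgroup $G_J$ embeds as a convex subcomplex of $\Sigma_\Gamma$, and each peripheral left coset $gG_J$ gives the convex subcomplex $g\Sigma_J$ on which $gG_J g^{-1}$ acts cocompactly. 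Using the intersection clause in Caprace's Theorem~\ref{th1}, together with standard combinatorics of Davis complexes, I would verify the \emph{isolation property}: for every $r > 0$ there is $D(r)$ such that whenever $g_1\Sigma_{J_1} \neq g_2\Sigma_{J_2}$ are distinct translates, the intersection of their $r$-neighborhoods has diameter at most $D(r)$. In particular, the limit sets $\Lambda(g\Sigma_J) \subset \partial \Sigma_\Gamma$ of distinct translates are pairwise disjoint closed subsets.

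\textbf{Constructing $\phi$.} The orbit map identifies the $0$-skeleton of $\Sigma_\Gamma$ with $G_\Gamma$ and hence with the depth-zero part of the cusped space $X = X(G_\Gamma,\PP,S)$, yielding a $G_\Gamma$-equivariant coarsely Lipschitz map $f \colon \Sigma_\Gamma \to X$ which sends each $g\Sigma_J$ into a bounded neighborhood of the horoball $\mathcal{H}(gG_J)$. Given $\xi \in \partial\Sigma_\Gamma$ represented by a $\CAT(0)$ ray $c$, I would distinguish two cases. In the \emph{parabolic case} some eventual tail of $c$ lies in a bounded neighborhood of some translate $g\Sigma_J$, which is equivalent to $\xi \in \Lambda(g\Sigma_J)$; by the isolation property this translate is unique, and I set $\phi(\xi) = v_{gG_J}$. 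In the \emph{conical case} $c$ leaves every bounded neighborhood of every peripheral subcomplex; then $f \circ c$ is a quasi-geodesic in $X$ that escapes every horoball, so by hyperbolicity of $X$ it converges to a non-parabolic point $\eta \in \partial X = \partial(G_\Gamma,\PP)$, and I set $\phi(\xi) = \eta$. Well-definedness of $\phi$ (independence of the ray representing $\xi$) follows because equivalent rays stay at bounded Hausdorff distance, which together with the isolation property forces them into the same case and the same value.

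\textbf{Verification and main obstacle.} Equivariance of $\phi$ is immediate. Surjectivity onto parabolic points uses rays inside the corresponding $g\Sigma_J$; onto non-parabolic points, one lifts a quasi-geodesic from $X$ back through $f$ to $\Sigma_\Gamma$. The parabolic fibers are built into the construction; for a conical target $\eta$, two rays $c_1, c_2$ with $\phi([c_i]) = \eta$ have $f$-images that Hausdorff-fellow-travel in $X$ by $\delta$-hyperbolicity, and because these rays stay uniformly far from every peripheral subcomplex, this fellow-traveling lifts back to fellow-traveling in $\Sigma_\Gamma$, forcing $[c_1] = [c_2]$. The main obstacle is continuity. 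Off the limit sets continuity is not hard, because $f$ behaves as a quasi-isometric embedding on conical rays; the delicate case is continuity at a parabolic point $v_{gG_J}$, where one must show that any sequence $\xi_n \to \xi \in \Lambda(g\Sigma_J)$ in the cone topology of $\partial\Sigma_\Gamma$ forces $\phi(\xi_n) \to v_{gG_J}$ in $\partial X$. The argument would use that rays $c_n$ to $\xi_n$ from a fixed basepoint converge uniformly on compact sets to a ray $c$ to $\xi$ that lies in a bounded neighborhood of $g\Sigma_J$; thus the $c_n$ fellow-travel $g\Sigma_J$ on longer and longer initial segments, the image paths $f \circ c_n$ track $gG_J$ to greater and greater depths of the corresponding horoball before possibly escaping, and points of large depth in $\mathcal{H}(gG_J)$ converge in the Gromov topology of $X$ to $v_{gG_J}$. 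Making this last passage quantitative, and simultaneously ruling out that $\phi(\xi_n)$ could limit to a different parabolic or conical point, is the technical heart of the argument, and is precisely where the isolation property combined with hyperbolicity of the cusped space is essential.
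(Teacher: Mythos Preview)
The paper does not prove this theorem at all: it is quoted verbatim from \cite{MR3143594} (as the attribution ``[Tran \cite{MR3143594}]'' in the theorem heading indicates) and is used as a black box in the proofs of Proposition~\ref{pi1} and Theorem~\ref{SC-carpet}. So there is no ``paper's own proof'' to compare your proposal against.

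That said, your sketch is a reasonable outline of how the result is actually established in \cite{MR3143594}, and indeed of how all results of this type (Bowditch boundary as quotient of a $\CAT(0)$ boundary) are proved. Two remarks. First, you invoke Caprace's Theorem~\ref{th1} to obtain the isolation property for the peripheral subcomplexes, but this is both unnecessary and slightly circular: isolation of the peripheral cosets is a general consequence of relative hyperbolicity (it is essentially the bounded coset penetration/quasiconvexity package), and the result in \cite{MR3143594} is stated and proved for arbitrary relatively hyperbolic groups acting geometrically on $\CAT(0)$ spaces, not just right-angled Coxeter groups. Second, your identification of the ``main obstacle'' is accurate: continuity of $\phi$ at points of $\Lambda(g\Sigma_J)$ is where the real work lies, and your proposed argument via depth in the horoball is the right one, though making it rigorous requires the uniform quasiconvexity of peripheral cosets together with a careful Morse-type lemma in the cusped space.
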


We now introduce some definitions concerning defining graphs of right-angled Coxeter groups that we will use to ``visualize'' cut points and non-parabolic cut points in the Bowditch boundary.

\begin{defn}
Let $\Gamma_1$ and $\Gamma_2$ be two graphs, the \emph{join} of $\Gamma_1$ and $\Gamma_2$, denoted $\Gamma_1*\Gamma_2$, is the graph obtained by connecting every vertex of $\Gamma_1$ to every vertex of $\Gamma_2$ by an edge. If $\Gamma_2$ consists of distinct vertices $u$ and $v$, then the join $\Gamma_1*\{u,v\}$ is the \emph{suspension} of $\Gamma_1$.
\end{defn}

\begin{defn}
Let $\Gamma$ be a simplicial graph. A pair of non-adjacent vertices $\{a, b\}$ in $\Gamma$ is called a \emph{cut pair} if $\{a, b\}$ separates $\Gamma$. An induced subgraph $\Gamma_1$ of $\Gamma$ is a \emph{complete subgraph suspension} if $\Gamma_1$ is a suspension of a complete subgraph $\sigma$ of $\Gamma$. If $\sigma$ is a single vertex, then $\Gamma_1$ is a \emph{vertex suspension}.

An induced subgraph $\Gamma_1$ of $\Gamma$ is \emph{separating} if $\Gamma_1$ separates $\Gamma$. Thus, we may also consider a cut pair as a separating complete subgraph suspension which is a suspension of the empty graph.
\end{defn}

We will need the following lemma to visualize cut points in the Bowditch boundary of a relatively hyperbolic right-angled Coxeter group. 

\begin{lem}
\label{le1}
Let $\Gamma$ be a simplicial graph and $\JJ$ a collection of induced proper subgraphs of $\Gamma$. Assume that the right-angled Coxeter group $G_\Gamma$ is one-ended, hyperbolic relative to the collection $\PP=\set{G_J}{J\in\JJ}$, and suppose each subgroup in $\PP$ is one-ended. Let $J_0$ be an element in $\JJ$ such that some induced subgraph of $J_0$ separates the graph $\Gamma$. Then we can write $\Gamma=\Gamma_1\cup \Gamma_2$ such that the following conditions hold:
\begin{enumerate}
\item $\Gamma_1$, $\Gamma_2$ are both proper induced subgraphs of $\Gamma$;
\item $\Gamma_1\cap\Gamma_2$ is an induced subgraph of $J_0$.
\item For each $J$ in $\JJ$, $J$ lies completely inside either $\Gamma_1$ or $\Gamma_2$
\end{enumerate}
 %where $\Gamma_1$ is $K$ together with some of these components and $\Gamma_2$ is $K$ together with the other components. Then for each $J\neq J_0$ in $\JJ$, $J$ lies completely inside either $\Gamma_1$ or $\Gamma_2$.
\end{lem}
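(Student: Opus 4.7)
The plan is to construct $\Gamma_1, \Gamma_2$ using a carefully chosen separating vertex set inside $V(J_0)$, and then push each peripheral subgraph to one side using the complete-intersection property. The main technical input is Caprace's criterion (Theorem \ref{th1}(2)), which guarantees that $J \cap J_0$ is a complete subgraph of $\Gamma$ for every $J \in \JJ$ with $J \neq J_0$. We split into two cases depending on whether $V(J_0)$ itself separates $\Gamma$.

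In the first case, $V(J_0)$ separates $\Gamma$, so $\Gamma \setminus V(J_0)$ has at least two connected components; let $C$ be the vertex set of one of them and $D$ be the union of the vertex sets of the remaining ones. We take $\Gamma_1$ and $\Gamma_2$ to be the induced subgraphs on $V(J_0) \cup C$ and $V(J_0) \cup D$ respectively. Both are proper, $\Gamma_1 \cap \Gamma_2 = J_0$, and because $C$ and $D$ have no $\Gamma$-edges between them, $\Gamma_1 \cup \Gamma_2 = \Gamma$, so conditions (1) and (2) are immediate.

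In the second case, $V(J_0)$ does not separate $\Gamma$, so $V(\Gamma) \setminus V(J_0)$ is connected. The hypothesis supplies an induced subgraph $K_0 \subseteq J_0$ separating $\Gamma$; since $V(K_0) \subseteq V(J_0)$, the connected set $V(\Gamma) \setminus V(J_0)$ lies inside a single connected component $A'$ of $\Gamma \setminus V(K_0)$, and because $K_0$ separates $\Gamma$ there must exist at least one further component $B'$, which necessarily satisfies $B' \subseteq V(J_0) \setminus V(K_0)$. We then take $\Gamma_1$ and $\Gamma_2$ to be the induced subgraphs on $V(J_0)$ and $V(\Gamma) \setminus B'$ respectively. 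Both are proper (using that $J_0$ is proper and $B'$ is non-empty), $\Gamma_1 \cap \Gamma_2$ is the induced subgraph on $V(J_0) \setminus B'$, which is a subgraph of $J_0$, and there are no $\Gamma$-edges between $B'$ and $V(\Gamma) \setminus V(J_0)$ since these sets lie in distinct components of $\Gamma \setminus V(K_0)$, so $\Gamma_1 \cup \Gamma_2 = \Gamma$.

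The crux, and the step we expect to require the most care, is verifying condition (3). We argue by contradiction: suppose some $J \in \JJ$ with $J \neq J_0$ has vertices on both sides of the separation---vertices in both $C$ and $D$ in Case 1, or in both $B'$ and $V(\Gamma) \setminus V(J_0)$ in Case 2. Since $G_J$ is one-ended, $J$ is connected, so any path in $J$ joining two such vertices must cross $V(J_0)$ in Case 1, or $V(K_0)$ in Case 2. Because the endpoints of such a path lie outside the separator in each case, we conclude that $J \cap V(J_0) = J \cap J_0$ in Case 1, respectively $J \cap V(K_0)$ (which is contained in $J \cap J_0$) in Case 2, disconnects $J$. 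By Caprace's theorem, $J \cap J_0$ is a complete subgraph of $\Gamma$, and hence so is any subset of it; therefore $J$ would possess a separating complete subgraph, contradicting the one-endedness of $G_J$. Thus $J$ lies entirely on one side of the decomposition, and since $J_0 \subseteq \Gamma_1$ by construction in both cases, condition (3) holds for every $J \in \JJ$.
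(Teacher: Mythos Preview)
Your proof is correct and uses the same two ingredients as the paper's argument: Caprace's criterion (Theorem~\ref{th1}(2)) to ensure $J\cap J_0$ is complete, and one-endedness of $G_J$ to forbid separating complete subgraphs of $J$.

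The organizational difference is that you split into two cases according to whether $V(J_0)$ itself separates $\Gamma$, whereas the paper gives a single uniform construction: it splits $\Gamma$ along the given separating subgraph $L\subset J_0$ into $\Gamma'_1,\Gamma'_2$, then sets $\Gamma_1=\Gamma'_1$ and $\Gamma_2=\Gamma'_2\cup J_0$ (after arranging $\Gamma'_1-J_0\neq\emptyset$). The paper then has to verify that $\Gamma_2$ is induced and that $\Gamma_1\cap\Gamma_2\subset J_0$, which takes a short paragraph; your Case~1 (with $\Gamma_1\cap\Gamma_2=J_0$ on the nose) and Case~2 (with $\Gamma_1=J_0$) sidestep that verification at the cost of the case distinction. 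The check of condition~(3) is then literally the same in both proofs: for $J\neq J_0$, the set $J\cap L$ (paper) or $J\cap V(J_0)$, $J\cap V(K_0)$ (your two cases) is complete, so $J$ minus that set remains connected and hence lies on one side. Neither approach is materially simpler than the other.
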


\begin{proof}
Since each subgroup in $\PP$ is one-ended, each graph $J$ in $\JJ$ is connected and $J$ has no separating complete subgraph by Theorem 8.7.2 in \cite{MR2360474}. Let $L$ be an induced subgraph of $J_0$ that separates the graph $\Gamma$. Let $\Gamma'_1$ be $L$ together with some of the components of $\Gamma-L$, and let $\Gamma'_2$ be $L$ together with the remaining components of $\Gamma-L$. Then, $\Gamma'_1$, $\Gamma'_2$ are both proper induced subgraphs of $\Gamma$, $L=\Gamma'_1\cap\Gamma'_2$, and $\Gamma=\Gamma'_1\cup\Gamma'_2$. Since $J_0$ is a proper subgraph of $\Gamma$, $\Gamma'_1-J_0 \neq \emptyset$ or $\Gamma'_2-J_0\neq \emptyset$ (say $\Gamma'_1-J_0\neq \emptyset$).

Let $\Gamma_1=\Gamma'_1$, $\Gamma_2=\Gamma'_2\cup J_0$. Then, $\Gamma_1$ is an induced proper subgraph of $\Gamma$. We now prove that $\Gamma_2$ is also an induced proper subgraph. Choose a vertex $w\in \Gamma_1-J_0=\Gamma'_1-J_0$. Since $\Gamma_1\cap\Gamma_2=\Gamma'_1\cap(\Gamma'_2\cup J_0)\subset J_0$, the vertex $w$ does not belong to $\Gamma_2$. Therefore, $\Gamma_2$ is a proper subgraph of $\Gamma$. We now prove that $\Gamma_2$ is induced. Let $e$ be an arbitrary edge with endpoints $u$ and $v$ in $\Gamma_2$. If $e$ is an edge of $\Gamma'_2$, then $e$ is the edge of $\Gamma_2$. Otherwise, $e$ is an edge of $\Gamma'_1=\Gamma_1$, because $\Gamma=\Gamma'_1\cup\Gamma'_2$. In particular, $u$ and $v$ are also the vertices of $\Gamma_1$. Again, $\Gamma_1\cap\Gamma_2$ is a subgraph of $J_0$. Then $u$ and $v$ are vertices of $J_0$. Therefore, $e$ is an edge of $J_0$, because $J_0$ is an induced subgraph. Thus, $e$ is also an edge of $\Gamma_2$. Thus, $\Gamma_2$ is an induced subgraph. This implies that $\Gamma_1\cap\Gamma_2$ is an induced subgraph. We already checked that $\Gamma_1\cap\Gamma_2$ is a subgraph of $J_0$, and by construction $\Gamma=\Gamma_1\cup\Gamma_2$.

Now, we prove that for each $J$ in $\JJ$, $J$ lies completely inside either $\Gamma_1$ or $\Gamma_2$. By the construction, $J_0$ is a subgraph of $\Gamma_2$. Therefore, we only need to check the case where $J\neq J_0$. It suffices to show that $J$ lies completely inside either $\Gamma'_1$ or $\Gamma'_2$. By Theorem \ref{th1}, for each $J\neq J_0$ in $\JJ$ the intersection $J\cap J_0$ is empty or it is a complete subgraph of $\Gamma$. Also the intersection $J\cap L$ is an induced subgraph of $J\cap J_0$ if $J\cap L\neq \emptyset$. Therefore, $J\cap L$ is empty or it is a complete subgraph of $\Gamma$. Recall that each graph in $\JJ$ is connected and has no separating complete subgraph by our assumption that the peripheral subgroups are $1$--ended. Therefore, $J-L=J-(J\cap L)$ is connected for each $J\neq J_0$ in $\JJ$. By the construction $J-L$ lies completely inside either $\Gamma'_1$ or $\Gamma'_2$. Thus, $J$ also lies completely inside either $\Gamma'_1$ or $\Gamma'_2$. Therefore, $J$ lies completely inside either $\Gamma_1$ or $\Gamma_2$.
\end{proof}

%We can now give a proof of Theorem \ref{intro2}.%the visual description of cut points in the Bowditch boundary of relatively hyperbolic right-angled Coxeter groups.

%\begin{thm}
%\label{i1}
%Let $\Gamma$ be a simplicial graph and $\JJ$ a collection of induced proper subgraphs of $\Gamma$. Assume that the right-angled Coxeter group $G_\Gamma$ is one-ended, hyperbolic relative to the collection $\PP=\set{G_J}{J\in\JJ}$, and suppose each subgroup in $\PP$ is also one-ended. Then each parabolic point $v_{gG_{J_0}}$ is a global cut point if and only if some induced subgraph of $J_0$ separates the graph $\Gamma$.
%\end{thm}

The following theorem describes cut points in the Bowditch boundary of a relatively hyperbolic right-angled Coxeter group using the defining graph.

\begin{thm}
\label{intro2}
Let $\Gamma$ be a simplicial graph and $\JJ$ be a collection of induced proper subgraphs of $\Gamma$. Assume that the right-angled Coxeter group $G_\Gamma$ is one-ended, $G_\Gamma$ is hyperbolic relative to the collection $\PP=\set{G_J}{J\in\JJ}$, and suppose each subgroup in $\PP$ is also one-ended. Then each parabolic point $v_{gG_{J_0}}$ is a global cut point if and only if some induced subgraph of $J_0$ separates the graph $\Gamma$.
\end{thm}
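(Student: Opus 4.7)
The statement is an equivalence, and I would prove the two implications separately, using the general principle that global cut points in the Bowditch boundary of a relatively hyperbolic group with one-ended peripherals correspond to splittings of the group relative to its peripheral structure.

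For the ($\Leftarrow$) direction, suppose $K$ is an induced subgraph of $J_0$ that separates $\Gamma$, so that $\Gamma = \Gamma_1 \cup \Gamma_2$ with $\Gamma_1 \cap \Gamma_2 = K$ and no edges of $\Gamma$ between the complementary parts of $\Gamma_1$ and $\Gamma_2$. This produces a visual amalgamated splitting $G_\Gamma = G_{\Gamma_1} \ast_{G_K} G_{\Gamma_2}$ whose edge group $G_K$ lies in the peripheral $G_{J_0}$. After arranging for the splitting to be relative to $\PP$ -- refining if some $G_J$ fails to be elliptic because $J$ straddles $K$, using the one-endedness of $G_J$ to control the refinement -- standard Bowditch-type results on boundaries of relatively hyperbolic groups imply that $v_{G_{J_0}}$ is a global cut point, since the limit sets of the two vertex groups meet in the Bowditch boundary only at parabolic points of the form $v_{gG_{J_0}}$.

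For the ($\Rightarrow$) direction, suppose $v_{gG_{J_0}}$ is a global cut point; by $G_\Gamma$-equivariance, I may assume $g = 1$. Because $G_\Gamma$ is one-ended with one-ended peripherals, cut points in the Bowditch boundary must be parabolic, and the work of Bowditch on peripheral splittings identifies the cut point $v_{G_{J_0}}$ as arising from a splitting of $G_\Gamma$ relative to $\PP$ over some subgroup $H$ of the stabilizer $G_{J_0}$. I would then promote this abstract splitting to a visual one, using that splittings of right-angled Coxeter groups (over suitable subgroups) can be realized by induced separating subgraphs of the defining graph via arguments in the Davis complex. This yields an induced subgraph $K$ separating $\Gamma$, and since the edge group lies inside $G_{J_0}$, the separator $K$ can be arranged to lie inside $J_0$.

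The main obstacle I anticipate is this last visualization step: knowing merely that $G_\Gamma$ splits over \emph{some} subgroup of the peripheral $G_{J_0}$ is weaker than producing an induced separating subgraph actually contained in $J_0$. Closing this gap should rest essentially on the one-endedness hypotheses and on fine combinatorial properties of right-angled Coxeter groups; in particular, one must rule out splittings whose edge groups sit inside $G_{J_0}$ in a way not reflected by any induced subgraph of $J_0$.
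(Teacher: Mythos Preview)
Your outline is correct and matches the paper's approach closely. Two points of precision are worth noting. For the ($\Leftarrow$) direction, rather than ``refining'' the splitting when some $G_J$ straddles the separator, the paper (Lemma~\ref{le1}) instead \emph{enlarges} the separator within $J_0$: starting from a separator $L\subset J_0$, one absorbs all of $J_0$ into one side to get $\Gamma=\Gamma_1\cup\Gamma_2$ with $\Gamma_1\cap\Gamma_2\subset J_0$ and each $J\in\JJ$ entirely in one $\Gamma_i$ (using that $J\cap J_0$ is complete and $G_J$ is one-ended, so $J-L$ stays connected). This yields a single amalgam already relative to $\PP$, and Bowditch \cite{MR1837220} then gives the cut point. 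For the ($\Rightarrow$) direction, the obstacle you anticipate is handled by two black boxes: first, \cite{HM1} upgrades the existence of a parabolic cut point to an actual splitting over a subgroup $H\le G_{J_0}$; second, Mihalik--Tschantz \cite{MR2466022} promotes any splitting of a right-angled Coxeter group to a visual one, giving a separating induced $K\subset\Gamma$ with $G_K$ conjugate into $H\le G_{J_0}$. Since a special subgroup conjugate into another special subgroup is already contained in it, $K\subset J_0$ follows.
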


\begin{proof}%[Proof of Theorem \ref{intro2}]
We first assume that some induced subgraph of $J_0$ separates the graph $\Gamma$. Let $\Gamma_1$ and $\Gamma_2$ be graphs satisfying all conditions of Lemma~\ref{le1}. %By Lemma \ref{le1}, we can write $\Gamma=\Gamma_1\cup \Gamma_2$ such that the following conditions hold:
%\begin{enumerate}
%\item $\Gamma_1$, $\Gamma_2$ are both proper induced subgraphs of $\Gamma$;
%\item $K=\Gamma_1\cap\Gamma_2$ is an induced subgraph of $J_0$.
%\item For each $J$ in $\JJ$, $J$ lies completely inside either $\Gamma_1$ or $\Gamma_2$
%\end{enumerate}
This implies that $G_\Gamma=G_{\Gamma_1}*_{G_K}G_{\Gamma_2}$, $G_{\Gamma_1}\neq G_\Gamma$, and $G_{\Gamma_2}\neq G_\Gamma$. Since $J$ lies completely inside either $\Gamma_1$ or $\Gamma_2$ for each $J\in\JJ$, each peripheral subgroup in $\PP$ must be a subgroup of $G_{\Gamma_1}$ or $G_{\Gamma_2}$. Therefore, $G_\Gamma$ splits non-trivially relative to $\PP$ over the parabolic subgroup $G_K\leq G_{J_0}$. 

By the claim following Theorem 1.2 of \cite{MR1837220} the parabolic point $v_{G_{J_0}}$ labelled by $G_{J_0}$ is a global cut point of $\partial(G,\PP)$. Also, the group $G_\Gamma$ acts as a group of homeomorphisms on $\partial(G,\PP)$ and $gv_{G_{J_0}}=v_{gG_{J_0}}$. Thus, each parabolic point $v_{gG_{J_0}}$ is also a global cut point.

Now, we assume that some parabolic point $v_{gG_{J_0}}$ is a global cut point. Again, the group $G_\Gamma$ acts as a group of homeomorphisms on $\partial(G,\PP)$ and $gv_{G_{J_0}}=v_{gG_{J_0}}$. Therefore, $v_{G_{J_0}}$ is also a global cut point. Thus the maximal peripheral splitting $\mathcal{G}$ of $(G,\PP)$ is non-trivial, and $G_{J_0}$ is a vertex stabilizer which is adjacent to a component vertex group in $\mathcal{G}$ (see \cite{MR1837220} for details concerning maximal peripheral splittings). So, by Theorem 3.3 of \cite{HM1} $G_\Gamma$ splits non-trivially over a subgroup $H$ of $G_{J_0}$. Theorem 1 of \cite{MR2466022} implies that there is some induced subgraph $K$ of $\Gamma$ which separates $\Gamma$ such that $G_K$ is contained in some conjugate of $H$. Therefore, $G_K$ is also contained in some conjugate of the peripheral subgroup $G_{J_0}$. Moreover, $G_K$ and $G_{J_0}$ are both special subgroups of $G_\Gamma$. Thus, $K$ is an induced subgraph of $J_0$. (This is a standard fact, and we leave the details to the reader.)
\end{proof}

%The Corollary \ref{behr} follows directly from Theorems \ref{main} and \ref{intro2}. This corollary can be used to distinguish between quasi-isometry classes of certain relatively hyperbolic right-angled Coxeter groups.

%\begin{cor}
%Let $\Gamma_1$ and $\Gamma_2$ be simplicial graphs such that $G_{\Gamma_1}$ and $G_{\Gamma_2}$ are both one-ended. Assume that each graph $\Gamma_i$ has a peripheral structure $\JJ_i$ that consists of proper subgraphs of $\Gamma_i$ such that each subgroup in $\PP_i=\set{G_J}{J\in\JJ_i}$ is one-ended and not relatively hyperbolic. If $G_{\Gamma_1}$ and $G_{\Gamma_2}$ are quasi-isometric, then for each graph $K \in \JJ_1$ there is a graph $L \in \JJ_2$ such that $G_{K}$ and $G_{L}$ are quasi-isometric and vice versa. Moreover, if $K$ has some induced subgraph that separates $\Gamma_1$, then $L$ also has some induced subgraph that separates $\Gamma_2$.
%\end{cor}

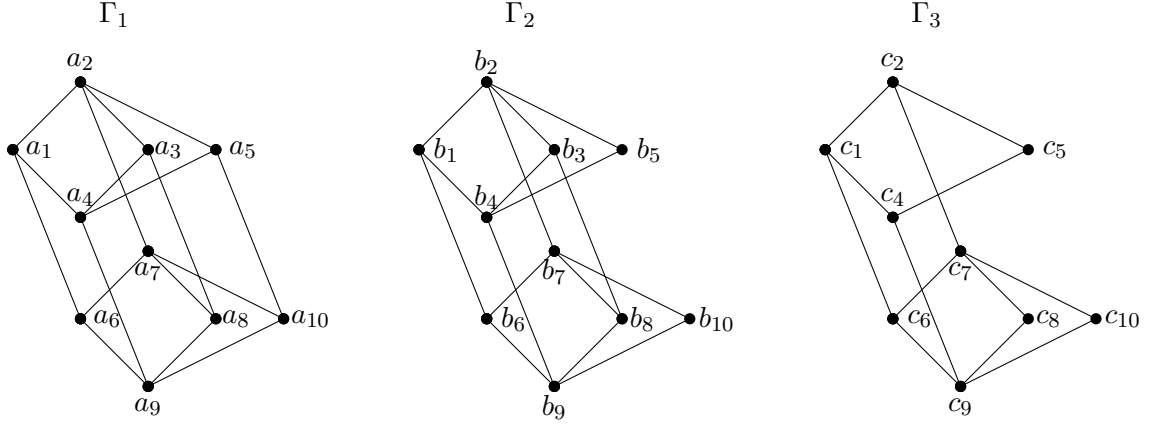
\begin{figure}
\begin{tikzpicture}[scale=0.9]

\draw (-11,1) node[circle,fill,inner sep=1.5pt, color=black](1){} -- (-10,2) node[circle,fill,inner sep=1.5pt, color=black](1){}-- (-9,1) node[circle,fill,inner sep=1.5pt, color=black](1){}-- (-10,0) node[circle,fill,inner sep=1.5pt, color=black](1){} -- (-11,1) node[circle,fill,inner sep=1.5pt, color=black](1){};

\draw (-10,2) node[circle,fill,inner sep=1.5pt, color=black](1){} -- (-8,1) node[circle,fill,inner sep=1.5pt, color=black](1){}-- (-10,0) node[circle,fill,inner sep=1.5pt, color=black](1){};

\draw (-10,-1.5) node[circle,fill,inner sep=1.5pt, color=black](1){} -- (-9,-0.5) node[circle,fill,inner sep=1.5pt, color=black](1){}-- (-8,-1.5) node[circle,fill,inner sep=1.5pt, color=black](1){}-- (-9,-2.5) node[circle,fill,inner sep=1.5pt, color=black](1){} -- (-10,-1.5) node[circle,fill,inner sep=1.5pt, color=black](1){};

\draw (-9,-0.5) node[circle,fill,inner sep=1.5pt, color=black](1){} -- (-7,-1.5) node[circle,fill,inner sep=1.5pt, color=black](1){}-- (-9,-2.5) node[circle,fill,inner sep=1.5pt, color=black](1){};

\draw (-11,1) node[circle,fill,inner sep=1.5pt, color=black](1){} -- (-10,-1.5) node[circle,fill,inner sep=1.5pt, color=black](1){};
\draw (-10,2) node[circle,fill,inner sep=1.5pt, color=black](1){} -- (-9,-0.5) node[circle,fill,inner sep=1.5pt, color=black](1){};
\draw (-9,1) node[circle,fill,inner sep=1.5pt, color=black](1){} -- (-8,-1.5) node[circle,fill,inner sep=1.5pt, color=black](1){};
\draw (-10,0) node[circle,fill,inner sep=1.5pt, color=black](1){} -- (-9,-2.5) node[circle,fill,inner sep=1.5pt, color=black](1){};
\draw (-8,1) node[circle,fill,inner sep=1.5pt, color=black](1){} -- (-7,-1.5) node[circle,fill,inner sep=1.5pt, color=black](1){};

\node at (-10.6,1) {$a_1$}; \node at (-10,2.3) {$a_2$}; \node at (-8.7,1) {$a_3$}; \node at (-10,0.3) {$a_4$}; \node at (-7.6,1) {$a_5$};
\node at (-9.6,-1.5) {$a_6$}; \node at (-9,-0.8) {$a_7$}; \node at (-7.7,-1.5) {$a_8$}; \node at (-9,-2.8) {$a_9$}; \node at (-6.6,-1.5) {$a_{10}$};

\node at (-9.5,3) {$\Gamma_1$};

\draw (-5,1) node[circle,fill,inner sep=1.5pt, color=black](1){} -- (-4,2) node[circle,fill,inner sep=1.5pt, color=black](1){}-- (-3,1) node[circle,fill,inner sep=1.5pt, color=black](1){}-- (-4,0) node[circle,fill,inner sep=1.5pt, color=black](1){} -- (-5,1) node[circle,fill,inner sep=1.5pt, color=black](1){};

\draw (-4,2) node[circle,fill,inner sep=1.5pt, color=black](1){} -- (-2,1) node[circle,fill,inner sep=1.5pt, color=black](1){}-- (-4,0) node[circle,fill,inner sep=1.5pt, color=black](1){};

\draw (-4,-1.5) node[circle,fill,inner sep=1.5pt, color=black](1){} -- (-3,-0.5) node[circle,fill,inner sep=1.5pt, color=black](1){}-- (-2,-1.5) node[circle,fill,inner sep=1.5pt, color=black](1){}-- (-3,-2.5) node[circle,fill,inner sep=1.5pt, color=black](1){} -- (-4,-1.5) node[circle,fill,inner sep=1.5pt, color=black](1){};

\draw (-3,-0.5) node[circle,fill,inner sep=1.5pt, color=black](1){} -- (-1,-1.5) node[circle,fill,inner sep=1.5pt, color=black](1){}-- (-3,-2.5) node[circle,fill,inner sep=1.5pt, color=black](1){};

\draw (-5,1) node[circle,fill,inner sep=1.5pt, color=black](1){} -- (-4,-1.5) node[circle,fill,inner sep=1.5pt, color=black](1){};
\draw (-4,2) node[circle,fill,inner sep=1.5pt, color=black](1){} -- (-3,-0.5) node[circle,fill,inner sep=1.5pt, color=black](1){};
\draw (-3,1) node[circle,fill,inner sep=1.5pt, color=black](1){} -- (-2,-1.5) node[circle,fill,inner sep=1.5pt, color=black](1){};
\draw (-4,0) node[circle,fill,inner sep=1.5pt, color=black](1){} -- (-3,-2.5) node[circle,fill,inner sep=1.5pt, color=black](1){};
%\draw (-2,1) node[circle,fill,inner sep=1.5pt, color=black](1){} -- (-1,-1.5) node[circle,fill,inner sep=1.5pt, color=black](1){};

\node at (-4.6,1) {$b_1$}; \node at (-4,2.3) {$b_2$}; \node at (-2.7,1) {$b_3$}; \node at (-4,0.3) {$b_4$}; \node at (-1.6,1) {$b_5$};
\node at (-3.6,-1.5) {$b_6$}; \node at (-3,-0.8) {$b_7$}; \node at (-1.7,-1.5) {$b_8$}; \node at (-3,-2.8) {$b_9$}; \node at (-0.6,-1.5) {$b_{10}$};

\node at (-3.5,3) {$\Gamma_2$};

\draw (1,1) node[circle,fill,inner sep=1.5pt, color=black](1){} -- (2,2) node[circle,fill,inner sep=1.5pt, color=black](1){};
\draw (1,1) node[circle,fill,inner sep=1.5pt, color=black](1){} -- (2,0) node[circle,fill,inner sep=1.5pt, color=black](1){};

\draw (2,2) node[circle,fill,inner sep=1.5pt, color=black](1){} -- (4,1) node[circle,fill,inner sep=1.5pt, color=black](1){}-- (2,0) node[circle,fill,inner sep=1.5pt, color=black](1){};

\draw (2,-1.5) node[circle,fill,inner sep=1.5pt, color=black](1){} -- (3,-0.5) node[circle,fill,inner sep=1.5pt, color=black](1){}-- (4,-1.5) node[circle,fill,inner sep=1.5pt, color=black](1){}-- (3,-2.5) node[circle,fill,inner sep=1.5pt, color=black](1){} -- (2,-1.5) node[circle,fill,inner sep=1.5pt, color=black](1){};

\draw (3,-0.5) node[circle,fill,inner sep=1.5pt, color=black](1){} -- (5,-1.5) node[circle,fill,inner sep=1.5pt, color=black](1){}-- (3,-2.5) node[circle,fill,inner sep=1.5pt, color=black](1){};

\draw (1,1) node[circle,fill,inner sep=1.5pt, color=black](1){} -- (2,-1.5) node[circle,fill,inner sep=1.5pt, color=black](1){};
\draw (2,2) node[circle,fill,inner sep=1.5pt, color=black](1){} -- (3,-0.5) node[circle,fill,inner sep=1.5pt, color=black](1){};
%\draw (3,1) node[circle,fill,inner sep=1.5pt, color=black](1){} -- (4,-1.5) node[circle,fill,inner sep=1.5pt, color=black](1){};
\draw (2,0) node[circle,fill,inner sep=1.5pt, color=black](1){} -- (3,-2.5) node[circle,fill,inner sep=1.5pt, color=black](1){};
%\draw (-2,1) node[circle,fill,inner sep=1.5pt, color=black](1){} -- (-1,-1.5) node[circle,fill,inner sep=1.5pt, color=black](1){};

\node at (1.4,1) {$c_1$}; \node at (2,2.3) {$c_2$}; %\node at (3.3,1) {$c_3$};
\node at (2,0.3) {$c_4$}; \node at (4.4,1) {$c_5$};
\node at (2.4,-1.5) {$c_6$}; \node at (3,-0.8) {$c_7$}; \node at (4.3,-1.5) {$c_8$}; \node at (3,-2.8) {$c_9$}; \node at (5.4,-1.5) {$c_{10}$};

\node at (2.5,3) {$\Gamma_3$};

\end{tikzpicture}

\caption{The three groups $G_{\Gamma_1}$, $G_{\Gamma_2}$, and $G_{\Gamma_3}$ are pairwise not quasi-isometric because the Bowditch boundaries with respect to their minimal peripheral structures are pairwise not homeomorphic.}
\label{afirst}
\end{figure}

Now, we discuss a few examples related to cut points in Bowditch boundaries of relatively hyperbolic right-angled Coxeter groups. These examples illustrate an application of Theorem \ref{intro2} to the problem of quasi-isometry classification of right-angled Coxeter groups.

\begin{exmp}
\label{ex1}
Let $\Gamma_1$, $\Gamma_2$, and $\Gamma_3$ be the graphs in Figure \ref{afirst}. Observe that all groups $G_{\Gamma_i}$ are one-ended. We will prove that groups $G_{\Gamma_1}$, $G_{\Gamma_2}$, and $G_{\Gamma_3}$ are not pairwise quasi-isometric by investigating their minimal peripheral structures.

In $\Gamma_1$, let $K_1^{(1)}$ and $K_1^{(2)}$ be induced subgraphs generated by $\{a_1,a_2,a_3,a_4,a_5\}$ and $\{a_6,a_7,a_8,a_9,a_{10}\}$, respectively. Observe that $\Gamma_1$ has only six induced 4-cycles which are not subgraphs of $K_1^{(1)}$ and $K_1^{(2)}$. Denote these cycles by $L_1^{(i)}$ ($i=1,2,\cdots, 6$). Let $\JJ_1$ be the set of all graphs $L_1^{(i)}$ and $K_1^{(j)}$. By Theorems \ref{th1} and \ref{n1}, $\JJ_1$ is the minimal peripheral structure of $\Gamma_1$. Moreover, no induced subgraph of a graph in $\JJ_1$ separates $\Gamma_1$. Therefore by Theorem \ref{intro2}, $G_{\Gamma_1}$ is hyperbolic relative to the collection $\PP_1=\set{G_J}{J\in\JJ_1}$ and the Bowditch boundary $\partial (G_{\Gamma_1},\PP_1)$ has no global cut point.

Similarly, let $K_2^{(1)}$ and $K_2^{(2)}$ be the induced subgraphs of $\Gamma_2$ generated by $\{b_1,b_2,b_3,b_4,b_5\}$ and $\{b_6,b_7,b_8,b_9,b_{10}\}$, respectively. Observe that $\Gamma_2$ has only four induced 4-cycles, denoted $L_2^{(i)}$ ($i=1,2,\cdots, 4$), such that each of them is not a subgraph of $K_2^{(1)}$ and $K_2^{(2)}$. Let $\JJ_2$ be the set of all graphs $L_2^{(i)}$ and $K_2^{(j)}$. Then by Theorems \ref{th1} and \ref{n1}, $\JJ_2$ is the minimal peripheral structure of $\Gamma_2$. Moreover, $K_2^{(1)}$ and $K_2^{(2)}$ are the only graphs in $\JJ_2$ which contain induced subgraphs that separate $\Gamma_2$. Therefore, $G_{\Gamma_2}$ is hyperbolic relative to the collection $\PP_2=\set{G_J}{J\in\JJ_2}$, the Bowditch boundary $\partial (G_{\Gamma_2},\PP_2)$ has global cut points and each of them is labelled by some left coset of $G_{K_2^{(1)}}$ or $G_{K_2^{(2)}}$ by Theorem \ref{intro2}.

Finally, let $K_3^{(1)}$ be an induced subgraph of $\Gamma_3$ generated by $\{c_6,c_7,c_8,c_9,c_{10}\}$. The graph $\Gamma_3$ has only three induced 4-cycles, denoted $L_3^{(i)}$ ($i=1,2,3$), such that each of them is not a subgraph of $K_3^{(1)}$. Assume that $L_3^{(1)}$ is the induced 4-cycle generated by $\{c_1,c_2,c_5,c_4\}$. Let $\JJ_3$ be the set of all graphs $L_3^{(i)}$ and $K_3^{(1)}$. Again, by Theorem \ref{th1} and \ref{n1} we have that $\JJ_3$ is the minimal peripheral structure of $\Gamma_3$. Moreover, $K_3^{(1)}$ and $L_3^{(1)}$ are the only graphs in $\JJ_3$ which contain induced subgraphs that separate $\Gamma_3$. Therefore, $G_{\Gamma_3}$ is hyperbolic relative to the collection $\PP_3=\set{G_J}{J\in\JJ_3}$, the Bowditch boundary $\partial (G_{\Gamma_3},\PP_3)$ has global cut points and each of them is labelled by some left coset of $G_{K_3^{(1)}}$ or $G_{L_3^{(1)}}$ by Theorem \ref{intro2}.

Note that all the groups in $\PP_i$ are one-ended. The Bowditch boundary $\partial (G_{\Gamma_1},\PP_1)$ has no global cut point, but the Bowditch boundaries $\partial (G_{\Gamma_2},\PP_2)$ and $\partial (G_{\Gamma_3},\PP_3)$ do. So, $G_{\Gamma_1}$ cannot be quasi-isometric to $G_{\Gamma_2}$ or $G_{\Gamma_3}$. Additionally, the Bowditch boundary $\partial (G_{\Gamma_3},\PP_3)$ has global cut points labelled by a left coset of $G_{L_3^{(1)}}$. Meanwhile, no global cut point of the Bowditch boundary $\partial (G_{\Gamma_2},\PP_2)$ is labelled by the left coset of a peripheral subgroup which is quasi-isometric to $G_{L_3^{(1)}}$. Therefore, $G_{\Gamma_2}$ and $G_{\Gamma_3}$ are not quasi-isometric.
\end{exmp}

In the remainder of this section, we describe non-parabolic cut pairs in Bowditch boundaries of relatively hyperbolic right-angled Coxeter groups in terms of their defining graphs.

\begin{prop}
\label{pi1}
Let $\Gamma$ be a simplicial graph and $\JJ$ be a collection of induced proper subgraphs of $\Gamma$. Assume that the right-angled Coxeter group $G_\Gamma$ is one-ended, hyperbolic relative to the collection $\PP=\set{G_J}{J\in\JJ}$, and suppose each subgroup in $\PP$ is also one-ended. If $\Gamma$ has a separating complete subgraph suspension whose non-adjacent vertices do not lie in the same subgraph $J\in \JJ$, then the $\CAT(0)$ boundary $\partial \Sigma_\Gamma$ has a cut pair and the Bowditch boundary $\partial(G_\Gamma,\PP)$ has a non-parabolic cut pair.
\end{prop}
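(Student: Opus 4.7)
The plan is to realize the desired cut pair as the endpoints of the axis of the loxodromic element $ab$ coming from a splitting dual to the separating suspension.

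\emph{Splitting setup.} Since $\sigma*\{a,b\}$ separates $\Gamma$, we write $\Gamma=\Gamma_1\cup\Gamma_2$ with $\Gamma_1\cap\Gamma_2=\sigma*\{a,b\}$ and each $\Gamma_i\supsetneq\sigma*\{a,b\}$. Because $\{a,b\}$ is not contained in any $J\in\JJ$, for each such $J$ the intersection $J\cap(\sigma*\{a,b\})$ is a complete subgraph; since $G_J$ is one-ended, $J$ has no separating complete subgraph, so $J\setminus(\sigma*\{a,b\})$ is connected and hence $J$ lies entirely in one of $\Gamma_1,\Gamma_2$. Thus $G_\Gamma=G_{\Gamma_1}*_{G_{\sigma*\{a,b\}}}G_{\Gamma_2}$ is a splitting of $G_\Gamma$ relative to $\PP$ over the virtually cyclic group $G_{\sigma*\{a,b\}}\cong G_\sigma\times D_\infty$. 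Inside $\Sigma_\Gamma$, the convex subcomplex $\Sigma_{\sigma*\{a,b\}}$ is isometric to $\Sigma_\sigma\times\R$ (a finite cube times a line), so its $\CAT(0)$ boundary consists of exactly two points $\{\xi,\eta\}$, the endpoints of the axis $\ell$ of the loxodromic element $ab$.

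\emph{Cut pair in $\partial\Sigma_\Gamma$.} Associated to the splitting is a Bass--Serre tree $T$ with a coarse $G_\Gamma$--equivariant projection $\pi\colon\Sigma_\Gamma\to T$, and the edge $e_0$ of $T$ stabilized by $G_{\sigma*\{a,b\}}$ separates $T$ into two infinite subtrees $T_1,T_2$. Let $A_i\subseteq\partial\Sigma_\Gamma$ be the set of limits of rays whose $\pi$-image eventually lies in $T_i$. Standard geodesic-tracking gives closed subsets with $\partial\Sigma_\Gamma=A_1\cup A_2$ and $A_1\cap A_2=\partial\Sigma_{\sigma*\{a,b\}}=\{\xi,\eta\}$. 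Since each $T_i$ is infinite, $\pi^{-1}(T_i)$ contains geodesic rays crossing infinitely many walls and therefore contributes limit points outside $\{\xi,\eta\}$, so $A_i\setminus\{\xi,\eta\}\ne\emptyset$. Hence $\{\xi,\eta\}$ is a cut pair in $\partial\Sigma_\Gamma$.

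\emph{Non-parabolicity.} Next we show $\xi,\eta$ are not parabolic in $\partial(G_\Gamma,\PP)$. Suppose for contradiction $\xi\in\Lambda(gG_J)$ for some peripheral coset. By Theorem~\ref{cool}, $\xi$ maps to the parabolic point of $\partial(G_\Gamma,\PP)$ with stabilizer $gG_Jg^{-1}$, and since $ab$ fixes $\xi$ we have $ab\in gG_Jg^{-1}$. The involutions $a,b$ each invert $ab$, hence normalize $\langle ab\rangle$; as $\langle ab\rangle$ is infinite and maximal parabolic subgroups of relatively hyperbolic groups are almost malnormal, we conclude $a(gG_Jg^{-1})a^{-1}=gG_Jg^{-1}$, so $a\in gG_Jg^{-1}$, and similarly $b\in gG_Jg^{-1}$. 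Then $a$ is a reflection of $G_\Gamma$ lying in the conjugate special subgroup $gG_Jg^{-1}$, so it is $G_\Gamma$--conjugate to a vertex of $J$; since distinct vertex generators of a right-angled Coxeter group are never conjugate, $a$ is itself a vertex of $J$, and likewise for $b$, forcing $\{a,b\}\subseteq J$ and contradicting the hypothesis.

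\emph{Descent and main obstacle.} By Theorem~\ref{cool}, the Bowditch boundary is the quotient of $\partial\Sigma_\Gamma$ collapsing each peripheral limit set $\Lambda(gG_J)$ to a point. Each such limit set is connected (as $G_J$ is one-ended) and does not contain $\xi$ or $\eta$ (by the previous step), so it lies entirely in $A_1$ or entirely in $A_2$. The decomposition $A_1\cup A_2$ therefore descends to a decomposition of $\partial(G_\Gamma,\PP)$ whose overlap is precisely $\{\bar\xi,\bar\eta\}$, exhibiting the desired non-parabolic cut pair. The chief difficulty is the non-parabolicity step: deducing that $a$ and $b$ themselves must each lie in a common peripheral subgraph from the a priori weaker statement that $ab$ is contained in a conjugate of a peripheral subgroup. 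This relies on the almost-malnormality of maximal parabolic subgroups in relatively hyperbolic groups together with the Coxeter-theoretic fact that distinct vertex generators of a right-angled Coxeter group are never conjugate.
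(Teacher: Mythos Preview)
Your argument is correct and takes a genuinely different route from the paper. The paper enlarges the peripheral structure by adjoining the ``maximal'' suspension $\bar K$ to $\JJ$, verifies Caprace's conditions so that $(G_\Gamma,\bar\PP)$ with $\bar\PP=\PP\cup\{G_{\bar K}\}$ is again relatively hyperbolic, and then invokes the already-proved cut-\emph{point} criterion (Theorem~\ref{intro2}) to produce a global cut point $v_{G_{\bar K}}$ in $\partial(G_\Gamma,\bar\PP)$; pulling back through Tran's quotient map (Theorem~\ref{cool}) yields the cut pair $\{w_1,w_2\}=\Lambda(G_{\bar K})$ in $\partial\Sigma_\Gamma$, and non-parabolicity with respect to $\PP$ is then read off from the disjointness of peripheral limit sets in the enlarged structure $\bar\PP$. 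You instead work entirely with the original $\PP$: you obtain the cut pair in $\partial\Sigma_\Gamma$ directly from the convex separating subcomplex $\Sigma_{\sigma*\{a,b\}}$ and the associated Bass--Serre tree, and you prove non-parabolicity by an explicit argument combining almost-malnormality of maximal parabolics with the Coxeter-theoretic fact that distinct simple reflections of a right-angled Coxeter group are never conjugate. The paper's approach is slicker in that it recycles Theorem~\ref{intro2} and avoids any direct $\CAT(0)$ boundary analysis, but it pays the price of having to check Caprace's axioms for $\bar\JJ$ (which is why it must pass to the \emph{maximal} suspension $\bar K$); your approach is more hands-on but self-contained, and in fact your non-parabolicity step supplies a justification that the paper leaves implicit. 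One small point worth tightening: your ``coarse projection $\pi$ and standard geodesic-tracking'' can be made precise simply by noting that $\Sigma_{\sigma*\{a,b\}}$ is a convex subcomplex separating $\Sigma_\Gamma$, so every geodesic ray from a basepoint in it lies in the closure of one complementary component, giving $A_1\cap A_2=\partial\Sigma_{\sigma*\{a,b\}}=\{\xi,\eta\}$ immediately.
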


\begin{proof}
Let $K$ be a separating complete subgraph suspension of $\Gamma$ whose non-adjacent vertices $u$ and $v$ do not both lie in the same subgraph $J\in \JJ$. Let $T$ be the set of all vertices of $\Gamma$ which are both adjacent to $u$ and $v$. Then $T$ is a vertex set of a complete subgraph $\sigma$ of $\Gamma$. Otherwise, the two vertices $u$ and $v$ both lie in the same 4-cycle. Thus, $u$ and $v$ lie in the same subgraph $J\in\JJ$, a contradiction.

Let $\bar{K}=\sigma*\{u,v\}$. We can easily verify the following properties of $\bar{K}$.
\begin{enumerate}
\item For each $J\in\JJ$ the intersection $\bar{K}\cap J$ is empty or a complete subgraph.
\item No vertex outside $\bar{K}$ is adjacent to the unique pair of nonadjacent vertices $\{u,v\}$ of $\bar{K}$.
\item $K$ is an induced subgraph of $\bar{K}$.
\end{enumerate}
Therefore, the collection $\bar{\JJ}=\JJ\cup \{\bar{K}\}$ satisfies all the conditions of Theorem \ref{th1}, which implies that $G_\Gamma$ is hyperbolic relative to the collection $\bar{\PP}=\set{G_J}{J\in\bar{\JJ}}$.

Using an argument similar to that of Lemma \ref{le1}, we can write $\Gamma=\Gamma_1\cup \Gamma_2$ such that the following conditions hold:
\begin{enumerate}
\item $\Gamma_1$, $\Gamma_2$ are both proper induced subgraphs of $\Gamma$;
\item $\Gamma_1\cap\Gamma_2$ is an induced subgraph $L$ of $\bar{K}$.
\item For each $J$ in $\bar{\JJ}$, $J$ lies completely inside either $\Gamma_1$ or $\Gamma_2$
\end{enumerate}
Therefore, we can prove that the Bowditch boundary $\partial (G,\bar{\PP})$ has a global cut point $v_{G_{\bar{K}}}$ stabilized by the subgroup $G_{\bar{K}}$ by using an argument similar to the one in Theorem \ref{intro2}.

By Theorem \ref{cool}, the Bowditch boundary $\partial (G_\Gamma,\bar{\PP})$ is obtained from the $\CAT(0)$ boundary $\partial \Sigma_\Gamma$ by identifying the limit set of each peripheral left coset of a subgroup in $\bar{\PP}$ to a point. Let $f$ be this quotient map. Since $G_{\bar{K}}$ is two-ended, its limit set consists of two points $w_1$ and $w_2$ in $\partial \Sigma_\Gamma$. Therefore, $f(w_1)=f(w_2)=v_{G_{\bar{K}}}$ and $f\bigl(\partial\Sigma_\Gamma-\{w_1,w_2\}\bigr)=\partial(G_\Gamma,\bar{\PP})-\{v_{G_{\bar{K}}}\}$. Since $\partial(G_\Gamma,\bar{\PP})-\{v_{G_{\bar{K}}}\}$ is not connected, the space $\partial\Sigma_\Gamma-\{w_1,w_2\}$ is also not connected. This implies that $\{w_1,w_2\}$ is a cut pair of the $\CAT(0)$ boundary $\partial\Sigma_\Gamma$.

Again, by Theorem \ref{cool} the Bowditch boundary $\partial (G_\Gamma,\PP)$ is obtained from the $\CAT(0)$ boundary $\partial \Sigma_\Gamma$ by identifying the limit set of each peripheral left coset of a subgroup in $\PP$ to a point. Let $h$ be this quotient map. The two points $w_1$ and $w_2$ do not lie in limit sets of peripheral left cosets of subgroups in $\PP$. Therefore, $h(w_1)\neq h(w_2)$ and they are non-parabolic points in the Bowditch boundary $\partial (G_\Gamma,\PP)$. Moreover, $h\bigl(\partial\Sigma_\Gamma-\{w_1,w_2\}\bigr)=\partial(G_\Gamma,\PP)-\{h(w_1),h(w_2)\}$ and the limit set of each peripheral left coset of a subgroup in $\PP$ lies completely inside $\partial\Sigma_\Gamma-\{w_1,w_2\}$.

We observe that for any two points $s_1, s_2 \in \partial \Sigma_\Gamma -\{w_1,w_2\}$ satisfying $h(s_1)=h(s_2)$ the two points $s_1$ and $s_2$ both lie in some limit set $C$ of a peripheral left coset of a subgroup in $\PP$. Also, each subgroup in $\PP$ is one-ended, so $C$ is connected. Therefore, $s_1$ and $s_2$ lie in the same connected component of $\partial \Sigma_\Gamma -\{w_1,w_2\}$. This implies that if $U$ and $V$ are different components of $\partial \Sigma_\Gamma -\{w_1,w_2\}$, then $h(U)\cap h(V)=\emptyset$. Therefore, $\partial(G_\Gamma,\PP)-\{h(w_1),h(w_2)\}$ is not connected. This implies that $\{h(w_1),h(w_2)\}$ is a non-parabolic cut pair of the Bowditch boundary $\partial(G_\Gamma,\PP)$.
\end{proof}

The following theorem describes non-parabolic cut pairs in Bowditch boundaries of relatively hyperbolic right-angled Coxeter groups in terms of their defining graphs.

\begin{thm}
\label{intro3}
Let $\Gamma$ be a simplicial graph and $\JJ$ be a collection of induced proper subgraphs of $\Gamma$. Assume that the right-angled Coxeter group $G_\Gamma$ is one-ended, $G_\Gamma$ is hyperbolic relative to the collection $\PP=\set{G_J}{J\in\JJ}$, and suppose each subgroup in $\PP$ is one-ended. If the Bowditch boundary $\partial (G_\Gamma,\PP)$ has a non-parabolic cut pair, then $\Gamma$ has a separating complete subgraph suspension. Moreover, if $\Gamma$ has a separating complete subgraph suspension whose non-adjacent vertices do not lie in the same subgraph $J \in \JJ$, then the Bowditch boundary $\partial (G_\Gamma,\PP)$ has a non-parabolic cut pair.
\end{thm}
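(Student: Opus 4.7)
The plan is to relate non-parabolic cut pairs in the Bowditch boundary to visual splittings of $G_\Gamma$ over 2-ended subgroups, in parallel to how Theorem \ref{intro2} relates parabolic cut points to separating subgraphs contained in a peripheral. The two main tools I would use are: (1) a Bowditch/Papasoglu--Swenson-type correspondence which converts a non-parabolic cut pair in $\partial(G_\Gamma,\PP)$ into an essential splitting of $G_\Gamma$ relative to $\PP$ over a 2-ended subgroup; and (2) a visual splitting theorem for right-angled Coxeter groups (after Mihalik--Tschantz and its relative extensions by Dani--Thomas), which promotes such a splitting to one induced by a \emph{separating} induced subgraph $\Lambda \subset \Gamma$ whose associated RACG $G_\Lambda$ is 2-ended. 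The classification of 2-ended RACGs then pins $\Lambda$ down: $G_\Lambda$ is 2-ended precisely when it contains an infinite dihedral direct factor coming from two non-adjacent vertices $a,b$ together with a clique $K \subset \Lk(a) \cap \Lk(b)$, forcing $\Lambda = \{a,b\} * K$.

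For the forward direction, start with a non-parabolic cut pair $\{\xi,\eta\} \subset \partial(G_\Gamma,\PP)$. Because each peripheral is one-ended (so its limit set has no global cut point), one can realize $\{\xi,\eta\}$ as the loxodromic fixed pair of some infinite order $g \in G_\Gamma$ whose maximal 2-ended containing subgroup is not conjugate into any peripheral. Applying tool (1) produces a non-trivial splitting of $G_\Gamma$ relative to $\PP$ with 2-ended edge group, and tool (2) then promotes it to a visual splitting with 2-ended edge group $G_\Lambda$; by the classification above, $\Lambda$ is a separating complete subgraph suspension.

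Conversely, assume $\Gamma$ has a separating complete subgraph suspension $\Lambda = \{a,b\} * K$ with $a$ and $b$ not lying in a common $J \in \JJ$. Then $G_\Lambda = \langle a,b\rangle \times G_K$ is the direct product of the infinite dihedral group with a finite 2-group, hence 2-ended, and a decomposition $\Gamma \setminus \Lambda = \Gamma_1' \sqcup \Gamma_2'$ yields a visual amalgamated product $G_\Gamma = G_{\Gamma_1} *_{G_\Lambda} G_{\Gamma_2}$ where $\Gamma_i = \Gamma_i' \cup \Lambda$. The hypothesis that $a,b$ do not share a peripheral subgraph ensures no $G_J$ is broken, so this is a splitting relative to $\PP$. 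The element $g = ab$ is then loxodromic with fixed pair $\{g^{+\infty}, g^{-\infty}\}$; using the Bass--Serre tree of the decomposition and the action of $G_\Gamma$ on its boundary, this pair is shown to separate $\partial(G_\Gamma,\PP)$, giving the desired non-parabolic cut pair.

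The main obstacle is the forward direction: extracting a group-theoretic splitting from a topological cut pair in the relatively hyperbolic setting requires a careful version of the Bowditch/Papasoglu--Swenson machinery, with particular attention to keeping the peripheral structure intact. A secondary subtle point is the visual splitting / 2-ended classification step, where one must show that every essential splitting of $G_\Gamma$ over a 2-ended subgroup (relative to $\PP$) is equivalent to a visual one induced by a clique-suspension subgraph of $\Gamma$. Both steps depend essentially on the Coxeter structure of $G_\Gamma$ rather than on relative hyperbolicity alone.
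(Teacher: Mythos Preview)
Your forward direction is essentially the paper's argument, though you add unnecessary steps and omit one that matters. The paper does not realize the cut pair as a loxodromic fixed pair, nor does it need the splitting to be relative to $\PP$: it simply observes that a non-parabolic point in the cut pair is a non-parabolic local cut point, applies a theorem of Haulmark (the paper's reference [HM1]) to obtain a splitting of $G_\Gamma$ over some 2-ended subgroup $H$, then invokes Mihalik--Tschantz to replace it by a visual splitting over $G_K$ with $K\subset\Gamma$ separating. One-endedness of $G_\Gamma$ forces $G_K$ to be infinite, and $G_K$ being conjugate into the 2-ended $H$ forces $G_K$ to be 2-ended, hence $K$ is a complete-subgraph suspension. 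Note that before invoking [HM1] the paper must rule out that $\partial(G_\Gamma,\PP)$ is a circle; this follows because a circle boundary would force $G_\Gamma$ to be virtually a surface group with 2-ended peripherals, contradicting the one-endedness hypothesis on $\PP$. You should include this step.

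Your converse takes a genuinely different route. The paper does \emph{not} argue directly from the splitting via a Bass--Serre tree action on the boundary. Instead it (i) enlarges the suspension $K$ to $\bar K=\sigma*\{u,v\}$ where $\sigma$ is the full clique of common neighbours of $u,v$, (ii) enlarges the peripheral structure to $\bar\PP=\PP\cup\{G_{\bar K}\}$ and checks Caprace's conditions so that $(G_\Gamma,\bar\PP)$ is again relatively hyperbolic, (iii) uses the argument of Theorem~\ref{intro2} to see that the parabolic point $v_{G_{\bar K}}$ is a global cut point in $\partial(G_\Gamma,\bar\PP)$, and then (iv) invokes Tran's quotient description of the Bowditch boundary from the $\CAT(0)$ boundary $\partial\Sigma_\Gamma$ to transport this information: the 2-point limit set $\{w_1,w_2\}$ of $G_{\bar K}$ in $\partial\Sigma_\Gamma$ separates $\partial\Sigma_\Gamma$, and under the quotient $\partial\Sigma_\Gamma\to\partial(G_\Gamma,\PP)$ these two points survive (they lie in no peripheral limit set) and their image is a non-parabolic cut pair, using connectedness of the peripheral limit sets to see that distinct components stay distinct. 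Your approach is more conceptual but leaves the crucial implication ``splitting relative to $\PP$ over a 2-ended subgroup $\Rightarrow$ non-parabolic cut pair'' as a black box; the paper's $\CAT(0)$-boundary argument makes this step completely explicit and exhibits the cut pair concretely as the endpoints of the bi-infinite $\langle u,v\rangle$-axis.
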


\begin{proof}%[Proof of Theorem \ref{intro3}]
Since $G_\Gamma$ is one-ended, $G_\Gamma$ does not split over a finite group by the characterization of one-endedness in Stallings's theorem. Therefore by Proposition 10.1 in \cite{MR2922380} the Bowditch boundary $\partial (G_\Gamma,\PP)$ is connected. If the Bowditch boundary $\partial (G_\Gamma,\PP)$ is a circle, then $G_\Gamma$ is virtually a surface group, and the peripheral subgroups are the boundary subgroups of that surface by Theorem 6B in \cite {MR961162}. This is a contradiction because each peripheral subgroup is one-ended. Therefore, the Bowditch boundary $\partial (G_\Gamma,\PP)$ is not a circle. We now assume that the Bowditch boundary $\partial (G_\Gamma,\PP)$ has a non-parabolic cut pair $\{u,v\}$. Then $u$ is obviously a non-parabolic local cut point. Therefore by Theorem 1.1 in \cite{HM1}, $G_\Gamma$ splits over a two-ended subgroup $H$.

%We remark that there is an alternative way to prove that $G_\Gamma$ splits over a two-ended subgroup $H$ as follows. First, we use a similar strategy as in Proposition \ref{pi1} to prove that the $\CAT(0)$ boundary $\partial \Sigma_\Gamma$ of the Davis complex $\Sigma_\Gamma$ has a cut pair. Then we apply Theorem 51 in \cite{MR2545250} to conclude that $G_\Gamma$ splits over a two-ended subgroup $H$ or $G_\Gamma$ is virtually a surface group. Since $G_\Gamma$ is relatively hyperbolic with respect to a collection of proper one-ended subgroups, we can rule out the later.

Since $G_\Gamma$ splits over a two-ended subgroup $H$, there is an induced subgraph $K$ of $\Gamma$ which separates $\Gamma$ such that $G_K$ is contained in some conjugate of $H$ by Theorem 1 in \cite{MR2466022}. Because the group $G_\Gamma$ is one-ended, the group $G_K$ is two-ended. This implies that $K$ is a complete subgraph suspension. The remaining conclusion is obtained from Proposition \ref{pi1}.
\end{proof}

Now, we discuss a few examples related to cut points in Bowditch boundaries of relatively hyperbolic right-angled Coxeter groups. These examples illustrate an application of Theorem \ref{intro3} to the problem of quasi-isometry classification of right-angled Coxeter groups.

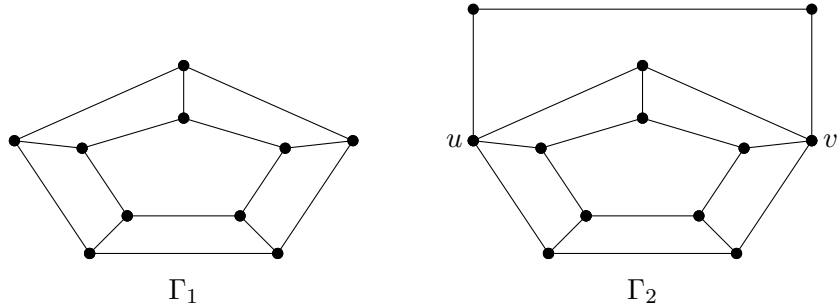
\begin{figure}
\begin{tikzpicture}[scale=0.5]

\draw (4,0) node[circle,fill,inner sep=1.5pt, color=black](1){} -- (2,3) node[circle,fill,inner sep=1.5pt, color=black](1){} -- (6.5,5) node[circle,fill,inner sep=1.5pt, color=black](1){}-- (11,3) node[circle,fill,inner sep=1.5pt, color=black](1){} -- (9,0) node[circle,fill,inner sep=1.5pt, color=black](1){} -- (4,0) node[circle,fill,inner sep=1.5pt, color=black](1){};

\draw (5,1) node[circle,fill,inner sep=1.5pt, color=black](1){} -- (3.8,2.8) node[circle,fill,inner sep=1.5pt, color=black](1){} -- (6.5,3.6) node[circle,fill,inner sep=1.5pt, color=black](1){}-- (9.2,2.8) node[circle,fill,inner sep=1.5pt, color=black](1){} -- (8,1) node[circle,fill,inner sep=1.5pt, color=black](1){} -- (5,1) node[circle,fill,inner sep=1.5pt, color=black](1){};

\draw (4,0) node[circle,fill,inner sep=1.5pt, color=black](1){} -- (5,1) node[circle,fill,inner sep=1.5pt, color=black](1){};
\draw (2,3) node[circle,fill,inner sep=1.5pt, color=black](1){} -- (3.8,2.8) node[circle,fill,inner sep=1.5pt, color=black](1){};
\draw (6.5,5) node[circle,fill,inner sep=1.5pt, color=black](1){} -- (6.5,3.6) node[circle,fill,inner sep=1.5pt, color=black](1){};
\draw (11,3) node[circle,fill,inner sep=1.5pt, color=black](1){} --  (9.2,2.8) node[circle,fill,inner sep=1.5pt, color=black](1){};
\draw (9,0) node[circle,fill,inner sep=1.5pt, color=black](1){} --  (8,1) node[circle,fill,inner sep=1.5pt, color=black](1){};

\node at (6.5,-1) {$\Gamma_1$};

\draw (16.2,0) node[circle,fill,inner sep=1.5pt, color=black](1){} -- (14.2,3) node[circle,fill,inner sep=1.5pt, color=black](1){} -- (18.7,5) node[circle,fill,inner sep=1.5pt, color=black](1){}-- (23.2,3) node[circle,fill,inner sep=1.5pt, color=black](1){} -- (21.2,0) node[circle,fill,inner sep=1.5pt, color=black](1){} -- (16.2,0) node[circle,fill,inner sep=1.5pt, color=black](1){};

\draw (17.2,1) node[circle,fill,inner sep=1.5pt, color=black](1){} -- (16,2.8) node[circle,fill,inner sep=1.5pt, color=black](1){} -- (18.7,3.6) node[circle,fill,inner sep=1.5pt, color=black](1){}-- (21.4,2.8) node[circle,fill,inner sep=1.5pt, color=black](1){} -- (20.2,1) node[circle,fill,inner sep=1.5pt, color=black](1){} -- (17.2,1) node[circle,fill,inner sep=1.5pt, color=black](1){};

\draw (16.2,0) node[circle,fill,inner sep=1.5pt, color=black](1){} --  (17.2,1) node[circle,fill,inner sep=1.5pt, color=black](1){};
\draw (14.2,3) node[circle,fill,inner sep=1.5pt, color=black](1){} --  (16,2.8) node[circle,fill,inner sep=1.5pt, color=black](1){};
\draw (18.7,5) node[circle,fill,inner sep=1.5pt, color=black](1){} --  (18.7,3.6) node[circle,fill,inner sep=1.5pt, color=black](1){};
\draw (23.2,3) node[circle,fill,inner sep=1.5pt, color=black](1){} --  (21.4,2.8) node[circle,fill,inner sep=1.5pt, color=black](1){};
\draw (21.2,0) node[circle,fill,inner sep=1.5pt, color=black](1){} --  (20.2,1) node[circle,fill,inner sep=1.5pt, color=black](1){};

\draw (14.2,3) node[circle,fill,inner sep=1.5pt, color=black](1){} -- (14.2,6.5) node[circle,fill,inner sep=1.5pt, color=black](1){} -- (23.2,6.5) node[circle,fill,inner sep=1.5pt, color=black](1){}-- (23.2,3) node[circle,fill,inner sep=1.5pt, color=black](1){};

\node at (18.7,-1) {$\Gamma_2$};
\node at (13.7,3) {$u$};
\node at (23.7,3) {$v$};

\end{tikzpicture}

\caption{Graph $\Gamma_1$ has no separating complete subgraph suspension while graph $\Gamma_2$ has cut pair $(u,v)$ such that $u$ and $v$ do not lie in the same 4-cycle.}
\label{asecond}
\end{figure}

\begin{exmp}
\label{ex2}
Let $\Gamma_1$ and $\Gamma_2$ be the graphs in Figure \ref{asecond}. Then $G_{\Gamma_1}$ and $G_{\Gamma_2}$ are both one-ended. Let $\JJ_1$ and $\JJ_2$ be the sets of all induced 4-cycles of $\Gamma_1$ and $\Gamma_2$, respectively. By Theorems \ref{th1} and \ref{n1}, the collection $\JJ_i$ is the minimal peripheral structure of $\Gamma_i$ for each $i$. Also, subgroups in each $\PP_i=\set{G_J}{J\in\JJ_i}$ are virtually $\ZZ^2$ and thus one-ended. Moreover, for each $i$ no induced subgraph of a graph in $\JJ_i$ separates $\Gamma_i$. Thus by Theorem \ref{intro2}, both Bowditch boundaries $\partial (G_{\Gamma_1},\PP_1)$ and $\partial (G_{\Gamma_2},\PP_2)$ have no cut points. So in this case, we cannot use cut points to differentiate $G_{\Gamma_1}$ and $G_{\Gamma_2}$ up to quasi-isometry. However, the graph $\Gamma_1$ has no separating complete subgraph suspension, so by Theorem \ref{intro3} the Bowditch boundary $\partial (G_{\Gamma_1},\PP_1)$ has no non-parabolic cut pair. Meanwhile, $\Gamma_2$ has a cut pair $(u,v)$ such that $u$ and $v$ do not lie in the same subgraph in $\JJ_2$. Again, by Theorem \ref{intro3} the Bowditch boundary $\partial (G_{\Gamma_2},\PP_2)$ has a non-parabolic cut pair. By Theorem \ref{main}, $G_{\Gamma_1}$ and $G_{\Gamma_2}$ are not quasi-isometric.
\end{exmp}

\section{Geometric structure of right-angled Coxeter groups with planar defining nerves}
\label{nice3}

In this section, we study the coarse geometry of right-angled Coxeter groups with planar defining nerves. We first analyze the tree structure of planar flag complexes. Then we use this structure to study the relatively hyperbolic structure, group divergence, and manifold structure of right-angled Coxeter groups with planar nerves. Finally, we give a complete quasi-isometry classification of right-angled Coxeter groups which are virtually graph manifold groups.

\begin{defn}
A simplicial complex $\Delta$ is called \emph{flag} if any complete subgraph of the $1$-skeleton of $\Delta$ is the 1-skeleton of a simplex of $\Delta$. Let $\Gamma$ be a finite simplicial graph. The \emph{flag complex} of $\Gamma$ is the flag complex with 1-skeleton $\Gamma$. A simplicial subcomplex $B$ of a simplicial complex $\Delta$ is called \emph{full} if every simplex in $\Delta$ whose vertices all belong to $B$ is itself in $B$.

The flag complex of $\Delta$ is \emph{planar} if it can be embedded into the $2$-dimensional sphere $\field{S}^2$. From now on every time we consider a flag complex it will be as a subspace of the $2$-dimensional sphere $\field{S}^2$. 
\end{defn}

The following lemma provides necessary and sufficient conditions for right-angled Coxeter groups of planar nerves to be one-ended.

\begin{lem}
\label{lem:one ended}
Let $\Delta\subset \field{S}^2$ be a non-simplex planar flag complex. %Assume that $\Delta$ contains a separating simplex. Then $\Delta$ must contain a separating vertex or a separating edge. In particular,
Then $G_{\Delta}$ is one-ended if and only if $\Delta$ is connected has no separating vertex and no separating edge.
\end{lem}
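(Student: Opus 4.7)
The plan is to reduce the lemma to Davis's characterization of one-endedness of Coxeter groups (Theorem 8.7.2 in \cite{MR2360474}), which states that $G_\Delta$ is one-ended if and only if $\Delta$ is connected, is not a simplex, and has no \emph{separating simplex}---equivalently, $\Delta^{(1)}$ has no complete subgraph whose removal disconnects it. Because $\Delta$ is planar and flag, it is at most $2$-dimensional, so $\Delta^{(1)}$ contains no copy of $K_4$ and every complete subgraph is a vertex, an edge, or a triangle. The forward direction of the lemma is then immediate, and the reverse direction reduces to the following claim: if $\Delta \subset \field{S}^2$ is a planar flag complex that is connected, has no separating vertex, and has no separating edge, then $\Delta$ has no separating triangle.

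To prove this claim I would argue by contradiction. Suppose $T = \{a,b,c\}$ is a separating triangle. By the flag hypothesis, $T$ is a $2$-simplex of $\Delta$, so it is realised as a closed topological disk $\bar D_1 \subset \field{S}^2$ bounded by the cycle $a$--$b$--$c$--$a$. The complementary disk $\bar D_2$ contains the entire $1$-skeleton $\Delta^{(1)}$, and in particular every vertex of $\Delta$ other than $a,b,c$ lies in $D_2^\circ$. Pick $u,v$ in distinct components of $\Delta^{(1)} - T$. The hypothesis that neither single vertices nor adjacent pairs separate $\Delta^{(1)}$ implies that no subset of $\{a,b,c\}$ of size $\leq 2$ is a vertex cut of $\Delta^{(1)}$ (note that $ab$, $bc$, $ca$ are edges of $\Delta^{(1)}$ since $abc$ is a triangle), so $\{a,b,c\}$ is a minimum $u$-$v$ cut. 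By Menger's theorem, there are three internally vertex-disjoint paths $P_a, P_b, P_c$ from $u$ to $v$, each passing through exactly one of $a,b,c$.

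The topological crux is a planarity argument with the theta graph $\Theta := P_a \cup P_b \cup P_c$. Since $\Delta^{(1)} \subset \bar D_2$, so does $\Theta$, and $\Theta$ meets $\partial D_2$ only at the three points $a,b,c$ (the path $P_x$ meets $\partial D_2$ only at $x$, as no edge of $\Delta^{(1)}$ can have both endpoints on $\partial T$ other than the three triangle edges themselves). As a graph embedded in $\field{S}^2$, $\Theta$ divides $\field{S}^2$ into three open cells, each bounded by exactly two of the three paths; denote them $C_{ab}, C_{bc}, C_{ca}$ with the obvious convention. The open disk $D_1^\circ$ is connected and disjoint from $\Theta$, so it lies in a unique such cell, say $C_{ab}$. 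Its closure then satisfies $\overline{D_1^\circ} \subseteq \overline{C_{ab}} = C_{ab} \cup P_a \cup P_b$. But $c \in \partial T \subset \overline{D_1^\circ}$, while $c$ is an internal vertex of $P_c$ and therefore $c \notin P_a \cup P_b$ (by internal vertex disjointness) and $c \notin C_{ab}$ (as $c$ lies on $\Theta$). This contradiction completes the proof.

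The main obstacle I anticipate is the topological bookkeeping in the final step: one must carefully confirm that the $2$-simplex $T$ contains no other vertices or edges of $\Delta^{(1)}$ in its interior, that each Menger path touches $\partial D_2$ only at its distinguished vertex, and that the theta graph genuinely divides $\field{S}^2$ into three cells whose closures each contain only two of the three vertices $a,b,c$. Once this planar picture is set up, Davis's and Menger's theorems deliver the rest.
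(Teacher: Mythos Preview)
Your reduction to Davis's characterization and the identification of the key claim---that a separating triangle forces a separating vertex or edge---is exactly the paper's strategy; the paper simply asserts this step as ``easy to see'' without details.  The topological endgame with the theta graph is correct and nicely executed.

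There is, however, a genuine gap in the Menger step.  From ``no subset of $\{a,b,c\}$ of size $\le 2$ is a cut'' you conclude that $\{a,b,c\}$ is a \emph{minimum} $u$--$v$ cut, but you have only shown it is \emph{minimal} (no proper subset is a cut).  The hypothesis rules out cut vertices and adjacent cut pairs, but not non-adjacent cut pairs; such a pair $\{x,y\}$ disjoint from $\{a,b,c\}$ could in principle separate $u$ from $v$, making the minimum $u$--$v$ cut size $2$ and Menger would then yield only two disjoint paths.  So the three internally disjoint paths through $a,b,c$ are not yet justified.

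The fix is easy and keeps your theta-graph argument intact.  From the fact that $\{a,b\}$, $\{b,c\}$, $\{c,a\}$ do not separate, one checks that each of $a,b,c$ has a neighbour in every component of $\Delta^{(1)}-\{a,b,c\}$.  Pick two components $U,V$.  Take a spanning tree of $U$ and attach $a,b,c$ as leaves via one edge each; the minimal subtree spanning $\{a,b,c\}$ is a tripod $Y_U$ with centre $m_U\in U$ and three legs ending at $a,b,c$, with all internal vertices in $U$.  Build $Y_V$ similarly.  Then $\Theta=Y_U\cup Y_V$ is a theta graph with poles $m_U,m_V$ and one arc through each of $a,b,c$, touching $\partial D_2$ only at those three vertices---precisely what your final paragraph needs.  (An even shorter alternative: contract $U$ and $V$ to single vertices; the resulting planar graph contains $K_5$ minus an edge with $abc$ still bounding a face, which is impossible since every planar embedding of $K_5-e$ has only triangular faces of the form $u^*xy$ or $v^*xy$.)
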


\begin{proof}
Assume that our flag complex $\Delta$ is not a simplex. It is known that the right-angled Coxeter group $G_\Delta$ is one-ended if and only if $\Delta$ is connected with no separating simplex. When $\Delta\subset \field{S}^2$ is a connected flag complex and $\Delta$ has a separating simplex, we observe that $\Delta$ has a separating vertex or a separating edge. 
\end{proof}

The following lemma provides necessary and sufficient conditions for when a one-ended right-angled Coxeter group with planar nerve splits over a two-ended subgroup.

\begin{lem}
\label{lem:splitoverZ}
Let $\Delta\subset \field{S}^2$ be a non-simplex connected flag complex with no separating vertex and no separating edge. Then $G_{\Delta}$ splits over a two-ended subgroup if and only if $\Delta$ has a cut pair or a separating induced path of length $2$.
\end{lem}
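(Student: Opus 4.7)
The plan is to prove each direction of the biconditional by establishing its contrapositive. The easier direction is immediate: if $\Delta$ has a cut pair $\{a, b\}$, writing $\Gamma = \Gamma_1 \cup \Gamma_2$ as a union of two proper induced subgraphs meeting in $\{a, b\}$ gives a non-trivial visual splitting of $G_\Delta$ over $G_{\{a, b\}} \cong D_\infty$, which is two-ended. If instead $\{a, c, b\}$ is a separating induced path of length $2$, the analogous decomposition yields a splitting over $G_{\{a, c, b\}} \cong D_\infty \times \mathbb{Z}/2$ (since $c$ commutes with both $a$ and $b$ while $a \not\sim b$), again two-ended.

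For the reverse direction, I would assume $G_\Delta$ splits over a two-ended subgroup $H$. By the visual splitting theorem of Mihalik--Tschantz \cite[Theorem 1]{MR2466022}, some induced subgraph $K \subset \Gamma$ separates $\Gamma$ with $G_K$ conjugate into $H$. By Lemma \ref{lem:one ended}, $G_\Delta$ is one-ended, so $K$ is not a clique and $G_K$ is infinite; being contained in a conjugate of $H$, it is itself two-ended. The classification of two-ended right-angled Coxeter groups then forces $K = K_0 * \{a, b\}$, where $a \not\sim b$ and $K_0$ is a (possibly empty) clique with every vertex adjacent to both $a$ and $b$. Planarity of $\Delta \subset \field{S}^2$ gives $\dim \Delta \le 2$, so cliques in $\Gamma$ have at most three vertices; this forces $|K_0| \le 2$, since $|K_0| \ge 3$ would make $K_0 \cup \{a\}$ a $4$-clique and produce a forbidden $3$-simplex. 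The subcases $|K_0| = 0$ and $|K_0| = 1$ immediately yield a cut pair and a separating induced path of length $2$, respectively.

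The main obstacle is the subcase $|K_0| = 2$, say $K = \{a, b, c_1, c_2\}$ with $c_1 \sim c_2$. I would argue by contradiction that one of $\{a, b\}$, $\{a, c_1, b\}$, or $\{a, c_2, b\}$ is already a separating subgraph. The crucial planar input is that the edge $c_1 c_2$ already supports the two $2$-simplices $\{a, c_1, c_2\}$ and $\{b, c_1, c_2\}$, and since any edge in a $2$-dimensional simplicial complex embedded in $\field{S}^2$ supports at most two $2$-simplices, no vertex outside $\{a, b\}$ is adjacent to both $c_1$ and $c_2$. Suppose none of the three candidate subgraphs above is separating; denote the components of $\Gamma \setminus K$ by $A$ and $B$ and set $V' := V(\Gamma) \setminus K$. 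A direct analysis of how removing different subsets of $K$ merges components forces: (i) for each $i \in \{1, 2\}$ the set $V'_{c_i}$ of $V'$-neighbors of $c_i$ meets both $A$ and $B$ (otherwise $\{a, c_i, b\}$ separates), and (ii) at least one of $V'_a, V'_b$ meets both $A$ and $B$ (otherwise, since $\Gamma \setminus \{c_1, c_2\}$ is connected by the no-separating-edge hypothesis, $\{a, b\}$ would be a cut pair).

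To complete the contradiction, I plan to exploit the planarity of the links at $a, b, c_1, c_2$. Since $\Delta \subset \field{S}^2$, each link $\Lk_\Delta(v)$ embeds in a circle, giving a cyclic order on the $\Gamma$-neighbors of $v$ and forcing any two cyclically-consecutive $\Gamma$-adjacent vertices within $V'_{c_i}$ to lie in the same component of $\Gamma \setminus K$. Combining the cyclic orderings at $c_1$ and $c_2$, the interleaving between $A$-vertices and $B$-vertices demanded by (i), and the constraint $V'_{c_1} \cap V'_{c_2} = \emptyset$, I would locate a face of $\Gamma$ inside the disk of $\field{S}^2$ bounded by the $4$-cycle $a c_1 b c_2$ whose boundary cycle either contains an $A$-to-$B$ edge (contradicting the definition of $A$ and $B$) or connects $A$ to $B$ through at most three $K$-vertices, yielding a strictly smaller separating subgraph. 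The last step requires a detailed case analysis on which $K$-vertices appear on the face boundary; this is where I expect the most technical work.
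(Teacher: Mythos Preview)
Your approach matches the paper's: both reduce (via Mihalik--Tschantz plus the classification of two-ended special subgroups) to a separating full subcomplex $K=K_0*\{a,b\}$ with $K_0$ a clique, and then use planarity together with the no-separating-vertex/edge hypothesis to force the conclusion; the paper simply declares this last reduction ``clear'' without giving details.

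Your $|K_0|=2$ subcase is far simpler than your link/cyclic-order plan suggests, and your conditions (i)--(ii) are not quite the right intermediate statements. The clean argument is topological: since $c_1c_2$ lies in exactly the two $2$-simplices $ac_1c_2$ and $bc_1c_2$, the full subcomplex $K$ is a closed disk in $\field{S}^2$ with boundary the $4$-cycle $a\,c_1\,b\,c_2$, so $\field{S}^2\setminus K$ is a \emph{single} open disk $U$ containing every component of $\Delta\setminus K$. If one component $A$ reaches all four boundary vertices, an arc in $\bar A$ from $c_1$ to $c_2$ through $U$ separates $\bar U$ into an $a$-side and a $b$-side, so any other component misses one of $a,b$; thus some component of $\Delta\setminus K$ is attached along at most three of the four vertices. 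That three-element attaching set is then either $\{a,b,c_i\}$ (a separating induced path of length $2$), or $\{a,b\}$ (a cut pair), or an edge/triangle, the latter two being ruled out by hypothesis and by the argument of Lemma~\ref{lem:one ended}. This replaces your projected face-boundary case analysis entirely.
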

\begin{proof}
By Lemma~\ref{lem:one ended}, the right-angled Coxeter group $G_{\Delta}$ is one-ended. Hence, $G_\Delta$ splits over a two-ended subgroup if and only if $\Delta$ contains a separating full subcomplex which is a cut pair or a suspension of a simplex. Since $\Delta\subset \field{S}^2$ is a connected flag complex with no separating vertex and no separating edge, it is clear that if $\Delta$ has a separating full subcomplex which is a suspension of a simplex then $\Delta$ has a cut pair or a separating induced path of length $2$.
\end{proof}

We now restrict our attention to right-angled Coxeter groups which are one-ended, non-hyperbolic, and not virtually $\Z^2$. Therefore, we need the following assumptions on defining nerves of right-angled Coxeter groups.

\begin{sassu}The planar flag complex $\Delta\subset \field{S}^2$:
\begin{enumerate}
    \item is connected with no separating vertices and no separating edges ($G_\Delta$ is one-ended);
    \item contains at least one induced $4$-cycle ($G_\Delta$ is not hyperbolic);
    \item is not a $4$-cycle and not a cone of a $4$-cycle ($G_\Delta$ is not virtually $\Z^2$).
\end{enumerate}
\end{sassu}

\subsection{Decomposition of planar flag complexes}

\label{tree1}

In \cite{NT}, the second and thirds authors describe a tree-like decomposition for triangle-free planar graphs. This decomposition has ``nice'' vertex graphs and is one of the key ideas of \cite{NT}. The techniques of \cite{NT} also apply to planar flag complexes. Starting with some terminology, we review the basics of this construction in the setting of planar flag complexes.

\begin{defn}
%\label{de1}
An induced 4--cycle $\sigma$ of a flag complex $\Delta$ \emph{separates $\Delta$} if $\Delta-\sigma$ has at least two components.
\end{defn}

%We now talk about a stronger notion of ``separating induced 4-cycle'' of planar flag. This notion depends on the choice of embedding map of the ambient flag complex into the sphere $\field{S}^2$ and the notion is based on Jordan Curve Theorem.

\begin{defn}
\label{de1}
Let $\Delta\subset \field{S}^2$ be a planar flag complex. An induced $4$--cycle $\sigma$ \emph{strongly separates} $\Delta$ if $\Delta$ has non-empty intersection with both components of $\field{S}^2-\sigma$.
The complex $\Delta$ is called \emph{prime} if $\Delta$ satisfies the following conditions:
\begin{enumerate}
    \item $\Delta$ is connected with no separating vertex and no separating edge;
    \item $\Delta$ is not a $4$--cycle but contains at least one induced $4$-cycle;
    \item $\Delta$ has no strongly separating induced $4$-cycle (i.e. each induced $4$-cycle bounds a region of $\field{S}^2-\Delta$).
\end{enumerate}
\end{defn}

\begin{exmp}
The suspension of a non-triangle graph (not necessarily connected) with $3$ vertices is prime.
\end{exmp}

\begin{rem}
The notion of strongly separating in Definition~\ref{de1} depends on the choice of embedding map of the ambient flag complex into the sphere $\field{S}^2$. This notion is also based on the Jordan Curve Theorem that $\field{S}^2-\sigma$ has two components.
\end{rem}

\begin{figure}
\begin{tikzpicture}[scale=0.5]
\draw (0,0) node[circle,fill,inner sep=1.5pt, color=black](1){} -- (2,2) node[circle,fill,inner sep=1.5pt, color=black](1){}-- (4,0) node[circle,fill,inner sep=1.5pt, color=black](1){}-- (2,-2) node[circle,fill,inner sep=1.5pt, color=black](1){}-- (0,0) node[circle,fill,inner sep=1.5pt, color=black](1){};\draw (2,2) node[circle,fill,inner sep=1.5pt, color=black](1){} -- (2,0) node[circle,fill,inner sep=1.5pt, color=black](1){}-- (2,-2) node[circle,fill,inner sep=1.5pt, color=black](1){};

\filldraw[fill=red!40!white, draw=black] (6,0) node[circle,fill,inner sep=1.5pt, color=black](1){} -- (8,2) node[circle,fill,inner sep=1.5pt, color=black](1){} -- (8,0)node[circle,fill,inner sep=1.5pt, color=black](1){} -- (6,0)node[circle,fill,inner sep=1.5pt, color=black](1){};
\filldraw[fill=red!40!white, draw=black] (6,0) node[circle,fill,inner sep=1.5pt, color=black](1){} -- (8,-2) node[circle,fill,inner sep=1.5pt, color=black](1){} -- (8,0)node[circle,fill,inner sep=1.5pt, color=black](1){} -- (6,0)node[circle,fill,inner sep=1.5pt, color=black](1){};
\draw (8,2) node[circle,fill,inner sep=1.5pt, color=black](1){} -- (10,0) node[circle,fill,inner sep=1.5pt, color=black](1){}-- (8,-2) node[circle,fill,inner sep=1.5pt, color=black](1){};

\filldraw[fill=red!40!white, draw=black] (12,0) node[circle,fill,inner sep=1.5pt, color=black](1){} -- (14,2) node[circle,fill,inner sep=1.5pt, color=black](1){} -- (14,0)node[circle,fill,inner sep=1.5pt, color=black](1){} -- (12,0)node[circle,fill,inner sep=1.5pt, color=black](1){};
\filldraw[fill=red!40!white, draw=black] (12,0) node[circle,fill,inner sep=1.5pt, color=black](1){} -- (14,-2) node[circle,fill,inner sep=1.5pt, color=black](1){} -- (14,0)node[circle,fill,inner sep=1.5pt, color=black](1){} -- (12,0)node[circle,fill,inner sep=1.5pt, color=black](1){};
\filldraw[fill=red!40!white, draw=black] (14,0) node[circle,fill,inner sep=1.5pt, color=black](1){} -- (14,2) node[circle,fill,inner sep=1.5pt, color=black](1){} -- (16,0)node[circle,fill,inner sep=1.5pt, color=black](1){} -- (14,0)node[circle,fill,inner sep=1.5pt, color=black](1){};
\filldraw[fill=red!40!white, draw=black] (14,0) node[circle,fill,inner sep=1.5pt, color=black](1){} -- (14,-2) node[circle,fill,inner sep=1.5pt, color=black](1){} -- (16,0)node[circle,fill,inner sep=1.5pt, color=black](1){} -- (14,0)node[circle,fill,inner sep=1.5pt, color=black](1){};

\end{tikzpicture}
\caption{All possible special prime flag complexes}
\label{special}
\end{figure}
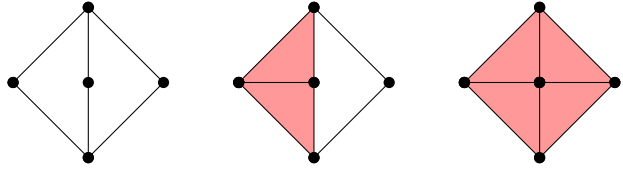

\begin{defn}
A prime flag complex $\Delta$ is called \emph{special} if it the suspension of a non-triangle graph (not necessarily connected) with 3 vertices. We remark that there are only three possible special prime flag complexes (see Figure~\ref{special}). %If a prime flag complex $\Delta$ is not special we will call $\Delta$ is \emph{non-special} %Otherwise, $\Delta$ is \emph{non-special} prime flag complex 
%(see Figure~\ref{non-special} for some non-special prime flag complexes).
\end{defn}

\begin{figure}
\begin{tikzpicture}[scale=0.5]

\draw (4,0) node[circle,fill,inner sep=1.5pt, color=black](1){} -- (2,3) node[circle,fill,inner sep=1.5pt, color=black](1){} -- (6.5,5) node[circle,fill,inner sep=1.5pt, color=black](1){}-- (11,3) node[circle,fill,inner sep=1.5pt, color=black](1){} -- (9,0) node[circle,fill,inner sep=1.5pt, color=black](1){} -- (4,0) node[circle,fill,inner sep=1.5pt, color=black](1){};

\draw (5,1) node[circle,fill,inner sep=1.5pt, color=black](1){} -- (3.8,2.8) node[circle,fill,inner sep=1.5pt, color=black](1){} -- (6.5,3.6) node[circle,fill,inner sep=1.5pt, color=black](1){}-- (9.2,2.8) node[circle,fill,inner sep=1.5pt, color=black](1){} -- (8,1) node[circle,fill,inner sep=1.5pt, color=black](1){} -- (5,1) node[circle,fill,inner sep=1.5pt, color=black](1){};

\draw (4,0) node[circle,fill,inner sep=1.5pt, color=black](1){} -- (5,1) node[circle,fill,inner sep=1.5pt, color=black](1){};
\draw (2,3) node[circle,fill,inner sep=1.5pt, color=black](1){} -- (3.8,2.8) node[circle,fill,inner sep=1.5pt, color=black](1){};
\draw (6.5,5) node[circle,fill,inner sep=1.5pt, color=black](1){} -- (6.5,3.6) node[circle,fill,inner sep=1.5pt, color=black](1){};
\draw (11,3) node[circle,fill,inner sep=1.5pt, color=black](1){} --  (9.2,2.8) node[circle,fill,inner sep=1.5pt, color=black](1){};
\draw (9,0) node[circle,fill,inner sep=1.5pt, color=black](1){} --  (8,1) node[circle,fill,inner sep=1.5pt, color=black](1){};

\filldraw[fill=red!40!white, draw=black] (6.5,3.6) node[circle,fill,inner sep=1.5pt, color=black](1){} -- (9.2,2.8) node[circle,fill,inner sep=1.5pt, color=black](1){} -- (8,1)node[circle,fill,inner sep=1.5pt, color=black](1){} -- (6.5,3.6)node[circle,fill,inner sep=1.5pt, color=black](1){};

\filldraw[fill=red!40!white, draw=black] (6.5,5) node[circle,fill,inner sep=1.5pt, color=black](1){} -- (2,3) node[circle,fill,inner sep=1.5pt, color=black](1){} -- (3.8,2.8)node[circle,fill,inner sep=1.5pt, color=black](1){} -- (6.5,5)node[circle,fill,inner sep=1.5pt, color=black](1){};

\filldraw[fill=red!40!white, draw=black] (6.5,5) node[circle,fill,inner sep=1.5pt, color=black](1){} -- (6.5,3.6) node[circle,fill,inner sep=1.5pt, color=black](1){} -- (3.8,2.8)node[circle,fill,inner sep=1.5pt, color=black](1){} -- (6.5,5)node[circle,fill,inner sep=1.5pt, color=black](1){};

\filldraw[fill=red!40!white, draw=black] (11,3) node[circle,fill,inner sep=1.5pt, color=black](1){} -- (9,0) node[circle,fill,inner sep=1.5pt, color=black](1){} -- (9.2,2.8)node[circle,fill,inner sep=1.5pt, color=black](1){} -- (11,3)node[circle,fill,inner sep=1.5pt, color=black](1){};

\filldraw[fill=red!40!white, draw=black] (8,1) node[circle,fill,inner sep=1.5pt, color=black](1){} -- (9,0) node[circle,fill,inner sep=1.5pt, color=black](1){} -- (9.2,2.8)node[circle,fill,inner sep=1.5pt, color=black](1){} -- (8,1)node[circle,fill,inner sep=1.5pt, color=black](1){};

\draw (15,0) node[circle,fill,inner sep=1.5pt, color=black](1){} -- (20,0) node[circle,fill,inner sep=1.5pt, color=black](1){} -- (20,5) node[circle,fill,inner sep=1.5pt, color=black](1){}-- (15,5) node[circle,fill,inner sep=1.5pt, color=black](1){} -- (15,0) node[circle,fill,inner sep=1.5pt, color=black](1){};

\draw (16,1) node[circle,fill,inner sep=1.5pt, color=black](1){} -- (19,1) node[circle,fill,inner sep=1.5pt, color=black](1){} -- (19,4) node[circle,fill,inner sep=1.5pt, color=black](1){}-- (16,4) node[circle,fill,inner sep=1.5pt, color=black](1){} -- (16,1) node[circle,fill,inner sep=1.5pt, color=black](1){};

\draw (15,0) node[circle,fill,inner sep=1.5pt, color=black](1){} -- (16,1) node[circle,fill,inner sep=1.5pt, color=black](1){};
\draw (20,0) node[circle,fill,inner sep=1.5pt, color=black](1){} -- (19,1) node[circle,fill,inner sep=1.5pt, color=black](1){};
\draw (20,5) node[circle,fill,inner sep=1.5pt, color=black](1){} -- (19,4) node[circle,fill,inner sep=1.5pt, color=black](1){};
\draw (15,5) node[circle,fill,inner sep=1.5pt, color=black](1){} --  (16,4) node[circle,fill,inner sep=1.5pt, color=black](1){};

\filldraw[fill=red!40!white, draw=black] (16,1) node[circle,fill,inner sep=1.5pt, color=black](1){} -- (19,1) node[circle,fill,inner sep=1.5pt, color=black](1){} -- (19,4)node[circle,fill,inner sep=1.5pt, color=black](1){} -- (16,1)node[circle,fill,inner sep=1.5pt, color=black](1){};

\filldraw[fill=red!40!white, draw=black] (16,1) node[circle,fill,inner sep=1.5pt, color=black](1){} -- (16,4) node[circle,fill,inner sep=1.5pt, color=black](1){} -- (19,4)node[circle,fill,inner sep=1.5pt, color=black](1){} -- (16,1)node[circle,fill,inner sep=1.5pt, color=black](1){};

\filldraw[fill=red!40!white, draw=black] (19,1) node[circle,fill,inner sep=1.5pt, color=black](1){} -- (19,4) node[circle,fill,inner sep=1.5pt, color=black](1){} -- (20,5)node[circle,fill,inner sep=1.5pt, color=black](1){} -- (19,1)node[circle,fill,inner sep=1.5pt, color=black](1){};

\filldraw[fill=red!40!white, draw=black] (19,1) node[circle,fill,inner sep=1.5pt, color=black](1){} -- (20,0) node[circle,fill,inner sep=1.5pt, color=black](1){} -- (20,5)node[circle,fill,inner sep=1.5pt, color=black](1){} -- (19,1)node[circle,fill,inner sep=1.5pt, color=black](1){};
%\node at (6.5,-1) {$\Gamma_1$};
\end{tikzpicture}

\caption{Examples of non-special prime flag complexes}
\label{non-special}
\end{figure}
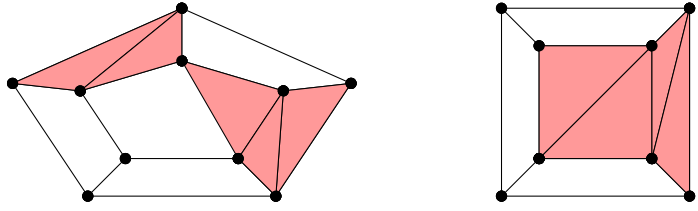

The following lemma will help us understand the structure of prime flag complexes and it can be compared to Lemma 3.7 in \cite{NT}.

\begin{lem}
\label{le0}
Let $\Delta\subset \field{S}^2$ be a prime flag complex. If $\Delta$ contains a full subcomplex $B$ which is a special prime flag complex, then $\Delta$ is exactly the complex $B$. In particular, the 1-skeleton of $\Delta$ is $\mathcal{CFS}$ if and only if $\Delta$ is special.
\end{lem}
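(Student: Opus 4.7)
The plan for the first assertion is a proof by contradiction leveraging the planar embedding. I would assume $\Delta \supsetneq B$ and pick a vertex $w \in \Delta - B$, then analyze $B$ inside $\field{S}^2$ in each of the three special types, according as the equator graph $H$ on $\{x_1, x_2, x_3\}$ has $0$, $1$, or $2$ edges. A direct check (using that $K_{2,3}$ has an essentially unique planar embedding up to homeomorphism of $\field{S}^2$) shows that $\field{S}^2 - B$, with the triangular $2$-cells of $B$ counted as part of $B$, is a disjoint union of $3$, $2$, or $1$ open disks respectively, each bounded by an induced $4$-cycle of $B$ of the form $(u, x_i, v, x_j)$ with $\{x_i, x_j\}$ a non-edge.

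Next I would locate $w$ in one such disk $R$ with bounding $4$-cycle $\sigma$. Since $B$ is a full subcomplex of $\Delta$, $\sigma$ remains an induced $4$-cycle of $\Delta$, and by the Jordan Curve Theorem $\sigma$ separates $\field{S}^2$ into $R$ (which contains $w$) and its complement (which contains the third equator vertex $x_k$ with $k \notin \{i, j\}$). Both $w$ and $x_k$ are vertices of $\Delta$ on opposite sides of $\sigma$, so $\sigma$ is a strongly separating induced $4$-cycle, contradicting primeness.

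For the ``in particular'' claim, the direction that special implies $\mathcal{CFS}$ is a direct verification in each of the three types: take $K = \emptyset$ when $H$ has $0$ or $1$ edges and $K = \{x_2\}$ when $H$ has $2$ edges, and observe that any two induced $4$-cycles of $\Omega$ share the non-adjacent pair $\{u, v\}$, so $\Omega^4$ is connected and covers $\Omega$. For the converse, I would assume $\Gamma = \Omega * K$ admits an $\Omega^4$-component $T$ whose $4$-cycles cover every vertex of $\Omega$. If $T$ consists of a single vertex, then $\Omega = C_4$ and $\Gamma = C_4 * K$; the cases $|K| \leq 1$ are excluded by the Standing Assumptions (a $4$-cycle and a cone over a $4$-cycle), while $|K| \geq 2$ forces $\Gamma$ to contain $C_4 * K_2$, which contains $K_{3,3}$ (with bipartition $\{a, c, u\}$ versus $\{b, d, v\}$) and is therefore non-planar, contradicting $\Delta \subset \field{S}^2$.

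Hence $T$ contains at least two $4$-cycles, and being connected it contains two adjacent ones $\sigma = (a, b, c, d)$ and $\sigma' = (a, b', c, d')$ sharing a diagonal, which I may take to be $\{a, c\}$. Since $\sigma \neq \sigma'$ as vertex sets, at least one of $b', d'$ lies outside $\{b, d\}$; relabeling if needed I may assume $d' \notin \{b, d\}$, and since $d' \neq a, c$ this yields five distinct vertices $\{a, c, b, d, d'\}$. The full subcomplex $B'$ of $\Delta$ on these vertices has $a, c$ non-adjacent, both joined to each of $b, d, d'$, and the induced subgraph on $\{b, d, d'\}$ is triangle-free because $\{b, d\}$ is a non-edge --- so $B'$ is special. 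Applying the first assertion gives $\Delta = B'$, forcing $K = \emptyset$ and showing $\Delta$ is special. The main obstacle I anticipate is the careful bookkeeping of possible coincidences among $\{b, d, b', d'\}$ when selecting the five vertices, together with the embedding analysis in the first part ensuring that the bounding $4$-cycles of the regions of $\field{S}^2 - B$ are correctly identified.
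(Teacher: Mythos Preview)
Your argument for the first assertion is exactly the paper's: assume a vertex $w\in\Delta-B$, locate it in a region of $\field{S}^2-B$ bounded by an induced $4$-cycle $\sigma$ of $B$, observe the remaining equator vertex lies on the other side, and conclude $\sigma$ strongly separates $\Delta$. The paper states this in two sentences without the case split; your region count for the three special types is correct and makes the step explicit.

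For the ``in particular'' clause the paper gives no proof at all, so your argument is genuinely additional content. The forward direction (special $\Rightarrow$ $\mathcal{CFS}$) and the $|K|\ge 2$ subcase are fine, and your main mechanism---two adjacent $4$-cycles in $T$ sharing a diagonal produce a special full subcomplex $B'$, whence $\Delta=B'$ by the first assertion---is correct and is exactly how the first assertion is meant to be used.

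There is one genuine slip in the single-vertex-$T$ case. You dispose of $|K|\le 1$ by invoking the Standing Assumptions, but the lemma only assumes $\Delta$ is prime, not that it satisfies the Standing Assumptions. For $|K|=0$ the fix is cosmetic: $\Delta=C_4$ is already excluded by clause~(2) of the definition of prime. For $|K|=1$, however, the cone over $C_4$ is \emph{not} excluded by primeness---it is prime---so your argument as written has a gap. The repair is immediate: the cone on $C_4$ with apex $v$ and base $(a,b,c,d)$ is itself the suspension of the path $b\!-\!v\!-\!d$ by $\{a,c\}$, hence special, and the desired conclusion holds directly. With this correction your proof of the equivalence is complete.
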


\begin{proof}
Assume by way of contradiction that $\Delta$ is not equal $B$. Then there is a vertex of $v$ of $\Delta$ which is not a vertex of $B$. Therefore, $v$ is a point in $\field{S}^2-B$. Since $B$ is a special prime flag complex, $B$ is one of the complexes in Figure~\ref{special}. By working on each case of $B$ in Figure~\ref{special}, we always can find an induced $4$-cycle $\sigma$ in $B$ such that $v$ and the unique vertex $u$ of $B-\sigma$ lies in different components of $\field{S}^2-\sigma$. This implies that $\sigma$ strongly separates $\Delta$. Therefore, $\Delta$ is not prime which is a contradiction.

We now prove that the 1-skeleton of $\Delta$ is $\mathcal{CFS}$ if and only if $\Delta$ is special. If $\Delta$ is special, we can see from Figure~\ref{special} that the 1-skeleton of $\Delta$ is $\mathcal{CFS}$. We now assume that the 1-skeleton of $\Delta$ is $\mathcal{CFS}$. Then $\Delta$ contains a full subcomplex $B$ which is a special prime flag complex. Therefore, $\Delta$ must be exactly the complex $B$ as we proved above. Thus, $\Delta$ is special.
\end{proof}

The following lemma will play an important role in understanding the relatively hyperbolic structure of right-angled Coxeter groups of planar defining nerves (see Theorem~\ref{sosonice}).

\begin{lem}
\label{for rel}
Let $\Delta\subset \field{S}^2$ be a non-special prime flag complex. Let $\alpha_1$ and $\alpha_2$ be two distinct induced 4-cycles of $\Delta$ which have non-empty intersection. Then $\alpha_1\cap\alpha_2$ is a vertex or an edge of $\Delta$. 
\end{lem}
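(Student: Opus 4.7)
The plan is to enumerate the possible shapes of $\alpha_1 \cap \alpha_2$ as a subgraph of a $4$-cycle and rule out everything except a single vertex and a single edge. Since $\alpha_1$, $\alpha_2$ are distinct induced $4$-cycles and an induced $4$-cycle is determined by its vertex set, they cannot share all four vertices; so the intersection has $1$, $2$, or $3$ common vertices. The one-vertex case is one of the desired conclusions. With two common vertices $u,v$, either they are joined by an edge (intersection is a single edge, again a desired conclusion), or not --- in which case $u,v$ must be opposite in both $\alpha_i$, since an edge $uv$ would be a chord of one of them and contradict induced-ness. Call this Case~(ii). With three common vertices, they form an induced $P_3$ in each $\alpha_i$ with a common middle vertex (otherwise a chord of one $\alpha_i$ would appear), giving Case~(i). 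It remains to derive a contradiction in Cases (i) and (ii).

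In Case~(i), I would write $\alpha_1 = u-w-v-b-u$ and $\alpha_2 = u-w-v-d-u$ with $b\ne d$, and let $B$ be the full subcomplex of $\Delta$ on $\{u,v,w,b,d\}$. The only possible extra edges on this vertex set are $uv$, $wb$, $wd$ (all forbidden as chords of $\alpha_1$ or $\alpha_2$) and $bd$, so $B = \{u,v\} \ast G_0$, where $G_0$ is the induced subgraph on $\{w,b,d\}$ and has at most one edge; in particular $G_0$ contains no triangle, so $B$ is special. Lemma~\ref{le0} then forces $\Delta = B$, contradicting the hypothesis that $\Delta$ is non-special.

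In Case~(ii), I would write $\alpha_1 = u-a-v-b-u$ and $\alpha_2 = u-c-v-d-u$, and let $B$ be the full subcomplex of $\Delta$ on $\{u,v,a,b,c,d\}$. The chords $uv$, $ab$, $cd$ are excluded, so the extras lie among $\{ac,ad,bc,bd\}$; let $k$ denote their count. Then $B$ is a $2$-connected planar complex with $V=6$ and $E=8+k$, so any planar embedding (inherited from $\Delta$) has $F=4+k$ faces. For each extra edge $xy$, the triangles $u$-$x$-$y$ and $v$-$x$-$y$ must be faces of $B$: any remaining vertex of $B$ is joined by an edge to both $x$ and $y$ through $u$ or $v$, so it must lie on the same side of the triangle as those neighbors, leaving one side empty. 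After these $2k$ forced triangle faces, the remaining $4-k$ faces contribute total boundary length $16-4k$, so at most $4-k$ of them can be $4$-cycle faces. But $B$ contains at least $6-k$ induced $4$-cycles of the form $u$-$x$-$v$-$y$-$u$ (one for each pair $\{x,y\}\subset\{a,b,c,d\}$ with $xy$ not an edge), each induced in $\Delta$ by fullness. Since $6-k > 4-k$, at least two induced $4$-cycles of $B$ are not face-bounding, and hence strongly separate $B$---and therefore $\Delta$---in $\field{S}^2$, contradicting primality. The main obstacle is Case~(ii): Lemma~\ref{le0} does not apply directly ($B$ has six vertices and need not be special), and the contradiction emerges only by combining Euler's formula with the triangle-face observation above.
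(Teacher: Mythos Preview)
Your proof is correct, and Case~(i) matches the paper's approach exactly. In Case~(ii), however, you take a considerably longer detour than the paper does. The paper handles both cases at once with the single observation that if $\alpha_1\cap\alpha_2$ contains two non-adjacent vertices, then the subcomplex induced by $\alpha_1\cup\alpha_2$ contains a special full subcomplex, so Lemma~\ref{le0} forces $\Delta$ to be special. Concretely, in your Case~(ii) notation: drop $d$ and look at the full subcomplex on $\{u,v,a,b,c\}$. Since $u,v$ are adjacent to each of $a,b,c$ and $uv$ is not an edge, this is the suspension $\{u,v\}\ast G_0$ with $G_0$ the induced graph on $\{a,b,c\}$; and $ab$ is not an edge (it would be a chord of $\alpha_1$), so $G_0$ is not a triangle. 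Hence this five-vertex subcomplex is special, and Lemma~\ref{le0} finishes the argument just as in your Case~(i). Your Euler-characteristic and face-counting argument is valid (the key points---that the $2k$ filled triangles are faces of $B^{(1)}$ because $B$ is flag, that the remaining $4-k$ faces are forced to be $4$-cycles, and that an induced $4$-cycle of $B$ not bounding a face of $B$ must strongly separate $\Delta$---all hold), but it is working much harder than necessary.
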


\begin{proof}
Assume by way of contradiction that $\alpha_1\cap\alpha_2$ contains two non-adjacent vertices, then the subcomplex of $\Delta$ induced by $\alpha_1\cup\alpha_2$ contains a special full subcomplex $B$. In particular, $\Delta$ contains the special full subcomplex $B$. Also $\Delta$ is a prime flag complex. Therefore, by Lemma~\ref{le0}, $\Delta=B$ is special which is a contradiction. Therefore, $\alpha_1\cap\alpha_2$ is a vertex or an edge of $\Delta$. 
\end{proof}

The following definition is an extension of the concept of strong visual decomposition of triangle free planar graphs (see Definition~3.4 in \cite{NT}) to planar flag complexes.

\begin{defn}
Let $\Delta\subset \field{S}^2$ be a planar flag complex and $\sigma$ a strongly separating $4$-cycle of $\Delta$. Then $\field{S}^2-\sigma$ has two components $U_1$ and $U_2$ which both intersect $\Delta$. For each $i=1,2$, let $\Delta_i$ be $\sigma$ together with components of $\Delta-\sigma$ in $U_i$. Then, $\Delta=\Delta_1\cup\Delta_2$ and $\Delta_1\cap\Delta_2=\sigma$. We call the pair $(\Delta_1,\Delta_2)$ a \emph{strong visual decomposition} of $\Delta$ with respect to the given embedding of $\Delta$ into $\field{S}^2$. If the embedding is clear from the context, we just say the pair $(\Delta_1,\Delta_2)$ is a strong visual decomposition of $\Delta$ along $\sigma$.
\end{defn}

The proof of the following proposition is analogous to the proof of Proposition~3.11 in \cite{NT}. We leave the details to the reader.

\begin{prop}
\label{prop:keyidea1}
Let $\Delta\subset \field{S}^2$ be a planar flag complex satisfying Standing Assumptions. Then there is a finite tree $T$ that encodes the structure of $\Delta$ as follows:
\begin{enumerate}
\item Each vertex $v$ of $T$ is associated to a full subcomplex $\Delta_v$ of $\Delta$ which is prime. Moreover, $\Delta_v\neq\Delta_{v'}$ if $v\neq v'$ and $\bigcup_{v\in V(T)}^{}\Delta_v=\Delta$.
\item Each edge $e$ of $T$ is associated to an induced 4--cycle $\Delta_e$ of $\Delta$. Moreover, $\Delta_e\neq\Delta_{e'}$ if $e\neq e'$.

\item Two vertices $v_1$ and $v_2$ of $T$ are endpoints of the same edge $e$ if and only if $\Delta_{v_1}\cap\Delta_{v_2}=\Delta_e$. Moreover, if $V_1$ and $V_2$ are vertex sets of two components of $T$ removed the midpoint of $e$, then $(\bigcup_{v\in V_1}\Delta_v,\bigcup_{v\in V_2}\Delta_v)$ is a strong visual decomposition of $\Delta$ along $\Delta_e$.
\end{enumerate}
Moreover, the 1-skeleton of $\Delta$ is $\mathcal{CFS}$ if and only if the 1-skeleton of $\Delta_v$ is also $\mathcal{CFS}$ for each vertex $v$ of $T$ (i.e. $\Delta_v$ is a special prime complex by Lemma \ref{le0}).
\end{prop}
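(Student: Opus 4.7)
The plan is to mimic the proof of Proposition~3.11 in \cite{NT} verbatim, substituting Lemma~\ref{le0} and Lemma~\ref{for rel} for their triangle-free analogues. I would induct on the number of vertices of $\Delta$, weakening the hypotheses to allow any planar flag complex satisfying condition~(1) of Standing Assumptions that is not itself a $4$-cycle; condition~(2) must be relaxed to survive the recursion, since a piece may be a cone on a $4$-cycle, and such pieces are already prime. In the base case $\Delta$ has no strongly separating induced $4$-cycle, so $\Delta$ is prime by Definition~\ref{de1} and we take $T$ to be a single vertex $v$ with $\Delta_v = \Delta$.

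For the inductive step, pick a strongly separating induced $4$-cycle $\sigma$ and, via the Jordan Curve Theorem, let $U_1, U_2$ be the two components of $\field{S}^2 - \sigma$, each meeting $\Delta$. Set $\Delta_i = \sigma \cup (\Delta \cap U_i)$; then $(\Delta_1, \Delta_2)$ is a strong visual decomposition of $\Delta$ along $\sigma$, and each $\Delta_i$ has strictly fewer vertices than $\Delta$. After verifying that each $\Delta_i$ still satisfies condition~(1) of Standing Assumptions and is not itself a $4$-cycle, the inductive hypothesis yields trees $T_i$. Since $\sigma$ bounds the empty region $U_{3-i}$ inside $\Delta_i$, $\sigma$ is not strongly separating in $\Delta_i$, so $\sigma$ lies in a unique prime piece $\Delta_{v_i}$ of $T_i$. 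I would form $T$ by connecting $T_1$ and $T_2$ by a new edge $e$ from $v_1$ to $v_2$, with $\Delta_e = \sigma$. Properties~(1)--(3) then fall out directly from the inductive descriptions, once one observes that the $4$-cycle indexing any edge $e'$ of $T_i$ remains strongly separating in $\Delta$ by tracking its two non-empty sides across $\sigma$.

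The main obstacle is verifying that each $\Delta_i$ inherits the ``no separating vertex/edge'' condition. The argument is that any separator $x$ of $\Delta_i$ would also separate $\Delta$: since $\sigma$ is a $4$-cycle, $\sigma - x$ remains path-connected (removing a single vertex or a single edge from a $4$-cycle leaves a connected path), so $\sigma - x$ lies in a single component $A$ of $\Delta_i - x$, and then the entirety of $\Delta_{3-i} - x$ attaches to $A$ through $\sigma$ in $\Delta - x$, leaving any other component of $\Delta_i - x$ isolated in $\Delta - x$ and contradicting condition~(1) for $\Delta$. For the ``moreover'' clause on the $\mathcal{CFS}$ property, I would show that $\mathcal{CFS}$-ness transfers in both directions along the decomposition using Lemma~\ref{for rel}: any chain of shared-diagonal moves between induced $4$-cycles of $\Delta$ that passes through a non-special prime piece $\Delta_v$ must stay inside $\Delta_v$, since in $\Delta_v$ any two intersecting induced $4$-cycles meet only in a vertex or an edge; conversely, such chains in each piece re-glue across the shared $4$-cycle $\Delta_e$. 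Combined with Lemma~\ref{le0}, this yields the stated equivalence that $\Delta^{(1)}$ is $\mathcal{CFS}$ if and only if each $\Delta_v$ is special.
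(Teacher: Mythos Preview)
Your proposal is correct and is precisely the approach the paper intends: the paper gives no proof here, stating only that ``the proof of the following proposition is analogous to the proof of Proposition~3.11 in \cite{NT}. We leave the details to the reader.'' Your sketch fills in those details in the expected way---induction on the number of vertices via a strong visual decomposition along a strongly separating induced $4$--cycle, with Lemmas~\ref{le0} and~\ref{for rel} replacing their triangle-free analogues---so the two approaches coincide.
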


\subsection{Relatively hyperbolic structure and manifold structure of RACGs with planar defining nerves}
\label{rhsms}

In this subsection, we are going to discuss the proof of Theorem~\ref{mani}, Theorem~\ref{sosonice}, and Theorem~\ref{SC-carpet}. Before proving Theorem~\ref{mani}, we discuss some key concepts in the following remark.

%\begin{thm}
%\label{sosonice}
%Let $\Gamma$ be a graph whose flag complex is planar. There is a collection $\JJ$ of $\mathcal{CFS}$ subgraph of $\Gamma$ such that the right-angled Coxeter group $G_\Gamma$ is relatively hyperbolic with respect to the collection $\PP=\set{G_J}{J\in \JJ}$. 
%\end{thm}

%Theorem~\ref{sosonice} helps us understand the divergence of one-ended right-angled Coxeter groups of planar flag complexes and this contributes to a study of manifold structures of such right-angled Coxeter groups. Behrstock-Hagen-Sisto in \cite{MR3623669} show that the divergence of a one-ended right-angled Coxeter group is either exponential or bounded above by a polynomial. Dani-Thomas in \cite{MR3314816} also show that for every positive integer $d$, there is a right-angled Coxeter group with divergence $x^d$. However by combining Theorem \ref{sosonice} with results in \cite[Theorem 7.4]{IL} and \cite[Theorem 1.3]{Sisto}, the divergence functions of one-ended right-angled Coxeter groups $G_\Delta$ of planar flag complexes are quite simple. More precisely,

%\begin{cor}
%\label{cor:divofracg}
%Let $\Gamma$ be a graph whose flag complex is planar, connected with no separating vertex and no separating edge. Then the divergence of the right-angled Coxeter group $G_\Gamma$ is linear, or quadratic, or exponential.
%\end{cor}

\begin{rem}
\label{imp}
Let $\Delta\subset \field{S}^2$ be a planar flag complex satisfying Standing Assumptions. Then the divergence of the right-angled Coxeter group $G_\Delta$ is linear if and only if the $1$--skeleton of $\Delta$ is a join of two graphs of diameters at least $2$ (see Theorem~\ref{dt}). Since $\Delta\subset \field{S}^2$ is a planar flag complex satisfying Standing Assumptions, the $1$--skeleton of $\Delta$ is a join of two graphs of diameters at least $2$ if and only if $\Delta$ is a suspension of a graph $K$ where $K$ is an $n$--cycle for $n\geq 4$ or $K$ has at least $3$ vertices and it is a finite disjoint union of vertices and finite trees with vertex degrees $1$ or $2$. If $K$ has at least $3$ vertices and it is a finite disjoint union of vertices and finite trees with vertex degrees $1$ or $2$, then we call $K$ a \emph{broken line}. 
\end{rem}

We begin with a few words about the proof of Theorem~\ref{sosonice}. The proof of Theorem~\ref{sosonice} is completely analogous to that of Theorem 1.6 of \cite{NT}, and we leave the details to the reader. The key ideas behind the proof of Theorem 1.6 of \cite{NT} are the tree structure of graphs and the fact that the intersection of two induced $4$--cycles in a prime graph which is not a suspension of three points is empty, a vertex, or an edge. In the case of planar flag complexes, the analogous facts are addressed by Proposition~\ref{prop:keyidea1} and Lemma~\ref{for rel}. We will use Theorem~\ref{sosonice} to prove Theorem~\ref{mani}.

\begin{proof}[Proof of Theorem~\ref{mani}]
We first show that $G_\Delta$ is virtually the fundamental group of a 3-manifold $M$ with empty or toroidal boundary if and only if the boundary of each region, if it exists, in $\field{S}^2-\Delta$ is a 4-cycle.

We are now going to prove the necessity. Suppose that the boundary of each region, if it exists, in $\field{S}^2-\Delta$ is a 4-cycle. Let $\Gamma$ be the $1$-skeleton of $\Delta$. Then the boundary of each region in $\field{S}^2-\Gamma$ is a $3$-cycle or a 4-cycle. The group $G_\Delta$ acts by reflections on a simply-connected 3-manifold $N$ with fundamental domain a ball whose boundary is the cell structure on $\field{S}^2$ that is dual to $\Gamma$. Since the boundary of each region in $\field{S}^2-\Gamma$ is a $3$-cycle or a 4-cycle, the stabilizer of each vertex is $\Z_2^3$ or $D_\infty\times D_\infty$. Let $H$ be a torsion-free finite index subgroup of $G_\Delta$. The quotient space $N/H$ has the property that the link of each vertex associated to a region with $4$-cycle boundary (if it exists) is a torus, and we can thus remove a finite neighborhood from such a vertex (if it exists) to obtain a desired manifold $M$ whose boundary is empty or a union of tori.

We now prove sufficiency. Assume that $G_\Delta$ has a finite index subgroup $H$ such that $H$ is the fundamental group of a 3-manifold $M$ with empty or toroidal boundary. Then the boundary of each region in $\field{S}^2-\Gamma$ must be a $3$-cycle or a 4-cycle, otherwise the Euler characteristic $\chi(H)$ is negative which is a contradiction (see Section 3.b in \cite{KSD}). Therefore, the boundary of each region, if it exists, in $\field{S}^2-\Delta$ is a 4-cycle. 

Next, we consider all possible types of $3$--manifold $M$ via graph theoretic properties on $\Delta$. %We now consider the type of manifold $M$.
The fact (\ref{item:seifert}) is clearly true since a right-angled Coxeter group induced by an $n$-cycle ($n\geq 4$) or a broken line is virtually a surface group. For the fact (\ref{item: graphmld}) we first observe that the 1-skeleton of $\Delta$ is not a join of two graphs of diameter at least 2. %Otherwise $\Delta$ is a suspension of some $n$-cycle ($n\geq 4$) or some broken line since $\Delta$ by Remark~\ref{imp}. 
Therefore, by Theorem~\ref{dt} the divergence of $G_\Delta$ (also the divergence of $\pi_1(M)$) is quadratic. By Theorem~\ref{thm:GKL:divergence} $M$ must be a graph manifold. 

For the facts (\ref{item:mixed}) and (\ref{item:hyper}) we note that if the $1$-skeleton of $\Delta$ is not $\mathcal{CFS}$, then then the divergence of $G_\Delta$ (also the divergence of $\pi_1(M)$) is exponential by Theorem~\ref{sosonice}. Therefore, $M$ must be a hyperbolic manifold with boundary or a mixed manifold. If, in addition, $\Delta$ contains at least a separating induced $4$-cycle, then $M$ contains at least a JSJ torus and it must be a mixed manifold which proves (\ref{item:mixed}).

For (\ref{item:hyper}) we see that $\Delta$ is non-special prime flag complex. Therefore, in this case, $G_\Delta$ is hyperbolic relative to the collection of all right-angled Coxeter subgroups $G_\sigma$ where $\sigma$ is an induced $4$-cycles of $\Delta$. Due to the peripheral structure of $G_\Delta$, the JSJ decomposition of $M$ cannot have a Seifert piece. Moreover, the JSJ decomposition of $M$ must consist of a single piece, otherwise $\pi_1(M)$ would split over $\Z^2$. 
%\begin{comment}
%\textcolor{purple}{Copy text. Why $M$ can not have a Seifert piece? We should explain it follows from the fact peripheral subgroups of $G_\Gamma$ is virtually $\Z^2$. On the other hand, if the manifold $M$ contains a Seifert component $S$ then at least one peripheral subgroup of $(\pi_1(M),\PP)$ is the fundamental group of a maximal graph manifold component $N$ which contains $S$.  \color{red} H. the modification is better?}
%\end{comment}.
This implies that $M$ is a hyperbolic manifold with boundary.   \end{proof}

We now study the relatively hyperbolic structure of right-angled Coxeter groups which are not virtually the fundamental group of a 3-manifold with empty or toroidal boundary. We prove that each such group is hyperbolic relative to a collection of its proper subgroups and we then study its Bowditch boundary with respect to the minimal peripheral structure. The following two lemmas will help us study the relatively hyperbolic structure of right-angled Coxeter groups which are not virtually the fundamental group of a 3-manifold with empty or toroidal boundary.

\begin{lem}
\label{tt}
Let $\Delta\subset \field{S}^2$ be a planar flag complex. Let $(\Delta_1,\Delta_2)$ be a strong visual decomposition of $\Delta$ along some induced $4$--cycle $\sigma$. If $R$ is a region of $\field{S}^2-\Delta$, then $R$ is a region of either $\field{S}^2-\Delta_1$ or $\field{S}^2-\Delta_2$. 
\end{lem}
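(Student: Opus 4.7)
The plan is to argue by point-set topology on $\field{S}^2$. Since $\sigma \subseteq \Delta$, the region $R$ is disjoint from $\sigma$, and because $\field{S}^2 - \sigma = U_1 \sqcup U_2$ with both $U_i$ open, the connected set $R$ lies entirely in one of them; without loss of generality assume $R \subseteq U_1$. By the symmetric roles of $\Delta_1$ and $\Delta_2$ in this setup, it then suffices to show that $R$ is a region of $\field{S}^2 - \Delta_1$.

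First I would observe that $R \cap \Delta_1 = \emptyset$ since $\Delta_1 \subseteq \Delta$, so $R$ is an open connected subset of $\field{S}^2 - \Delta_1$. Let $R'$ denote the connected component of $\field{S}^2 - \Delta_1$ containing $R$; the goal is to prove $R = R'$, which I would accomplish by showing that $R$ is also closed in the subspace $\field{S}^2 - \Delta_1$.

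The crux is controlling the topological boundary $\partial R \subseteq \field{S}^2$. On one hand, $\partial R \subseteq \Delta$ because $R$ is a region of $\field{S}^2 - \Delta$. On the other hand, since $R \subseteq U_1$ and $\bar{U_1} = U_1 \cup \sigma$, the closure satisfies $\bar{R} \subseteq U_1 \cup \sigma$, hence $\partial R \subseteq U_1 \cup \sigma$. Combining these two containments,
\[
  \partial R \;\subseteq\; \Delta \cap (U_1 \cup \sigma) \;=\; (\Delta \cap U_1) \cup \sigma \;=\; (\Delta_1 - \sigma) \cup \sigma \;=\; \Delta_1,
\]
where the penultimate equality uses the very definition of the strong visual decomposition: $\Delta \cap U_1$ consists precisely of those components of $\Delta - \sigma$ lying in $U_1$, which is $\Delta_1 - \sigma$.

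With $\partial R \subseteq \Delta_1$ established, the closure of $R$ inside the subspace $\field{S}^2 - \Delta_1$ equals $\bar{R} \cap (\field{S}^2 - \Delta_1) = (R \cup \partial R) \cap (\field{S}^2 - \Delta_1) = R$, so $R$ is simultaneously open, closed, connected, and nonempty in $\field{S}^2 - \Delta_1$; hence $R = R'$ and $R$ is a region of $\field{S}^2 - \Delta_1$. The only delicate point is the boundary containment $\partial R \subseteq U_1 \cup \sigma$, which rests on the elementary fact that $U_1$ and $U_2$ are disjoint open subsets of $\field{S}^2$ with common frontier $\sigma$; everything else is a routine connected-component argument.
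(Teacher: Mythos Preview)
Your proof is correct and follows essentially the same point-set topology approach as the paper: both arguments use that $R$ lies in one Jordan domain $U_1$ and that $\Delta\cap U_1=\Delta_1-\sigma$, and then conclude that $R$ is a full component of $\field{S}^2-\Delta_1$. The only cosmetic difference is that the paper phrases the last step via the sandwich inclusion $\field{S}^2-\Delta\subset\field{S}^2-\Delta_1\subset(\field{S}^2-\Delta)\cup U_2$ to show the enclosing component $R_1$ falls back into $\field{S}^2-\Delta$, whereas you show directly that $\partial R\subset\Delta_1$ so that $R$ is clopen in $\field{S}^2-\Delta_1$; these are complementary formulations of the same observation.
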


\begin{proof}
Let $U$ and $V$ be two regions of $\field{S}^2-\sigma$ such that $\Delta_1-\sigma\subset U$ and $\Delta_2-\sigma\subset V$. Since $R$ is a connected subset in $\field{S}^2-\sigma$, $R$ lies inside either $U$ or $V$ (say $U$). We now prove that $R$ is also a region of $\field{S}^2-\Delta_1$. We first observe that $\field{S}^2-\Delta\subset \field{S}^2-\Delta_1\subset (\field{S}^2-\Delta)\cup V$. Therefore $R$ lies in some region $R_1$ of $\field{S}^2-\Delta_1$. Since $R_1$ is a connected subset of $\field{S}^2-\sigma$,  $R_1$ lies either in $U$ or $V$, say $U$. Moreover, $R_1\cap U$ contains the non-empty set $R$. Therefore $R_1$ must lie in $U$ which implies that $R_1\cap V=\emptyset$. Thus $R_1$ is a connected subset of $\field{S}^2-\Delta$. This implies that $R_1$ must lie in some region of $\field{S}^2-\Delta$ and the region must be $R$. Therefore, $R=R_1$ is also a region of $\field{S}^2-\Delta_1$.
\end{proof}

\begin{lem}
\label{aa}
Let $\Delta\subset \field{S}^2$ be a planar flag complex satisfying Standing Assumptions. Let $T$ be a finite tree that encodes the structure of $\Delta$ as in Proposition~\ref{prop:keyidea1}. Then for each region $R$ of $\field{S}^2-\Delta$ there is a vertex $v$ of $T$ such that $R$ is also a region of $\field{S}^2-\Delta_v$. 
\end{lem}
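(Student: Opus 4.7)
The plan is to induct on the number of vertices of the tree $T$. For the base case, if $T$ has a single vertex $v$, then $\Delta=\Delta_v$ by Proposition~\ref{prop:keyidea1}(1), and the conclusion is immediate.

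For the inductive step, suppose $T$ has at least two vertices and fix any edge $e$ of $T$. Removing (the midpoint of) $e$ disconnects $T$ into two subtrees $T_1,T_2$ with vertex sets $V_1,V_2$; set $\Delta_i=\bigcup_{v\in V_i}\Delta_v$. By Proposition~\ref{prop:keyidea1}(3), the pair $(\Delta_1,\Delta_2)$ is a strong visual decomposition of $\Delta$ along the induced $4$-cycle $\Delta_e$. Applying Lemma~\ref{tt} to this decomposition, the given region $R$ of $\field{S}^2-\Delta$ is a region of either $\field{S}^2-\Delta_1$ or $\field{S}^2-\Delta_2$; without loss of generality, assume the former. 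One now wants to apply induction to $(\Delta_1,T_1)$, noting that $T_1$ has strictly fewer vertices than $T$, to extract a vertex $v\in V_1\subseteq V(T)$ such that $R$ is a region of $\field{S}^2-\Delta_v$.

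The main (minor) issue is that $\Delta_1$ itself need not satisfy the Standing Assumptions, so the inductive hypothesis should be phrased slightly more generally: for any planar flag complex $\Delta'\subset\field{S}^2$ equipped with a finite tree $T'$ satisfying conditions (1)--(3) of Proposition~\ref{prop:keyidea1}, every region of $\field{S}^2-\Delta'$ is a region of $\field{S}^2-\Delta'_v$ for some $v\in V(T')$. The pair $(\Delta_1,T_1)$ satisfies this generalized hypothesis, because conditions (1)--(3) are inherited from $(\Delta,T)$: for each remaining edge $e'$ of $T_1$, the cycle $\Delta_{e'}$ still strongly separates $\Delta_1$ in $\field{S}^2$ (both subtrees of $T_1-e'$ contain vertices whose associated prime subcomplexes meet the respective sides of $\Delta_{e'}$), and the union/intersection assertions are automatic. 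The induction then goes through using only Lemma~\ref{tt} and the combinatorics of $T$; no further topological input is required.
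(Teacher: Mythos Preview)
Your proposal is correct and follows essentially the same approach as the paper: the paper's proof is the single sentence ``The lemma can be proved by induction on the number of vertices of tree $T$ by using Lemma~\ref{tt},'' and you have carried out exactly that induction. Your observation that the inductive hypothesis should be phrased for any pair $(\Delta',T')$ satisfying (1)--(3) of Proposition~\ref{prop:keyidea1}, rather than for complexes satisfying the Standing Assumptions, is a valid technical point that the paper leaves implicit.
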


\begin{proof}
The lemma can be proved by induction on the number of vertices of tree $T$ by using Lemma~\ref{tt}.
\end{proof}

In the rest of this subsection, we prove Theorem~\ref{SC-carpet}. The following proposition shows the existence of a non-trivial relatively hyperbolic structure of right-angled Coxeter groups which are not virtually the fundamental group of a 3-manifold with empty or toroidal boundary.

\begin{prop}
\label{pppp1}
Let $\Delta\subset \field{S}^2$ be a connected flag complex with no separating vertex and no separating edge. Assume that the boundary of some region in $\field{S}^2-\Delta$ is an $n$-cycle for some $n\geq 5$. Then the $1$-skeleton of $\Delta$ is not $\mathcal{CFS}$. In particular, the divergence of $G_\Delta$ is exponential (i.e. $G_\Delta$ is non-trivially relatively hyperbolic). 
\end{prop}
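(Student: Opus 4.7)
The plan is to reduce to the prime tree decomposition of Proposition~\ref{prop:keyidea1} and then exploit a direct inspection of the three special prime flag complexes. First I would dispose of two degenerate sub-cases. If $\Delta$ contains no induced $4$-cycle then, for any join decomposition $\Delta^{(1)} = \Omega \ast K$, the auxiliary graph $\Omega^{4}$ is empty, so $\Delta^{(1)}$ trivially fails to be $\mathcal{CFS}$. If $\Delta$ is a single $4$-cycle or a cone over a $4$-cycle, a direct picture shows that every region of $\field{S}^2 \setminus \Delta$ is bounded by a $4$-cycle, contradicting the hypothesis. Thus I may assume $\Delta$ satisfies the Standing Assumptions.

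Let $R$ be a region of $\field{S}^{2} \setminus \Delta$ whose boundary is an $n$-cycle with $n \geq 5$. Applying Proposition~\ref{prop:keyidea1} yields a decomposition tree $T$ with prime pieces $\{\Delta_v\}_{v \in V(T)}$, and Lemma~\ref{aa} produces a vertex $v \in V(T)$ such that $R$ is also a region of $\field{S}^{2} \setminus \Delta_v$. Since $\partial R \subset \Delta_v$, the bounding cycle of $R$ in $\Delta_v$ is the same $n$-cycle with $n \geq 5$. The heart of the proof is then to verify, by inspection of the three special prime flag complexes in Figure~\ref{special}, that every region of $\field{S}^{2} \setminus \Delta_{\text{special}}$ is bounded by a $4$-cycle: in the $K_{2,3}$ case no $2$-simplex is filled and all three complementary faces are $4$-cycles; adding one extra edge among the three equatorial vertices forces the flag condition to fill two triangular faces, leaving two $4$-cycle regions; adding two extra edges similarly fills four triangular faces, leaving a single $4$-cycle region. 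Hence $\Delta_v$ cannot be special.

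Lemma~\ref{le0} then tells us that the $1$-skeleton of the non-special prime piece $\Delta_v$ is not $\mathcal{CFS}$, and the final clause of Proposition~\ref{prop:keyidea1} lifts this conclusion to $\Delta^{(1)}$ itself, which is the first assertion. For the ``in particular'' statement, I would invoke Theorem~\ref{sosonice} to exhibit $G_\Delta$ as relatively hyperbolic with respect to a collection of $\mathcal{CFS}$ special subgroups; since $\Delta^{(1)}$ is not itself $\mathcal{CFS}$, these peripheral subgroups are necessarily proper, so $G_\Delta$ is non-trivially relatively hyperbolic and Theorem~\ref{bh} then forces the divergence to be exponential.

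The main obstacle I anticipate lies in the middle step: one must carefully track which $2$-simplices the flag condition forces to fill in each of the three special prime complexes and confirm that the surviving regions in $\field{S}^2$ are bounded by $4$-cycles rather than longer cycles. This is a finite but slightly delicate case analysis; the remainder of the argument is packaged cleanly by Proposition~\ref{prop:keyidea1}, Lemmas~\ref{aa} and~\ref{le0}, and Theorems~\ref{sosonice} and~\ref{bh}.
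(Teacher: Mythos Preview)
Your proposal is correct and follows essentially the same route as the paper's proof: dispose of the degenerate cases, reduce to the Standing Assumptions, apply Proposition~\ref{prop:keyidea1} and Lemma~\ref{aa} to locate a prime piece $\Delta_v$ carrying the long boundary cycle, observe that $\Delta_v$ is therefore not special, and conclude via the last clause of Proposition~\ref{prop:keyidea1}. The only differences are cosmetic: you spell out the inspection of the three special prime complexes where the paper simply asserts ``$\Delta_v$ is not a special prime complex,'' and for the divergence statement you explicitly invoke Theorem~\ref{bh} after Theorem~\ref{sosonice}, whereas the paper cites Theorem~\ref{sosonice} alone.
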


\begin{proof}
Since the boundary of some region in $\field{S}^2-\Delta$ is an $n$-cycle for some $n\geq 5$, it follows that $\Delta$ is not a simplex, a $4$-cycle, or cone on a $4$-cycle. If $\Delta$ does not contain an induced $4$-cycle, then it is clear that the $1$-skeleton of $\Delta$ is not $\mathcal{CFS}$. Therefore, we assume that $\Delta$ contains at least one induced $4$-cycle. In this case, we note that $\Delta$ satisfies Standing Assumption.

Let $T$ be a tree that encodes the structure of $\Delta$ as in Proposition~\ref{prop:keyidea1}. By Lemma~\ref{aa} there is a vertex $v$ of $T$ such that the boundary of some region in $\field{S}^2-\Delta_v$ is an $n$-cycle for some $n\geq 5$. This implies that $\Delta_v$ is not a special prime complex. Therefore, by Proposition~\ref{prop:keyidea1} the $1$-skeleton of $\Delta$ is not $\mathcal{CFS}$. By Theorem~\ref{sosonice} the divergence of right-angled Coxeter group $G_\Delta$ is exponential.
\end{proof}

We now study the Bowditch boundary of the relatively hyperbolic right-angled Coxeter groups in Proposition~\ref{pppp1}. We begin with the definition of Sierpinski carpet and recall a result due to \'Swi\c{a}tkowski \cite{SJ} which gives sufficient conditions for when the $\CAT(0)$ boundary of a right-angled Coxeter group satisfying Standing Assumptions is a Sierpinski carpet.

\begin{defn}
Let $D_1, D_2,\cdots $ be a sequence of open disks in a 2-dimensional sphere $\field{S}^2$ such that
\begin{enumerate}
\item $\bar{D}_i \cap \bar{D}_j =\emptyset$ for $i\neq j$,
\item $diam(D_i) \to 0$ with respect to the round metric on $\field{S}^2$, and
\item $\bigcup D_i$ is dense. 
\end{enumerate}
Then $X= \field{S}^2-\bigcup D_i$ is a \emph{Sierpinski carpet}. The circles $C=\partial \bar{D}_i \subset X$ are called \emph{peripheral circles}.
\end{defn}

\begin{thm}[Theorem 1.3 in \cite{SJ}]
\label{basic}
Let $\Delta\subset \field{S}^2$ be a planar flag complex satisfying Standing Assumptions. Suppose also that the following two conditions hold:
\begin{enumerate}
    \item For any two distinct regions of $\field{S}^2-\Delta$, the intersection of their boundaries is empty, or a vertex, or an edge;
    \item The boundary $\sigma$ of each region of $\field{S}^2-\Delta$ satisfies the condition that for any two vertices of $\sigma$ staying at distance $2$ in $\Delta$ any induced path of length $2$ in the 1-skeleton of $\Delta$ connecting these vertices is entirely contained in $\sigma$.
\end{enumerate}
Then the $\CAT(0)$ boundary $\partial \Sigma_\Delta$ is the Sierpinski carpet.
\end{thm}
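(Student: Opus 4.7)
The plan is to verify Whyburn's topological characterization of the Sierpinski carpet: a non-empty, compact, connected, locally connected, 1-dimensional metric space that embeds in $\field{S}^2$ and has no local cut points is homeomorphic to the standard Sierpinski carpet. I would verify each property in turn, using conditions (1) and (2) at precisely the places where they are needed.

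First, I would establish the easier topological properties of $\partial \Sigma_\Delta$. Compactness and metrizability follow from general CAT(0) theory because $G_\Delta$ acts geometrically on $\Sigma_\Delta$. Connectedness is a consequence of $G_\Delta$ being one-ended (forced by the Standing Assumptions via Lemma \ref{lem:one ended}). Local connectedness and 1-dimensionality would be addressed via the combinatorial structure of $\Sigma_\Delta$: each region $R$ of $\field{S}^2 - \Delta$ has an $n$-cycle boundary $\sigma_R$ whose associated reflection subgroup is a virtually surface group acting geometrically on a convex subcomplex of $\Sigma_\Delta$, and its limit set is a Jordan curve $C_R \subset \partial \Sigma_\Delta$. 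These curves $C_R$ form the candidate peripheral circles of the carpet.

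Second, I would construct an embedding $\partial \Sigma_\Delta \hookrightarrow \field{S}^2$ using the planarity of $\Delta$. The region-boundary cycles $\sigma_R$ of $\Delta$ are cyclically ordered around the sphere, and their limit sets $C_R$ inherit a compatible cyclic arrangement at infinity. Concretely, one realizes $\Sigma_\Delta$ inside a planar reflection model (essentially the universal cover of a hyperbolic 3-orbifold with totally geodesic boundary in the case that $\Delta$ is non-special prime, reducing to this after a tree decomposition via Proposition \ref{prop:keyidea1}) so that $\partial \Sigma_\Delta$ embeds in a topological 2-sphere with the $C_R$ bounding disjoint complementary open disks $D_R$, whose diameters tend to zero and whose union is dense. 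Condition (1), which forces $\sigma_R \cap \sigma_{R'}$ to be empty, a vertex, or an edge, is precisely what guarantees that the closures $\bar D_R$ are pairwise disjoint.

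Third, and this is where the bulk of the work lies, I would rule out local cut points in $\partial \Sigma_\Delta$. By the standard visual dictionary for CAT(0) boundaries of RACGs, every local cut point of $\partial \Sigma_\Delta$ arises from a separating full subcomplex of $\Delta$ that is either a cut pair, a separating induced path of length $2$, or a separating complete-subgraph suspension that is not already engulfed by some $\sigma_R$. The Standing Assumptions exclude separating vertices and separating edges. Condition (1), translated graph-theoretically, prevents two region boundaries from sharing two non-adjacent vertices and thus forbids separating cut pairs and separating $4$-cycles outside a single $\sigma_R$. Condition (2), the $3$-convexity of each $\sigma_R$, is exactly the statement that rules out separating induced paths of length $2$: any such path joining two vertices of some $\sigma_R$ would be a length-$2$ witness to failure of $3$-convexity. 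Together these eliminate every potential source of a local cut point.

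The main obstacle I expect is step two, the planar embedding, in which one must coherently organize the countably many peripheral circles $C_R$ into a genuine sphere picture with pairwise disjoint complementary disks of diameters tending to zero. Handling this cleanly likely requires an inductive argument along the tree decomposition of Proposition \ref{prop:keyidea1}: at each prime piece one has a finite planar model, and the strong visual decomposition lets one glue these models compatibly across the separating $4$-cycles, producing the sphere in which $\partial \Sigma_\Delta$ sits as the complement of the dense family $\{D_R\}$.
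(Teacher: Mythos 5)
You should first know that the paper contains no proof of Theorem~\ref{basic} at all: the statement is imported verbatim as Theorem~1.3 of \cite{SJ} and is used as a black box (its only role is as an ingredient in the proof of Theorem~\ref{SC-carpet}). So there is no internal argument to compare your proposal against; it can only be judged as a standalone proof of the cited result, and as such it has genuine gaps at each of its three load-bearing steps.

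First, local connectedness and $1$-dimensionality of $\partial\Sigma_\Delta$ are asserted, not proved, and they are exactly where conditions (1) and (2) must enter: the suspension of four isolated vertices (a broken line) satisfies the Standing Assumptions, its Davis complex is a $4$-valent tree crossed with a line, and its $\CAT(0)$ boundary is the suspension of a Cantor set, which is \emph{not} locally connected. (That example fails condition (1) -- adjacent regions share a path of length $2$ -- but your sketch never explains how (1) and (2) produce local connectedness, so it would "prove" the carpet conclusion for this example too.) Second, the planar-embedding step, which you yourself flag as the main obstacle, is not carried out, and the model you propose for it cannot be correct as stated: the Standing Assumptions force $\Delta$ to contain an induced $4$-cycle, so $G_\Delta$ contains a subgroup that is virtually $\Z^2$ and $\Sigma_\Delta$ contains $2$-flats whose boundary circles lie in $\partial\Sigma_\Delta$; no universal cover of a hyperbolic $3$-orbifold with totally geodesic boundary contains such flats, so the sphere picture must be organized for a genuinely $\CAT(0)$-with-flats space glued along a tree of flats, and assembling the boundary across that tree (your proposed induction over Proposition~\ref{prop:keyidea1}) is the actual content of the theorem, not a routine gluing.

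Third, the ``standard visual dictionary'' you invoke for local cut points does not exist as a citable result for $\CAT(0)$ boundaries of non-hyperbolic right-angled Coxeter groups. The results of this type that are available -- including the ones this paper proves, Theorems~\ref{intro2} and~\ref{intro3} -- concern the \emph{Bowditch} boundary and rest on peripheral splittings \cite{MR1837220} and on Theorem~1.1 of \cite{HM1}, i.e.\ on relative hyperbolicity. The implication you need, namely that a local cut point of $\partial\Sigma_\Delta$ forces a splitting over a two-ended subgroup (and hence, by \cite{MR2466022}, a separating subcomplex of the listed types), would instead require a Papasoglu--Swenson type theorem for $\CAT(0)$ boundaries; that machinery itself presupposes properties (such as local connectedness) you have not established, and it is formulated for cut pairs rather than arbitrary local cut points. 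What is right in your proposal is the skeleton: Whyburn's characterization is the natural route for such a theorem, and your placement of hypotheses (1) and (2) -- (1) to make the closed peripheral disks disjoint, (2) to exclude separating induced paths of length $2$ -- matches how the paper itself deploys these conditions when it applies Theorem~\ref{basic} inside the proof of Theorem~\ref{SC-carpet}. But a skeleton whose three pillars are all unproven is not a proof.
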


%\begin{lem}
%Let $\Delta\subset \field{S}^2$ be a flag complex satisfying all conditions in Theorem \ref{basic}. Let $\sigma$ be the boundary of a region in $\field{S}^2-\Delta$. Then the limit set of subgroup $G_\sigma$ is a peripheral circle of the Sierpinski carpet $\partial \Sigma_\Gamma$.
%\end{lem}

%\begin{proof}
%Mimic Lemma 5.8
%\end{proof}

We now give a proof of Theorem~\ref{SC-carpet}. 

\begin{proof}[Proof of Theorem~\ref{SC-carpet}]
We assume that $\Delta$ satisfies both Conditions (1) and (2) of Theorem~\ref{SC-carpet} and we will prove that $G_\Delta$ has a non-trivial minimal peripheral structure with Bowditch boundary the Sierpinski carpet. We first show that the $\CAT(0)$ boundary $\partial \Sigma_\Delta$ is the Sierpinski carpet by using Theorem~\ref{basic}. Then we will use Theorem~\ref{cool} to prove that $G_\Delta$ has a non-trivial minimal peripheral structure with Bowditch boundary the Sierpinski carpet. Since the $1$-skeleton of $\Delta$ has no cut pair, $\Delta$ satisfies Condition (1) of Theorem~\ref{basic}. Also, the $1$-skeleton of $\Delta$ has no separating induced path of length $2$. 

Now, we show that $\Delta$ also satisfies Condition (2) of Theorem~\ref{basic}. By the way of contradiction that the boundary $\sigma$ of some region of $\field{S}^2-\Delta$ does not satisfy Condition (2). Then there is an induced path $\alpha$ of length $2$ in the $1$--skeleton of $\Delta$ such that $\sigma\cap \alpha$ consists of two distinct vertices $u$ and $v$. We now claim that $\alpha$ separates the $1$--skeleton of $\Delta$ which leads to a contradiction. We first assume that $\Delta=\alpha\cup \sigma$. Then $\Delta-\alpha=\sigma-\{u,v\}$ is disconnected. Therefore, $\alpha$ separates $\Delta$ which is also its $1$--skeleton. We now assume that there is a point $w$ in $\Delta-(\alpha\cup \sigma)$. We observe that $\sigma$ is the union of two paths $\sigma_1$ and $\sigma_2$ such that $\sigma_1\cap \sigma_2=\{u,v\}$. Since $\sigma$ bounds a region $R$ in $\field{S}^2-\Delta$, the path $\alpha$ lies completely outside the region $R$. Therefore, $\field{S}^2-(\alpha\cup \sigma)$ contains exactly three regions and $R$ is one of them. Since $R$ is also a region in $\field{S}^2-\Delta$, the point $w$ can not lie in $R$. Therefore, $w$ must lie in the region of $\field{S}^2-(\alpha\cup \sigma)$ which is bounded by $\alpha\cup \sigma_1$ or $\alpha\cup \sigma_2$ (say $\alpha\cup \sigma_1$). Therefore, $\alpha$ separates $w$ from a vertex of $\sigma_2$ in the $1$--skeleton of $\Delta$. This implies that $\alpha$ separates the $1$--skeleton of $\Delta$ which leads to a contradiction. Thus, $\Delta$ satisfies Condition (2) of Theorem~\ref{basic}. Therefore, the $\CAT(0)$ boundary $\partial \Sigma_\Delta$ is the Sierpinski carpet. 

Since the $1$-skeleton of $\Delta$ has no separating induced $4$-cycle, each induced $4$-cycle of $\Delta$ must bound a region of $\field{S}^2-\Delta$. Therefore, the intersection of two induced $4$-cycles of $\Delta$ is empty, or a vertex, or an edge. Thus by Theorem \ref{th1} the group $G_\Delta$ is relatively hyperbolic with respect to the collection $\PP$ of all right-angled Coxeter groups induced by some induced $4$-cycle of $\Delta$. We now prove the Bowditch boundary $\partial (G_\Gamma,\PP)$ is a Sierpinski carpet. 

We first claim that if $\sigma$ is the boundary of a region in $\field{S}^2-\Delta$, then the limit set of subgroup $G_\sigma$ is a peripheral circle of the Sierpinski carpet $\partial \Sigma_\Gamma$. We see that the intersection of $\sigma$ with an induced $4$-cycle of $\Delta$ is empty, or a vertex, or an edge. Therefore by Theorem \ref{th1} again $G_\Delta$ is relatively hyperbolic with respect to the collection $\bar{\PP}=\PP\cup \{G_\sigma\}$. By Theorem \ref{cool}, the Bowditch boundary $\partial (G_\Gamma,\bar{\PP})$ is obtained from the $\CAT(0)$ boundary $\partial \Sigma_\Gamma$ by identifying the limit set of each peripheral left coset of a subgroup in $\bar{\PP}$ to a point. Let $f$ be this quotient map. Let $v_{G_K}$ be the point in $\partial (G_\Gamma,\bar{\PP})$ that is the image of the limit set of subgroup $G_\sigma$ under the map $f$. Suppose by way of contradiction that the limit set of the subgroup $G_{\sigma}$ is a separating circle of the Sierpinski carpet $\partial\Sigma_{\Gamma}$. Then the point $v_{G_K}$ is a global cut point of $\partial (G_\Gamma,\bar{\PP})$. We know that $\sigma$ bounds a region of $\field{S}^2-\Delta$ while the intersection of the boundaries of two distinct regions of $\field{S}^2-\Delta$ is empty, or a vertex, or an edge. This implies that no induced subgraphs of $\sigma$ separates the $1$-skeleton of $\Delta$. Therefore by Theorem \ref{intro2} point $v_{G_K}$ is not a global cut point of $\partial (G_\Gamma,\bar{\PP})$ which is a contradiction. Thus the limit set of subgroup $G_\sigma$ is a peripheral circle of the Sierpinski carpet $\partial \Sigma_\Gamma$.

By Theorem \ref{cool} again, the Bowditch boundary $\partial (G_\Gamma,\PP)$ is obtained from the Sierpinski carpet $\partial \Sigma_\Gamma$ by identifying each peripheral circle of $\partial \Sigma_\Gamma$ which is the limit set of a peripheral left coset of a subgroup in $\PP$ to a point. Let $h$ be this quotient map. Then $h$ is $G_\Gamma$--equivariant. By Condition (1) some boundary $\gamma$ of a region of $\field{S}^2-\Delta$ is an $n$-cycle with $n\geq 5$. Therefore, the limit set of subgroup $G_\gamma$ is a peripheral circle $C$ of the Sierpinski carpet $\partial \Sigma_\Gamma$ by the above argument. Since $G_\gamma$ is not a group in $\PP$, the peripheral circle $C$ is not collapsed to a point via the map $h$. 

We now prove that the Bowditch boundary $\partial (G_\Gamma,\PP)$ is a Sierpinski carpet. Let $\mathcal{L}$ be the collection of all translates of the peripheral circle $C$ by group elements in $G_\Delta$. Then all peripheral circles in $\mathcal{L}$ survive via the map $h$. Therefore, we can consider $\mathcal{L}$ as a collection of pairwise disjoint circles in $\partial (G_\Gamma,\PP)$. Moreover, $\mathcal{L}$ is a $G_\Gamma$-invariant collection in $\partial (G_\Gamma,\PP)$ since the quotient map $h$ is $G_\Gamma$-equivariant. Also the action of $G_\Gamma$ on $\partial (G_\Gamma,\PP)$ is minimal (i.e. the orbit of each single point is dense in $\partial (G_\Gamma,\PP)$) by \cite {MR2922380}. Therefore, the union of all circles in $\mathcal{L}$ is dense in $\partial (G_\Gamma,\PP)$. Fix metrics on $\CAT(0)$ boundary $\partial \Sigma_\Gamma$ and Bowditch boundary $\partial (G_\Gamma,\PP)$. We consider $\mathcal{L}$ as a sequence $(C_n)$ of circles and we need to prove that $\diam(C_n)\to 0$ with respect to the metric on $\partial (G_\Gamma,\PP)$. 

Let $\epsilon$ be an arbitrary positive number and let $\{U_\alpha\}_{\alpha \in \Lambda}$ be a cover of $\partial (G_\Gamma,\PP)$ consisting of the open ball with diameter $\epsilon$. Then $\{h^{-1}(U_\alpha)\}_{\alpha \in \Lambda}$ is an open cover of $\partial \Sigma_\Gamma$. Since $\diam(C_n)\to 0$ with respect to the metric on $\partial \Sigma_\Gamma$ and $\partial \Sigma_\Gamma$ is a compact space, then each set $C_n$ lies in some member of the cover $\{h^{-1}(U_\alpha)\}_{\alpha \in \Lambda}$ for each $n$ sufficiently large. Therefore, each set $C_n=h(C_n)$ also lies in some member of the cover $\{U_\alpha\}_{\alpha \in \Lambda}$ for each $n$ sufficiently large. This implies that the diameter of each such circle $C_n$ is less than $\epsilon$ with respect to the metric on $\partial (G_\Gamma,\PP)$. This implies that $\diam(C_n)\to 0$ with respect to the metric on $\partial (G_\Gamma,\PP)$. Therefore, the Bowditch boundary $\partial (G_\Gamma,\PP)$ is a Sierpinski carpet by \cite{MR0099638}.

We now assume that $G_\Delta$ has a non-trivial minimal peripheral structure with Bowditch boundary the Sierpinski carpet. We will prove that $\Delta$ satisfies both Conditions (1) and (2). In fact, if $\Delta$ does not satisfy Condition (1), then $G_\Gamma$ is virtually a $3$-manifold group with empty boundary or tori boundary by Theorem~\ref{mani}. %Since $G_\Delta$ is a non-trivial relatively hyperbolic group, $G_\Delta$ is virtually a hyperbolic manifold group or a mixed manifold group. 
Therefore, the Bowditch boundary of $G_\Delta$ with respect to a non-trivial minimal peripheral structure is not the Sierpinski carpet which is a contradiction. Therefore, $\Delta$ must satisfy Condition (1). Also $\Delta$ must satisfy Condition (2). Otherwise by Theorem~\ref{intro2} and Theorem~\ref{intro3} the Bowditch boundary of $G_\Delta$ with respect to a non-trivial minimal peripheral structure contains a global cut point or a cut pair which is a contradiction.

\end{proof}

\subsection{Quasi-isometry classification of virtually graph manifold group RACGs}
\label{tree2}

In this subsection, we prove Theorem~\ref{thm:graphmanifold}. This theorem generalizes Theorem~1.1 in \cite{NT} by removing the condition ``triangle-free'' from the hypotheses.

In the rest of this section, we will assume that the planar flag complex $\Delta\subset \field{S}^2$ satisfies Standing Assumptions and the $1$-skeleton of $\Delta$ is a non-join $\mathcal{CFS}$ graph. In particular, $\Delta$ is not the suspension of an $n$-cycle. Moreover, $\Delta$ also does not contain a proper full subcomplex which is the suspension of an $n$-cycle since $\Delta\subset \field{S}^2$ is a planar flag complex. Let $T$ be a tree that encodes the structure of $\Delta$ as in Proposition~\ref{prop:keyidea1}. Since the $1$--skeleton of $\Delta$ is $\mathcal{CFS}$, it is shown in Proposition~\ref{prop:keyidea1} that each vertex subcomplex $\Delta_v$ is the suspension of a non-triangle graph with $3$ vertices. For the purpose of obtaining the quasi-isometry classification of our $\mathcal{CFS}$ right-angled Coxeter groups, the tree structure $T$ in Proposition~\ref{prop:keyidea1} is not the correct tree structure to use. So, we now modify the tree $T$ to obtain a new two-colored new tree that encodes the structure of $\Delta$.

\begin{cons}
\label{cons:keyidea2}
Step 1: We color an edge of $T$ by one of two colors: red and blue as follows. Let $e$ be an edge of $T$ with vertices $v_1$ and $v_2$. If either $\Delta_{v_1}$ or $\Delta_{v_2}$ is a suspension of a path of length $2$ we color the edge $e$ red. Next, we consider two cases. If $\Delta_{v_1}$ and $\Delta_{v_2}$ have the same suspension points, then we color the edge $e$ red. Otherwise, we color $e$ blue.

Step 2: Let $\mathcal{R}$ be the union of all red edges of $T$. We remark that $\mathcal{R}$ is not necessarily connected. We form a new tree $T_{r}$ from the tree $T$ by collapsing each component $C$ of $\mathcal{R}$ to a vertex labelled by $v_C$ and we associate each such new vertex $v_C$ to the complex $\Delta_{v_C}=\bigcup_{v\in V(C)} \Delta_v$. For each vertex $v$ of $T_r$ which is also a vertex of $T$ we still assign $v$ the complex $\Delta_v$ as in the previous tree $T$ structure. We observe that for each vertex $v$ of the new tree $T_r$ the associated vertex complex $\Delta_{v}$ is either one of the first two complexes in Figure~\ref{special} or a union of at least two special complexes which all share a pair of suspension vertices. Therefore, $\Delta_{v}$ is the suspension of a broken line $\ell_{v}$ which is distinct from a path of length $2$. We call the number of induced $4$-cycles of $\Delta_v\subset \field{S}^2$ that bound a region in $\field{S}^2-\Delta_v$ the \emph{weight} of $v$ denoted by $w(v)$. %It is also clear that the new tree $T_r$ encodes the structure of $\Gamma$ carrying Conditions (1), (2), and (3) of Lemma \ref{le3}. 
By construction, each edge $e$ of the new tree $T_r$ is also an edge of the old tree $T$. Therefore, we still assign the edge $e$ in the new tree $T_r$ the complex $\Delta_v$ as in the previous tree $T$ structure. 

We observe that for each edge $e$ of $T_r$ with vertices $v_1$ and $v_2$ the induced $4$-cycle $\Delta_e$ bounds a region in both $\field{S}^2-\Delta_{v_1}$ and $\field{S}^2-\Delta_{v_2}$. Therefore, the weight of each vertex $v$ is always greater than or equal to the degree of $v$ in $T_r$. Moreover, if $v_1$ and $v_2$ are two adjacent vertices in $T_r$, then suspension vertices of $\Delta_{v_1}$ are vertices of $\ell_{v_2}$ and similarly suspension vertices of $\Delta_{v_2}$ are vertices of $\ell_{v_1}$.

%Step 3: We now choose an appropriate cyclic ordering on the set $A_v$ for vertex $v$ of $T_r$. Two vertices $a$ and $a'$ in $A_v$ are adjacent if the pair $\{a,a'\}$ together with two suspension points of $\Gamma_v$ form a 4-cycles that does not strongly separates $\Gamma_v$ with respect to $f_{|\Gamma_v}$ (see Figure \ref{aja2}). We note that if $v_1$ and $v_2$ are endpoints of an edge $e$ of $T_r$, then by Lemma \ref{le1} the 4--cycles $\Gamma_e$ does not strongly separate each graph $\Gamma_{v_i}$ with respect to $f_{|\Gamma_{v_i}}$. Therefore, suspension vertices of $\Gamma_{v_1}$ are two adjacent elements in $A_{v_2}$ and similarly suspension vertices of $\Gamma_{v_2}$ are two adjacent elements in $A_{v_1}$. 

Step 3: We now color vertices of $T_r$. For each vertex $v$ of $T_r$, the complex $\Delta_v$ is a suspension of a broken line set $\ell_v$. Observe that in Step 2 that the weight of each vertex $v$ is always greater than or equal to the degree of $v$ in $T$. Therefore, we now color $v$ black if its weight is strictly greater than its degree. Otherwise, we color $v$ white.
\end{cons}

We now summarize some key properties of the tree $T_{r}$.

\begin{enumerate}
\item Each vertex $v$ of $T_r$ is associated to a full subcomplex $\Delta_v$ of $\Delta$ that is a suspension of a broken line $\ell_v$ which is distinct from a path of length $2$. %We call the number of induced $4$-cycles of $\Delta_v\subset \field{S}^2$ that bound a region in $\field{S}^2-\Delta_v$ the weight of $v$ denoted by $w(v)$. %We remark that in the new tree $T_r$ each vertex space $\Gamma_v$ may be no longer a suspension of three points so the number of elements of $A_v$ may be strictly greater than 3. 
The weight $w(v)$ of each vertex $v$ is required to be greater than or equal its degree in $T_r$. We color $v$ black if its weight is strictly greater than its degree. Otherwise, we color $v$ white.
\item $\Delta_v\neq\Delta_{v'}$ if $v\neq v'$ and $\bigcup_{v\in V(T_r)}^{}\Delta_v=\Delta$.
\item Each edge $e$ of $T_r$ is associated to an induced 4--cycle $\Delta_e$ of $\Delta$. If $e\neq e'$, then $\Delta_e\neq\Delta_{e'}$.
\item Two vertices $v_1$ and $v_2$ of $T_r$ are endpoints of the same edge $e$ if and only if $\Delta_{v_1}\cap\Delta_{v_2}=\Delta_e$. Moreover, the induced $4$-cycle $\Delta_e$ bounds a region in both $\field{S}^2-\Delta_{v_1}$ and $\field{S}^2-\Delta_{v_2}$. Also, suspension vertices of $\Delta_{v_1}$ are vertices of $\ell_{v_2}$. Similarly, suspension vertices of $\Delta_{v_2}$ are vertices of $\ell_{v_1}$. Lastly, if $V_1$ and $V_2$ are vertex sets of two components of $T_r$ removed the midpoint of $e$, then $(\bigcup_{v\in V_1}\Delta_v)\cap(\bigcup_{v\in V_2}\Delta_v)=\Gamma_e$. %is a strong visual decomposition of $\Gamma$ along $\Gamma_e$ with respect to $f$.
\end{enumerate}

\begin{defn}[Visual decomposition trees]
Let $\Delta\subset \field{S}^2$ be a planar flag complex satisfying Standing Assumptions with the $1$-skeleton a non-join $\mathcal{CFS}$ graph. A tree that encodes the structure of $\Delta$ carrying Properties (1), (2), (3), and (4) as above is called a \emph{visual decomposition tree} of $\Delta$.
\end{defn}

\begin{rem}
The existence of a visual decomposition tree for a planar flag complex satisfying Standing Assumptions with the $1$-skeleton a non-join $\mathcal{CFS}$ graph is guaranteed by Construction~\ref{cons:keyidea2}. %We do not know whether or not the existence of visual decomposition tree for $\Delta$ is unique. However, we only need the existence of a such tree. %Moreover, it is not hard to draw a visual decomposition tree for a planar flag complex satisfying Standing Assumptions with the $1$-skeleton a non-join $\mathcal{CFS}$ graph.
\end{rem}

\begin{figure}
\begin{tikzpicture}[scale=0.25]

\filldraw[fill=blue!40!white, draw=black] (0,0) rectangle (2,2);
\filldraw[fill=red!40!white, draw=black] (0,0) rectangle (-1,2);
\filldraw[fill=red!40!white, draw=black] (0,2) rectangle (2,3);
\filldraw[fill=red!40!white, draw=black] (2,2) rectangle (3,0);
\filldraw[fill=red!40!white, draw=black] (0,0) rectangle (2,-1);
\filldraw[fill=green!40!white, draw=black] (0,0) -- (-1,0) -- (0,-1) -- (0,0);
\filldraw[fill=green!40!white, draw=black] (2,0) -- (3,0) -- (2,-1) -- (2,0);
\filldraw[fill=green!40!white, draw=black] (2,2) -- (3,2) -- (2,3) -- (2,2);
\filldraw[fill=green!40!white, draw=black] (0,2) -- (-1,2) -- (0,3) -- (0,2);

\filldraw[fill=blue!40!white, draw=black] (0+5,0+5) rectangle (2+5,2+5);
\filldraw[fill=red!40!white, draw=black] (0+5,0+5) rectangle (-1+5,2+5);
\filldraw[fill=red!40!white, draw=black] (0+5,2+5) rectangle (2+5,3+5);
\filldraw[fill=red!40!white, draw=black] (2+5,2+5) rectangle (3+5,0+5);
\filldraw[fill=red!40!white, draw=black] (0+5,0+5) rectangle (2+5,-1+5);
\filldraw[fill=green!40!white, draw=black] (0+5,0+5) -- (-1+5,0+5) -- (0+5,-1+5) -- (0+5,0+5);
\filldraw[fill=green!40!white, draw=black] (2+5,0+5) -- (3+5,0+5) -- (2+5,-1+5) -- (2+5,0+5);
\filldraw[fill=green!40!white, draw=black] (2+5,2+5) -- (3+5,2+5) -- (2+5,3+5) -- (2+5,2+5);
\filldraw[fill=green!40!white, draw=black] (0+5,2+5) -- (-1+5,2+5) -- (0+5,3+5) -- (0+5,2+5);

\filldraw[fill=blue!40!white, draw=black] (0+5,0-5) rectangle (2+5,2-5);
\filldraw[fill=red!40!white, draw=black] (0+5,0-5) rectangle (-1+5,2-5);
\filldraw[fill=red!40!white, draw=black] (0+5,2-5) rectangle (2+5,3-5);
\filldraw[fill=red!40!white, draw=black] (2+5,2-5) rectangle (3+5,0-5);
\filldraw[fill=red!40!white, draw=black] (0+5,0-5) rectangle (2+5,-1-5);
\filldraw[fill=green!40!white, draw=black] (0+5,0-5) -- (-1+5,0-5) -- (0+5,-1-5) -- (0+5,0-5);
\filldraw[fill=green!40!white, draw=black] (2+5,0-5) -- (3+5,0-5) -- (2+5,-1-5) -- (2+5,0-5);
\filldraw[fill=green!40!white, draw=black] (2+5,2-5) -- (3+5,2-5) -- (2+5,3-5) -- (2+5,2-5);
\filldraw[fill=green!40!white, draw=black] (0+5,2-5) -- (-1+5,2-5) -- (0+5,3-5) -- (0+5,2-5);

\filldraw[fill=blue!40!white, draw=black] (0-5,0-5) rectangle (2-5,2-5);
\filldraw[fill=red!40!white, draw=black] (0-5,0-5) rectangle (-1-5,2-5);
\filldraw[fill=red!40!white, draw=black] (0-5,2-5) rectangle (2-5,3-5);
\filldraw[fill=red!40!white, draw=black] (2-5,2-5) rectangle (3-5,0-5);
\filldraw[fill=red!40!white, draw=black] (0-5,0-5) rectangle (2-5,-1-5);
\filldraw[fill=green!40!white, draw=black] (0-5,0-5) -- (-1-5,0-5) -- (0-5,-1-5) -- (0-5,0-5);
\filldraw[fill=green!40!white, draw=black] (2-5,0-5) -- (3-5,0-5) -- (2-5,-1-5) -- (2-5,0-5);
\filldraw[fill=green!40!white, draw=black] (2-5,2-5) -- (3-5,2-5) -- (2-5,3-5) -- (2-5,2-5);
\filldraw[fill=green!40!white, draw=black] (0-5,2-5) -- (-1-5,2-5) -- (0-5,3-5) -- (0-5,2-5);

\filldraw[fill=blue!40!white, draw=black] (0-5,0+5) rectangle (2-5,2+5);
\filldraw[fill=red!40!white, draw=black] (0-5,0+5) rectangle (-1-5,2+5);
\filldraw[fill=red!40!white, draw=black] (0-5,2+5) rectangle (2-5,3+5);
\filldraw[fill=red!40!white, draw=black] (2-5,2+5) rectangle (3-5,0+5);
\filldraw[fill=red!40!white, draw=black] (0-5,0+5) rectangle (2-5,-1+5);
\filldraw[fill=green!40!white, draw=black] (0-5,0+5) -- (-1-5,0+5) -- (0-5,-1+5) -- (0-5,0+5);
\filldraw[fill=green!40!white, draw=black] (2-5,0+5) -- (3-5,0+5) -- (2-5,-1+5) -- (2-5,0+5);
\filldraw[fill=green!40!white, draw=black] (2-5,2+5) -- (3-5,2+5) -- (2-5,3+5) -- (2-5,2+5);
\filldraw[fill=green!40!white, draw=black] (0-5,2+5) -- (-1-5,2+5) -- (0-5,3+5) -- (0-5,2+5);

\filldraw[fill=red!40!white, draw=black] (3,2) -- (5,4) -- (4,5) -- (2,3) -- (3,2);
\filldraw[fill=red!40!white, draw=black] (-1,2) -- (-3,4) -- (-2,5) -- (0,3) -- (-1,2);
\filldraw[fill=red!40!white, draw=black] (-1,0) -- (-3,-2) -- (-2,-3) -- (0,-1) -- (-1,0);
\filldraw[fill=red!40!white, draw=black] (3,0) -- (5,-2) -- (4,-3) -- (2,-1) -- (3,0);

\filldraw[fill=red!40!white, draw=black] (3,2-10) -- (5,4-10) -- (4,5-10) -- (2,3-10) -- (3,2-10);
\filldraw[fill=red!40!white, draw=black] (-1,2-10) -- (-3,4-10) -- (-2,5-10) -- (0,3-10) -- (-1,2-10);
\filldraw[fill=red!40!white, draw=black] (-1,0+10) -- (-3,-2+10) -- (-2,-3+10) -- (0,-1+10) -- (-1,0+10);
\filldraw[fill=red!40!white, draw=black] (3,0+10) -- (5,-2+10) -- (4,-3+10) -- (2,-1+10) -- (3,0+10);

\filldraw[fill=red!40!white, draw=black] (3-10,2) -- (5-10,4) -- (4-10,5) -- (2-10,3) -- (3-10,2);
\filldraw[fill=red!40!white, draw=black] (-1+10,2) -- (-3+10,4) -- (-2+10,5) -- (0+10,3) -- (-1+10,2);
\filldraw[fill=red!40!white, draw=black] (-1+10,0) -- (-3+10,-2) -- (-2+10,-3) -- (0+10,-1) -- (-1+10,0);
\filldraw[fill=red!40!white, draw=black] (3-10,0) -- (5-10,-2) -- (4-10,-3) -- (2-10,-1) -- (3-10,0);

\filldraw[fill=red!40!white, draw=black] (3-10,2-10) -- (5-10,4-10) -- (4-10,5-10) -- (2-10,3-10) -- (3-10,2-10);
\filldraw[fill=red!40!white, draw=black] (3+5,2+5) -- (5+5,4+5) -- (4+5,5+5) -- (2+5,3+5) -- (3+5,2+5);
\filldraw[fill=red!40!white, draw=black] (3+5,0-5) -- (5+5,-2-5) -- (4+5,-3-5) -- (2+5,-1-5) -- (3+5,0-5);
\filldraw[fill=red!40!white, draw=black] (3-10,0+10) -- (5-10,-2+10) -- (4-10,-3+10) -- (2-10,-1+10) -- (3-10,0+10);

\node at (1,-10) {$F(\Sigma_v$)};

%Davis complex

\filldraw[fill=blue!40!white, draw=red] (-20,0) rectangle (-18,2);
\filldraw[fill=blue!40!white, draw=red] (-20+3,0+3) rectangle (-18+3,2+3);
\filldraw[fill=blue!40!white, draw=red] (-20-3,0+3) rectangle (-18-3,2+3);
\filldraw[fill=blue!40!white, draw=red] (-20-3,0-3) rectangle (-18-3,2-3);
\filldraw[fill=blue!40!white, draw=red] (-20+3,0-3) rectangle (-18+3,2-3);

\draw[red] (-20,0)node[circle,fill,inner sep=1.0pt, color=green](1){}--(-21,-1); \draw[red] (-18,0)node[circle,fill,inner sep=1.0pt, color=green](1){}--(-17,-1); \draw[red] (-20,2)node[circle,fill,inner sep=1.0pt, color=green](1){}--(-21,3); \draw[red] (-18,2)node[circle,fill,inner sep=1.0pt, color=green](1){}--(-17,3);
\draw[red] (-23,-1)node[circle,fill,inner sep=1.0pt, color=green](1){}--(-24,0); \draw[red] (-23,-3)node[circle,fill,inner sep=1.0pt, color=green](1){}--(-24,-4); \draw[red] (-21,-3)node[circle,fill,inner sep=1.0pt, color=green](1){}--(-20,-4);
\draw[red] (-15,-1)node[circle,fill,inner sep=1.0pt, color=green](1){}--(-14,0); \draw[red] (-15,-3)node[circle,fill,inner sep=1.0pt, color=green](1){}--(-14,-4); \draw[red] (-17,-3)node[circle,fill,inner sep=1.0pt, color=green](1){}--(-18,-4);
\draw[red] (-17,5)node[circle,fill,inner sep=1.0pt, color=green](1){}--(-18,6); \draw[red] (-15,5)node[circle,fill,inner sep=1.0pt, color=green](1){}--(-14,6); \draw[red] (-15,3)node[circle,fill,inner sep=1.0pt, color=green](1){}--(-14,2);
\draw[red] (-23,3)node[circle,fill,inner sep=1.0pt, color=green](1){}--(-24,2); \draw[red] (-23,5)node[circle,fill,inner sep=1.0pt, color=green](1){}--(-24,6); \draw[red] (-21,5)node[circle,fill,inner sep=1.0pt, color=green](1){}--(-20,6);

\node at (-19,-6) {$\Sigma_v=\Sigma_{\ell_v}$};

%Defining graph

\draw[blue] (-35,1)node[circle,fill,inner sep=1.0pt, color=red](1){}--(-31,1)node[circle,fill,inner sep=1.0pt, color=red](1){};
\draw (-39,1)node[circle,fill,inner sep=1.0pt, color=red](1){};

\node at (-35,-2) {$\ell_v$};

\end{tikzpicture}

\caption{A broken line $\ell_v$, the associated Davis complex $\Sigma_v$, and the fattening $F(\Sigma_v)$ of $\Sigma_v$}
\label{coolcool}

\end{figure}
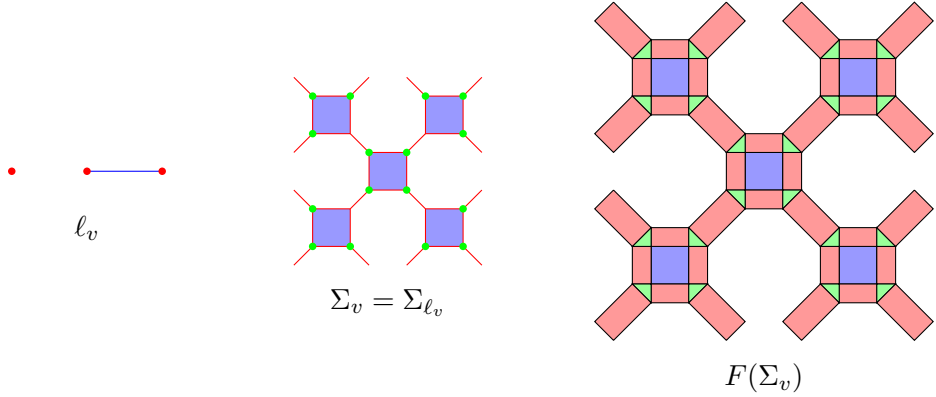

Let $\Delta\subset \field{S}^2$ be a planar flag complex satisfying Standing Assumptions with the $1$-skeleton a non-join $\mathcal{CFS}$ graph. Let $T_r$ be a visual decomposition tree of $\Delta$. Since $\Delta$ is planar, $G_{\Delta}$ is virtually a $3$--manifold group by Theorem~\ref{mani}. However, for the purpose of obtaining a quasi-isometry classification we will construct explicitly a $3$--manifold $Y$ on which $G_{\Delta}$ acts properly and cocompactly. We note that the construction of the manifold $Y$ is associated to the graph $T_{r}$. Therefore, we can import the work of Behrstock-Neumann \cite{MR2376814} to prove Theorem~\ref{thm:graphmanifold}.

\begin{cons}
\label{cons:constructionmanifold}
We now construct a 3-manifold $Y$ on which the right-angled Coxeter group $G_\Delta$ acts properly and cocompactly. For each vertex $v$ of $T_r$, the complex $\Delta_v$ is a suspension of a broken line $\ell_v$ of $\Delta$. We now let $b$ and $c$ be suspension vertices and let $\Sigma_v$ be the Davis complex associated to the broken line $\ell_v$. We fatten the $1$-skeleton of $\Sigma_v$ to obtain a universal cover of a hyperbolic surface with boundary as follows:

Let $n$ be the number vertices of  $\ell_v$. We replace each vertex of the $1$-skeleton $\Sigma^{(1)}_v$ by a regular $n$--gon with sides labelled by vertices of $\ell_v$. We assume that two edges of such an $n$--gon labeled by $a_1$ and $a_2$ in $V(\ell_v)$ are adjacent if and only if either $a_1$ and $a_2$ are vertices of an edge of $\ell_v$ or the set $\{a_1, a_2,b,c\}$ forms an induced $4$--cycle of $\Delta_v$ that bounds a region in $\field{S}^2-\Delta_v$. We also assume the length side of the $n$--gon is $1/2$. We replace each edge $E$ labelled by $a_i$ by a strip $E\times[-1/4,1/4]$. We label each side of length $1$ of the strip $E\times[-1/4,1/4]$ by $a_i$ and we identify the edge $E$ to $E\times\{0\}$ of the strip. If $u$ is an endpoint of the edge $E$ of $\Sigma^{(1)}_v$, then the edge $\{u\}\times[-1/4,1/4]$ is identified to the side labelled by $a_i$ of the $n$--gon that replaces $u$. 

We observe that for each pair of adjacent vertices $a_1$ and $a_2$ of $\ell_v$ we have an induced $4$--cycle in $\Sigma^{(1)}_v$ with one pair of opposite sides labeled by $a_1$ and the other pair of opposite sides labeled by $a_2$. We note that this $4$--cycle bounds a $2$--cell in $\Sigma_v$. In the ``fattening'' of $\Sigma^{(1)}_v$ constructed above we also have such $4$--cycles and we also fill each with a $2$--cell as in $\Sigma_v$. We denote $F(\Sigma_v)$ to be the resulting space (See Figure~\ref{coolcool}). Clearly, $G_{\ell_v}$ acts properly and cocompactly on the space $F(\Sigma_v)$. Additionally, $F(\Sigma_v)$ is a simply connected surface with boundary and each boundary component of $F(\Sigma_v)$ is an infinite concatenation of edges labelled by $a_{1}$ and $a_2$ where $a_1$ and $a_2$ are vertices of $\ell_v$ such that the set $\{a_1, a_2,b,c\}$ forms an induced $4$--cycle that bounds a region in $\field{S}^2-\Delta_v$. We denote such a boundary by $\alpha_{a_1,a_2}$.

The group $G_{\{b,c\}}$ acts on the line $\alpha$ that is a concatenation of edges labelled by $b$ and $c$ by edge reflections. Let $P_v=F(\Sigma_v)\times \alpha$ and equip $P_v$ with the product metric. Then, the group $G_{\Delta_v}$ acts properly and cocompactly on $P_v$ in the obvious way. The space $P_v$ is the universal cover of the trivial circle bundle of a hyperbolic surface with nonempty boundary.

Moreover, for each pair of vertices $a_1$ and $a_2$ of $\ell_v$ such that the set $\{a_1, a_2,b,c\}$ forms an induced $4$--cycle that bounds a region in $\field{S}^2-\Delta_v$ the right-angled Coxeter groups generated by $\{a_{1},a_2,b,c\}$ acts on the boundary $\alpha_{a_1,a_2}\times \alpha$ in a way analogous to its action on the Davis complex. We label this plane by $\{a_{1},a_2,b,c\}$. 

If $v_1$ and $v_2$ are two adjacent vertices in $T_r$, then the pair of suspension vertices $(a_1,a_2)$ of $\Delta_{v_1}$ is a pair of vertices of $\ell_{v_2}$ and the pair of suspension vertices $(b_1,b_2)$ of $\Delta_{v_2}$ is a pair of of vertices of $\ell_{v_1}$. Moreover, the set $\{a_1, a_2,b_1,b_2\}$ forms an induced $4$--cycle that bounds a region in both $\field{S}^2-\Delta_{v_1}$ and $\field{S}^2-\Delta_{v_2}$. Therefore, the spaces $P_{v_1}$ and $P_{v_2}$ have two Euclidean planes which are labeled by $\{a_1,a_2,b_1,b_2\}$ as constructed above. Thus, using the Bass-Serre tree $\tilde{T}_r$ of the decomposition of $G_\Delta$ as tree $T_r$ of subgroups we can form a 3-manifold $Y$ by gluing copies of $P_v$ appropriately and we obtain a proper, cocompact action of $G_\Delta$ on $Y$.
\end{cons}

We are ready to prove the quasi-isometry classification theorem. The proof is identical with the proof of (3) in Theorem~1.1 \cite{NT}.

\begin{proof}[Proof of Theorem~\ref{thm:graphmanifold}]
%Since the Bass-Serre tree $\tilde{T}_r$ weakly covers $T_r$, the trees $\tilde{T}_r$ and $T_r$ are bisimilar. 

We can color vertices of the Bass-Serre tree $\tilde{T}_r$ so that the quotient map $q: \tilde{T}_r\to T_r$ preserves the vertex coloring. Moreover, the map $q: \tilde{T}_r\to T_r$ is a week covering. Therefore, the trees $\tilde{T}_r$ and $T_r$ are bisimilar. (See Definitions \ref{weaklycover} and \ref{bisimilarity1} for the definitions of weak covering and bisimilarity.) We observe that a vertex of $\tilde{T}_r$ is colored black if and only if the corresponding Seifert manifold contains a component of the boundary of $Y$. Using the proof of Theorem 3.2 in \cite{MR2376814}, we obtain the proof of Theorem~\ref{thm:graphmanifold}.
\end{proof}

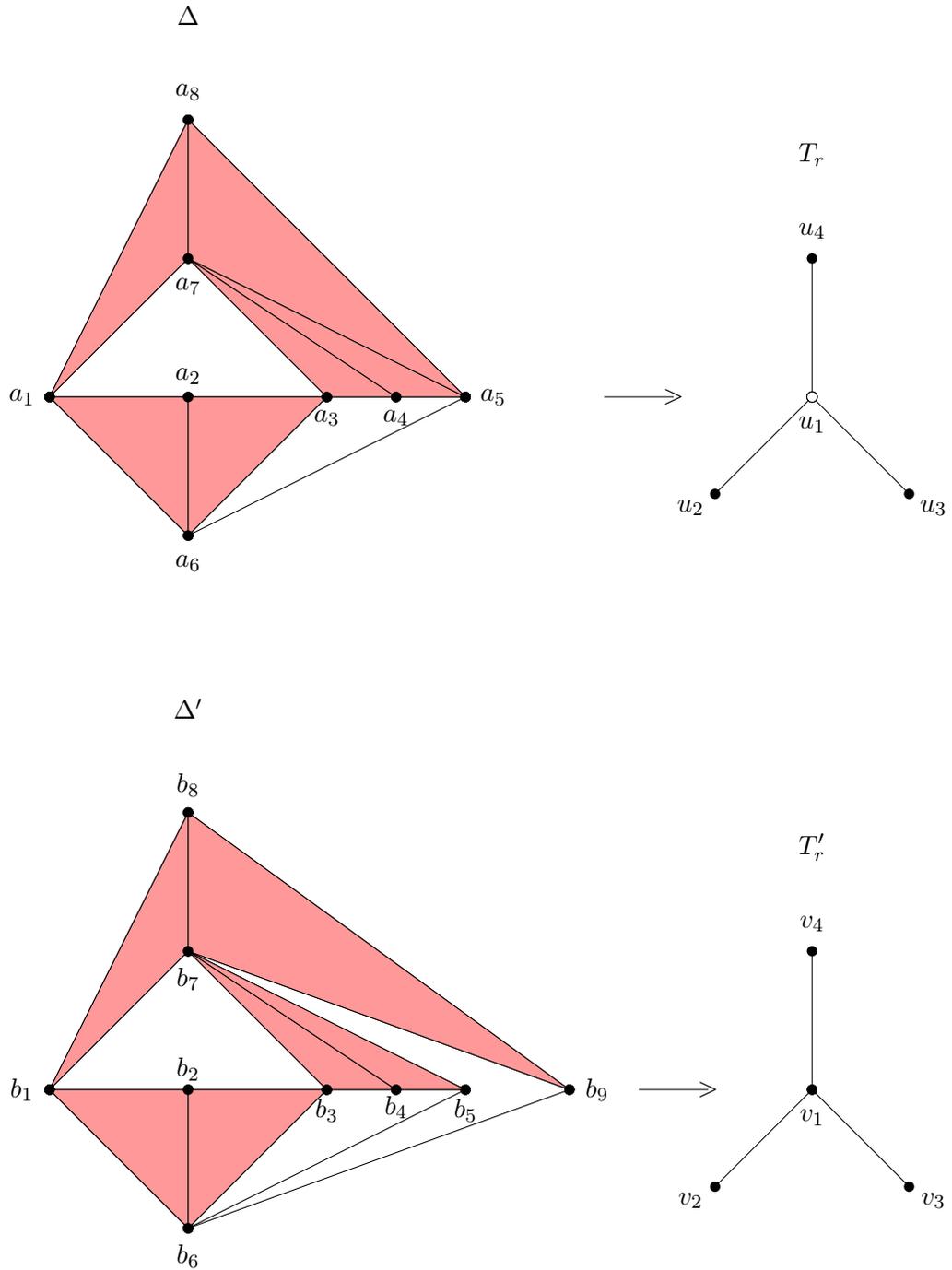
\begin{figure}
\begin{tikzpicture}[scale=1.0]

\draw (0,0) node[circle,fill,inner sep=1.5pt, color=black](1){} -- (2,0) node[circle,fill,inner sep=1.5pt, color=black](1){}-- (4,0) node[circle,fill,inner sep=1.5pt, color=black](1){}-- (5,0) node[circle,fill,inner sep=1.5pt, color=black](1){} -- (6,0) node[circle,fill,inner sep=1.5pt, color=black](1){};

\draw (2,2) node[circle,fill,inner sep=1.5pt, color=black](1){} -- (0,0) node[circle,fill,inner sep=1.5pt, color=black](1){};
\draw (2,2) node[circle,fill,inner sep=1.5pt, color=black](1){} -- (4,0) node[circle,fill,inner sep=1.5pt, color=black](1){};
\draw (2,2) node[circle,fill,inner sep=1.5pt, color=black](1){} -- (6,0) node[circle,fill,inner sep=1.5pt, color=black](1){};

\draw (2,-2) node[circle,fill,inner sep=1.5pt, color=black](1){} -- (0,0) node[circle,fill,inner sep=1.5pt, color=black](1){};
\draw (2,-2) node[circle,fill,inner sep=1.5pt, color=black](1){} -- (4,0) node[circle,fill,inner sep=1.5pt, color=black](1){};
\draw (2,-2) node[circle,fill,inner sep=1.5pt, color=black](1){} -- (6,0) node[circle,fill,inner sep=1.5pt, color=black](1){};

\draw (0,0) node[circle,fill,inner sep=1.5pt, color=black](1){} -- (2,4) node[circle,fill,inner sep=1.5pt, color=black](1){} -- (6,0) node[circle,fill,inner sep=1.5pt, color=black](1){};

\node at (-.4,0) {$a_1$};\node at (2,0.3) {$a_2$};\node at (4,-0.3) {$a_3$};\node at (5,-0.3) {$a_4$};\node at (6.4,0) {$a_5$}; \node at (2,-2.4) {$a_6$};\node at (2,1.6) {$a_7$};\node at (2,4.4) {$a_8$}; \draw (8,0)--(9,0);\node at (9,0) {$>$}; 

\node at (2.0,5.5) {$\Delta$};

\filldraw[fill=red!40!white, draw=black] (0,0) node[circle,fill,inner sep=1.5pt, color=black](1){} -- (2,-2) node[circle,fill,inner sep=1.5pt, color=black](1){} -- (2,0)node[circle,fill,inner sep=1.5pt, color=black](1){} -- (0,0)node[circle,fill,inner sep=1.5pt, color=black](1){};
\filldraw[fill=red!40!white, draw=black] (4,0) node[circle,fill,inner sep=1.5pt, color=black](1){} -- (2,-2) node[circle,fill,inner sep=1.5pt, color=black](1){} -- (2,0)node[circle,fill,inner sep=1.5pt, color=black](1){} -- (4,0)node[circle,fill,inner sep=1.5pt, color=black](1){};

\filldraw[fill=red!40!white, draw=black] (4,0) node[circle,fill,inner sep=1.5pt, color=black](1){} -- (5,0) node[circle,fill,inner sep=1.5pt, color=black](1){} -- (2,2)node[circle,fill,inner sep=1.5pt, color=black](1){} -- (4,0)node[circle,fill,inner sep=1.5pt, color=black](1){};
\filldraw[fill=red!40!white, draw=black] (5,0) node[circle,fill,inner sep=1.5pt, color=black](1){} -- (6,0) node[circle,fill,inner sep=1.5pt, color=black](1){} -- (2,2)node[circle,fill,inner sep=1.5pt, color=black](1){} -- (5,0)node[circle,fill,inner sep=1.5pt, color=black](1){};

\filldraw[fill=red!40!white, draw=black] (0,0) node[circle,fill,inner sep=1.5pt, color=black](1){} -- (2,2) node[circle,fill,inner sep=1.5pt, color=black](1){} -- (2,4)node[circle,fill,inner sep=1.5pt, color=black](1){} -- (0,0)node[circle,fill,inner sep=1.5pt, color=black](1){};
\filldraw[fill=red!40!white, draw=black] (6,0) node[circle,fill,inner sep=1.5pt, color=black](1){} -- (2,2) node[circle,fill,inner sep=1.5pt, color=black](1){} -- (2,4)node[circle,fill,inner sep=1.5pt, color=black](1){} -- (6,0)node[circle,fill,inner sep=1.5pt, color=black](1){};

\draw (11,0) node[circle,draw=black,inner sep=1.5pt, fill=white](1){} -- (11,2) node[circle,fill,inner sep=1.5pt, color=black](1){};
\draw (11,0) node[circle,draw=black,inner sep=1.5pt, fill=white](1){} -- (9.6,-1.4) node[circle,fill,inner sep=1.5pt, color=black](1){};
\draw (11,0) node[circle,draw=black,inner sep=1.5pt, fill=white](1){} -- (12.4,-1.4) node[circle,fill,inner sep=1.5pt, color=black](1){};

\node at (11,-0.4) {$u_1$};\node at (9.25,-1.6) {$u_2$};\node at (12.75,-1.6) {$u_3$};\node at (11,2.4) {$u_4$};

\node at (11,3.5) {$T_r$};

\draw (0,-10) node[circle,fill,inner sep=1.5pt, color=black](1){} -- (2,-10) node[circle,fill,inner sep=1.5pt, color=black](1){}-- (4,-10) node[circle,fill,inner sep=1.5pt, color=black](1){}-- (5,-10) node[circle,fill,inner sep=1.5pt, color=black](1){} -- (6,-10) node[circle,fill,inner sep=1.5pt, color=black](1){};

\draw (2,-8) node[circle,fill,inner sep=1.5pt, color=black](1){} -- (0,-10) node[circle,fill,inner sep=1.5pt, color=black](1){};
\draw (2,-8) node[circle,fill,inner sep=1.5pt, color=black](1){} -- (4,-10) node[circle,fill,inner sep=1.5pt, color=black](1){};
\draw (2,-8) node[circle,fill,inner sep=1.5pt, color=black](1){} -- (6,-10) node[circle,fill,inner sep=1.5pt, color=black](1){};
\draw (2,-8) node[circle,fill,inner sep=1.5pt, color=black](1){} -- (7.5,-10) node[circle,fill,inner sep=1.5pt, color=black](1){};

\draw (2,-12) node[circle,fill,inner sep=1.5pt, color=black](1){} -- (0,-10) node[circle,fill,inner sep=1.5pt, color=black](1){};
\draw (2,-12) node[circle,fill,inner sep=1.5pt, color=black](1){} -- (4,-10) node[circle,fill,inner sep=1.5pt, color=black](1){};
\draw (2,-12) node[circle,fill,inner sep=1.5pt, color=black](1){} -- (6,-10) node[circle,fill,inner sep=1.5pt, color=black](1){};
\draw (2,-12) node[circle,fill,inner sep=1.5pt, color=black](1){} -- (7.5,-10) node[circle,fill,inner sep=1.5pt, color=black](1){};

\draw (0,-10) node[circle,fill,inner sep=1.5pt, color=black](1){} -- (2,-6) node[circle,fill,inner sep=1.5pt, color=black](1){} -- (7.5,-10) node[circle,fill,inner sep=1.5pt, color=black](1){};

\node at (-.4,-10) {$b_1$};\node at (2,-9.7) {$b_2$};\node at (4,-10.3) {$b_3$};\node at (5,-10.25) {$b_4$};\node at (6,-10.3) {$b_5$}; \node at (2,-12.4) {$b_6$};\node at (2,-8.4) {$b_7$};\node at (2,-5.6) {$b_8$}; \node at (7.9,-10) {$b_9$};\draw (8.5,-10)--(9.5,-10);\node at (9.5,-10) {$>$}; 

\node at (2.0,-4.5) {$\Delta'$};

\filldraw[fill=red!40!white, draw=black] (0,-10) node[circle,fill,inner sep=1.5pt, color=black](1){} -- (2,-2-10) node[circle,fill,inner sep=1.5pt, color=black](1){} -- (2,0-10)node[circle,fill,inner sep=1.5pt, color=black](1){} -- (0,0-10)node[circle,fill,inner sep=1.5pt, color=black](1){};
\filldraw[fill=red!40!white, draw=black] (4,0-10) node[circle,fill,inner sep=1.5pt, color=black](1){} -- (2,-2-10) node[circle,fill,inner sep=1.5pt, color=black](1){} -- (2,0-10)node[circle,fill,inner sep=1.5pt, color=black](1){} -- (4,0-10)node[circle,fill,inner sep=1.5pt, color=black](1){};

\filldraw[fill=red!40!white, draw=black] (4,0-10) node[circle,fill,inner sep=1.5pt, color=black](1){} -- (5,0-10) node[circle,fill,inner sep=1.5pt, color=black](1){} -- (2,2-10)node[circle,fill,inner sep=1.5pt, color=black](1){} -- (4,0-10)node[circle,fill,inner sep=1.5pt, color=black](1){};
\filldraw[fill=red!40!white, draw=black] (5,0-10) node[circle,fill,inner sep=1.5pt, color=black](1){} -- (6,0-10) node[circle,fill,inner sep=1.5pt, color=black](1){} -- (2,2-10)node[circle,fill,inner sep=1.5pt, color=black](1){} -- (5,0-10)node[circle,fill,inner sep=1.5pt, color=black](1){};

\filldraw[fill=red!40!white, draw=black] (0,0-10) node[circle,fill,inner sep=1.5pt, color=black](1){} -- (2,2-10) node[circle,fill,inner sep=1.5pt, color=black](1){} -- (2,4-10)node[circle,fill,inner sep=1.5pt, color=black](1){} -- (0,0-10)node[circle,fill,inner sep=1.5pt, color=black](1){};
\filldraw[fill=red!40!white, draw=black] (7.5,0-10) node[circle,fill,inner sep=1.5pt, color=black](1){} -- (2,2-10) node[circle,fill,inner sep=1.5pt, color=black](1){} -- (2,4-10)node[circle,fill,inner sep=1.5pt, color=black](1){} -- (7.5,0-10)node[circle,fill,inner sep=1.5pt, color=black](1){};

\draw (11,-10) node[circle,fill,inner sep=1.5pt, color=black](1){} -- (11,-8) node[circle,fill,inner sep=1.5pt, color=black](1){};
\draw (11,-10) node[circle,fill,inner sep=1.5pt, color=black](1){} -- (9.6,-11.4) node[circle,fill,inner sep=1.5pt, color=black](1){};
\draw (11,-10) node[circle,fill,inner sep=1.5pt, color=black](1){} -- (12.4,-11.4) node[circle,fill,inner sep=1.5pt, color=black](1){};

\node at (11,-10.4) {$v_1$};\node at (9.25,-11.6) {$v_2$};\node at (12.75,-11.6) {$v_3$};\node at (11,-7.6) {$v_4$};

\node at (11,-6.5) {$T_r'$};

\end{tikzpicture}

\caption{The groups $G_{\Delta}$ and $G_{\Delta'}$ are not quasi-isometric because the two corresponding decomposition trees $T_r$ and $T_r'$ are not bisimilar.}
\label{afifthf}
\end{figure}

\begin{exmp}
\label{niceexample}
Let $\Delta$ and $\Delta'$ be flag complexes in Figure \ref{afifthf}. A visual decomposition tree $T_r$ of $\Delta$ is shown in the same figure with the following information. The complex $\Delta_{u_1}$ is the suspension of the broken line with three vertices $a_1$, $a_3$, and $a_5$ with two suspension vertices $a_6$ and $a_7$. The complex $\Delta_{u_2}$ is the suspension of the broken line with three vertices $a_2$, $a_6$, and $a_7$ with two suspension vertices $a_1$ and $a_3$. The complex $\Delta_{u_3}$ is the suspension of the broken line with three vertices $a_4$, $a_6$, and $a_7$ with two suspension vertices $a_3$ and $a_5$. The complex $\Delta_{u_4}$ is the suspension of the broken line with three vertices $a_6$, $a_7$, and $a_8$ with two suspension vertices $a_1$ and $a_5$. We note, that the vertices $u_2$, $u_3$, and $u_4$ all have weight $2$ and the vertex $u_1$ has weight $3$. By comparing their degrees, the vertices $u_2$, $u_3$, and $u_4$ are colored black while $u_1$ is colored white.

Similarly, a visual decomposition tree $T_r'$ of $\Delta'$ is also shown in Figure \ref{afifthf} with the following information. The complex $\Delta_{v_1}$ is the suspension of the broken line with four vertices $b_1$, $b_3$, $b_5$, and $b_9$ with two suspension vertices $b_6$ and $b_7$. The complex $\Delta_{v_2}$ is the suspension of the broken line with three vertices $b_2$, $b_6$, and $b_7$ with two suspension vertices $b_1$ and $b_3$. The complex $\Delta_{v_3}$ is the suspension of the broken line with three vertices $b_4$, $b_6$, and $b_7$ with two suspension vertices $b_3$ and $b_5$. The complex $\Delta_{v_4}$ is the suspension of the broken line with three vertices $b_6$, $b_7$, and $b_8$ with two suspension vertices $b_1$ and $b_9$. We observe that the vertices $v_2$, $v_3$, and $v_4$ all have weight $2$ and the vertex $v_1$ has weight $4$. By comparing their degrees we see that all vertices of $T_r'$ are colored black. Thus, the visual decomposition trees $T_r$ and $T_r'$ are not bisimilar even though they are isomorphic if we ignore the vertex colors. Therefore, the groups $G_\Gamma$ and $G_{\Gamma'}$ are not quasi-isometric.
\end{exmp} 

\bibliographystyle{alpha}
\bibliography{Tran}

\def\cprime{$'$}
\begin{thebibliography}{BFRHS18}

\bibitem[BD14]{MR3421592}
Jason Behrstock and Cornelia Dru{\c{t}}u.
\newblock Divergence, thick groups, and short conjugators.
\newblock {\em Illinois J. Math.}, 58(4):939--980, 2014.

\bibitem[BDM09]{MR2501302}
Jason Behrstock, Cornelia Dru{\c{t}}u, and Lee Mosher.
\newblock Thick metric spaces, relative hyperbolicity, and quasi-isometric
  rigidity.
\newblock {\em Math. Ann.}, 344(3):543--595, 2009.

\bibitem[BFRHS18]{MR3801467}
Jason Behrstock, Victor Falgas-Ravry, Mark~F. Hagen, and Tim Susse.
\newblock Global structural properties of random graphs.
\newblock {\em Int. Math. Res. Not. IMRN}, (5):1411--1441, 2018.

\bibitem[BH99]{MR1744486}
Martin~R. Bridson and Andr{\'e} Haefliger.
\newblock {\em Metric spaces of non-positive curvature}, volume 319 of {\em
  Grundlehren der Mathematischen Wissenschaften [Fundamental Principles of
  Mathematical Sciences]}.
\newblock Springer-Verlag, Berlin, 1999.

\bibitem[BHS17]{MR3623669}
Jason Behrstock, Mark~F. Hagen, and Alessandro Sisto.
\newblock Thickness, relative hyperbolicity, and randomness in {C}oxeter
  groups.
\newblock {\em Algebr. Geom. Topol.}, 17(2):705--740, 2017.
\newblock With an appendix written jointly with Pierre-Emmanuel Caprace.

\bibitem[BN08]{MR2376814}
Jason~A. Behrstock and Walter~D. Neumann.
\newblock Quasi-isometric classification of graph manifold groups.
\newblock {\em Duke Math. J.}, 141(2):217--240, 2008.

\bibitem[Bow01]{MR1837220}
B.~H. Bowditch.
\newblock Peripheral splittings of groups.
\newblock {\em Trans. Amer. Math. Soc.}, 353(10):4057--4082, 2001.

\bibitem[Bow12]{MR2922380}
B.~H. Bowditch.
\newblock Relatively hyperbolic groups.
\newblock {\em Internat. J. Algebra Comput.}, 22(3):1250016, 66, 2012.

\bibitem[Cap09]{MR2665193}
Pierre-Emmanuel Caprace.
\newblock Buildings with isolated subspaces and relatively hyperbolic {C}oxeter
  groups.
\newblock {\em Innov. Incidence Geom.}, 10:15--31, 2009.

\bibitem[Cap15]{MR3450952}
Pierre-Emmanuel Caprace.
\newblock Erratum to ``{B}uildings with isolated subspaces and relatively
  hyperbolic {C}oxeter groups'' [ {MR}2665193].
\newblock {\em Innov. Incidence Geom.}, 14:77--79, 2015.

\bibitem[CC92]{MR1036000}
J.~W. Cannon and Daryl Cooper.
\newblock A characterization of cocompact hyperbolic and finite-volume
  hyperbolic groups in dimension three.
\newblock {\em Trans. Amer. Math. Soc.}, 330(1):419--431, 1992.

\bibitem[CM17]{MR3604915}
Christopher~H. Cashen and Alexandre Martin.
\newblock Quasi-isometries between groups with two-ended splittings.
\newblock {\em Math. Proc. Cambridge Philos. Soc.}, 162(2):249--291, 2017.

\bibitem[Dav08]{MR2360474}
Michael~W. Davis.
\newblock {\em The geometry and topology of {C}oxeter groups}, volume~32 of
  {\em London Mathematical Society Monographs Series}.
\newblock Princeton University Press, Princeton, NJ, 2008.

\bibitem[DO01]{MR1812434}
Michael~W. Davis and Boris Okun.
\newblock Vanishing theorems and conjectures for the {$\ell^2$}-homology of
  right-angled {C}oxeter groups.
\newblock {\em Geom. Topol.}, 5:7--74, 2001.

\bibitem[Dro03]{MR1974626}
Carl Droms.
\newblock A complex for right-angled {C}oxeter groups.
\newblock {\em Proc. Amer. Math. Soc.}, 131(8):2305--2311, 2003.

\bibitem[DST]{DST}
Pallavi Dani, Emily Stark, and Anne Thomas.
\newblock Commensurability for certain right-angled {C}oxeter groups and
  geometric amalgams of free groups.
\newblock Submitted. arXiv:1610.06245.

\bibitem[DT]{DATA}
Pallavi Dani and Anne Thomas.
\newblock Bowditch's {JSJ} tree and the quasi-isometry classification of
  certain {C}oxeter groups.
\newblock Preprint. arXiv:1402.6224.

\bibitem[DT15]{MR3314816}
Pallavi Dani and Anne Thomas.
\newblock Divergence in right-angled {C}oxeter groups.
\newblock {\em Trans. Amer. Math. Soc.}, 367(5):3549--3577, 2015.

\bibitem[DtK18]{DK18}
Cornelia Dru\c~tu and Michael Kapovich.
\newblock {\em Geometric group theory}, volume~63 of {\em American Mathematical
  Society Colloquium Publications}.
\newblock American Mathematical Society, Providence, RI, 2018.
\newblock With an appendix by Bogdan Nica.

\bibitem[EFW07]{MR2402598}
Alex Eskin, David Fisher, and Kevin Whyte.
\newblock Quasi-isometries and rigidity of solvable groups.
\newblock {\em Pure Appl. Math. Q.}, 3(4, Special Issue: In honor of Grigory
  Margulis. Part 1):927--947, 2007.

\bibitem[Ger94a]{Gersten94}
S.~M. Gersten.
\newblock Divergence in {$3$}-manifold groups.
\newblock {\em Geom. Funct. Anal.}, 4(6):633--647, 1994.

\bibitem[Ger94b]{MR1254309}
S.~M. Gersten.
\newblock Quadratic divergence of geodesics in {${\rm CAT}(0)$} spaces.
\newblock {\em Geom. Funct. Anal.}, 4(1):37--51, 1994.

\bibitem[GM08]{MR2448064}
Daniel Groves and Jason~Fox Manning.
\newblock Dehn filling in relatively hyperbolic groups.
\newblock {\em Israel J. Math.}, 168:317--429, 2008.

\bibitem[Gro81]{MR623534}
Mikhael Gromov.
\newblock Groups of polynomial growth and expanding maps.
\newblock {\em Inst. Hautes \'{E}tudes Sci. Publ. Math.}, 53:53--73, 1981.

\bibitem[Gro13]{MR3152211}
Bradley~W. Groff.
\newblock Quasi-isometries, boundaries and {JSJ}-decompositions of relatively
  hyperbolic groups.
\newblock {\em J. Topol. Anal.}, 5(4):451--475, 2013.

\bibitem[Hau]{HM1}
Matthew Haulmark.
\newblock Local cut points and splittings of relatively hyperbolic groups.
\newblock Submitted. arXiv:1708.02855.

\bibitem[HK05]{MR2175151}
G.~Christopher Hruska and Bruce Kleiner.
\newblock Hadamard spaces with isolated flats.
\newblock {\em Geom. Topol.}, 9:1501--1538, 2005.
\newblock With an appendix by the authors and Mohamad Hindawi.

\bibitem[JNW]{KSD}
Kasia Jankiewicz, Sergey Norin, and Daniel~T. Wise.
\newblock Virtually fibering right-angled {C}oxeter groups.
\newblock To appear in J. Inst. Math. Jussieu.

\bibitem[Joh79]{MR551744}
Klaus Johannson.
\newblock {\em Homotopy equivalences of {$3$}-manifolds with boundaries},
  volume 761 of {\em Lecture Notes in Mathematics}.
\newblock Springer, Berlin, 1979.

\bibitem[JS79]{MR539411}
William~H. Jaco and Peter~B. Shalen.
\newblock Seifert fibered spaces in {$3$}-manifolds.
\newblock {\em Mem. Amer. Math. Soc.}, 21(220):viii+192, 1979.

\bibitem[KL95]{KaLeeb95}
M.~Kapovich and B.~Leeb.
\newblock On asymptotic cones and quasi-isometry classes of fundamental groups
  of {$3$}-manifolds.
\newblock {\em Geom. Funct. Anal.}, 5(3):582--603, 1995.

\bibitem[KL97]{MR1440310}
Michael Kapovich and Bernhard Leeb.
\newblock Quasi-isometries preserve the geometric decomposition of {H}aken
  manifolds.
\newblock {\em Invent. Math.}, 128(2):393--416, 1997.

\bibitem[KL98]{KL98}
M.~Kapovich and B.~Leeb.
\newblock {$3$}-manifold groups and nonpositive curvature.
\newblock {\em Geom. Funct. Anal.}, 8(5):841--852, 1998.

\bibitem[Lev18]{IL}
Ivan Levcovitz.
\newblock Divergence of {$\rm CAT(0)$} cube complexes and {C}oxeter groups.
\newblock {\em Algebr. Geom. Topol.}, 18(3):1633--1673, 2018.

\bibitem[MT09]{MR2466022}
Michael Mihalik and Steven Tschantz.
\newblock Visual decompositions of {C}oxeter groups.
\newblock {\em Groups Geom. Dyn.}, 3(1):173--198, 2009.

\bibitem[NT]{NT}
Hoang~Thanh Nguyen and Hung~Cong Tran.
\newblock On the coarse geometry of certain 2-dimensional right-angled
  {C}oxeter groups.
\newblock To appear in Algebraic \& Geometric Topology. arXiv:1712.01079.

\bibitem[Pap05]{MR2153400}
Panos Papasoglu.
\newblock Quasi-isometry invariance of group splittings.
\newblock {\em Ann. of Math. (2)}, 161(2):759--830, 2005.

\bibitem[PW02]{MR1898396}
Panos Papasoglu and Kevin Whyte.
\newblock Quasi-isometries between groups with infinitely many ends.
\newblock {\em Comment. Math. Helv.}, 77(1):133--144, 2002.

\bibitem[Rie01]{MR1840770}
Eleanor~G. Rieffel.
\newblock Groups quasi-isometric to {$\mathbf{H^2}\times\mathbf{R}$}.
\newblock {\em J. London Math. Soc. (2)}, 64(1):44--60, 2001.

\bibitem[\'S17]{SJ}
Jacek \'Swi\c{a}tkowski.
\newblock Right-angled {C}oxeter groups with the {$n$}-dimensional
  {S}ierpi\'nski compacta as boundaries.
\newblock {\em J. Topol.}, 10(4):970--994, 2017.

\bibitem[Sch95a]{MR1383215}
Richard~Evan Schwartz.
\newblock The quasi-isometry classification of rank one lattices.
\newblock {\em Inst. Hautes \'{E}tudes Sci. Publ. Math.}, 82:133--168 (1996),
  1995.

\bibitem[Sch95b]{Schwartz95}
Richard~Evan Schwartz.
\newblock The quasi-isometry classification of rank one lattices.
\newblock {\em Inst. Hautes \'Etudes Sci. Publ. Math.}, 82:133--168 (1996),
  1995.

\bibitem[Sis]{Sisto}
Alessandro Sisto.
\newblock On metric relative hyperbolicity.
\newblock Preprint. arXiv:1210.8081.

\bibitem[Tra13]{MR3143594}
Hung~Cong Tran.
\newblock Relations between various boundaries of relatively hyperbolic groups.
\newblock {\em Internat. J. Algebra Comput.}, 23(7):1551--1572, 2013.

\bibitem[Tuk88]{MR961162}
Pekka Tukia.
\newblock Homeomorphic conjugates of {F}uchsian groups.
\newblock {\em J. Reine Angew. Math.}, 391:1--54, 1988.

\bibitem[Why58]{MR0099638}
G.~T. Whyburn.
\newblock Topological characterization of the {S}ierpi\'nski curve.
\newblock {\em Fund. Math.}, 45:320--324, 1958.

\end{thebibliography}
\end{document}